\numberwithin{equation}{section}	
\theoremstyle{plain}
\newtheorem{theorem}{Theorem}[section]      
\newtheorem*{theorem*}{Theorem}
\newtheorem{lemma}[theorem]{Lemma}
\newtheorem{proposition}[theorem]{Proposition}
\newtheorem{corollary}[theorem]{Corollary}
\theoremstyle{definition}
\theoremstyle{remark}
\newtheorem{remark}[theorem]{Remark}
\newtheorem{remarks}[theorem]{Remarks}
\def\leqsl{\leqslant}
\newcommand{\trace}[1]{\ensuremath{\operatorname{tr}{\left( #1 \right)}}}
\newcommand{\tp}[1]{\ensuremath{\hspace{1.8truept}\vphantom{1}^{t}\hspace{-1pt}#1}}
\newcommand{\Mat}[1]{\ensuremath{\operatorname{Mat}_{#1}}}
\newcommand{\Lie}[1]{\ensuremath{\operatorname{Lie}{#1}}}
\newcommand{\Rrank}{\ensuremath{\R\textrm{-}\operatorname{rank}}}
\newcommand{\Ad}{\ensuremath{\operatorname{Ad}}}  
\def\H{\mathscr H}	%
\renewcommand{\Xi}{\ensuremath{\varXi}}
\renewcommand{\Theta}{\ensuremath{\varTheta}}
\newcommand{\R}{\ensuremath{\mathbb R}}
\newcommand{\C}{\ensuremath{\mathbb C}}
\newcommand{\gl}{\ensuremath{\mathfrak {gl}}}
\renewcommand{\o}{\ensuremath{\mathfrak {o}}}
\renewcommand{\sp}{\ensuremath{\mathfrak{sp}}}
\newcommand{\g}{\ensuremath{\mathfrak{g}}}   
\newcommand{\p}{\ensuremath{\mathfrak{p}}}   
\renewcommand{\u}{\ensuremath{\mathfrak{u}}}
\newcommand{\GL}[1]{\ensuremath{\mathrm{GL}_{#1}}}
\newcommand{\rmO}{\ensuremath{\mathrm{O}}}   
\newcommand{\Ostar}{\ensuremath{\mathrm{O}^*}}   
\newcommand{\SO}{\ensuremath{\mathrm{SO}}}   
\newcommand{\U}{\ensuremath{\mathrm{U}}}   
\newcommand{\SU}{\ensuremath{\mathrm{SU}}}   
\newcommand{\Sp}{\ensuremath{\mathrm{Sp}}}   
\DeclareMathOperator{\im}{Im}		
\DeclareMathOperator{\rank}{rank}		
\newcommand{\dd}{\ensuremath{\operatorname{d}\!}}
\newcommand{\pd}{\ensuremath{\partial}}
\def\<{\langle}
\def\>{\rangle}
\def\c2vec#1#2{ %
   \left[ \begin{smallmatrix} %
           #1 \\ #2  \end{smallmatrix} %
   \right]}
\newcommand{\ibar}{\ensuremath{\bar{\imath}}}
\newcommand{\jbar}{\ensuremath{\bar{\jmath}}}
\DeclareMathOperator{\impart}{Im}
\newcommand{\ai}{\ensuremath{\mathrm{i}}\,}
\renewcommand{\H}{\ensuremath{\mathbb H}}
\newcommand{\mAd}{\ensuremath{\mathbf{Ad}}}
\newcommand{\PV}{\ensuremath{\mathscr P(V)}}
\newcommand{\PVprime}{\ensuremath{\mathscr P(V')}}
\newcommand{\PVk}{\ensuremath{\mathscr P(V^k)}}
\newcommand{\PVprimek}{\ensuremath{\mathscr P(V'^k)}}
\newcommand{\PD}{\ensuremath{\mathscr{PD}}}
\title[Moment map and oscillator representation]
{The moment map on symplectic vector space \\ and oscillator representation} 
\author{Takashi Hashimoto}
\thanks{Partly supported by JSPS Grant-in-Aid for Scientific Research (C) No.~23540203 and No.~26400014.}
\address{
  University Education Center, 
  Tottori University, 
  4-101, Koyama-Minami, Tottori, 680-8550, Japan
}
\email{thashi@uec.tottori-u.ac.jp}
\date{\today}
\keywords{%
    symplectic vector space, moment map, canonical quantization, oscillator representation, Howe duality
}
\subjclass[2010]{Primary: 22E46, 17B20; Secondary: 81S10}
\begin{document}

\begin{abstract}
Let $G$ denote $\Sp(n,\R), \U(p,q)$ or $\rmO^*(2n)$. 
The main aim of this paper is to show that the canonical quantization of the moment map on a symplectic $G$-vector space $(W,\omega)$ 
naturally gives rise to the oscillator (or Segal-Shale-Weil) representation of $\g := \Lie(G) \otimes \C$.
More precisely, after taking a complex Lagrangian subspace $V$ of the complexification of $W$, 
we assign an element of the Weyl algebra for $V$ to $\left< \mu, X \right>$ for each $X \in \g$, which we denote by $\left< \widehat{\mu}, X \right>$.
Then we show that the map $X \mapsto \ai \left<\widehat{\mu}, X \right>$ gives a representation of $\g$. 
With a suitable choice of $V$ in each case, the representation coincides with the oscillator representation of $\g$. 
\end{abstract}

\maketitle

\section{Introduction}

Let $(W,\omega)$ be a symplectic vector space and $\Sp(W)$ the group of linear symplectic isomorphisms of $W$. 
Then it is well known that each component of the moment map, 
i.e.,~the Hamiltonian function $H_X$ on $W$, is quadratic in the coordinate functions for any $X \in \sp(W)$ (see e.g.~\cite{CG97}).
Therefore, taking account of the fact that the commutators among the quantized operators corresponding to the coordinate functions are central,
one can see that the canonical quantization gives a representation of $\sp(W)$
since the map $X \mapsto H_X$ is a Lie algebra homomorphism from $\sp(W)$ into $C^\infty(W)$,
where the latter is regarded as a Lie algebra of infinite dimension with respect to the Poisson bracket.

The main aim of this paper is to show that for real reductive Lie groups $G=\Sp(n,\R), \U(p,q)$ and $\Ostar(2n)$,
the canonical quantization of the moment map on real symplectic $G$-vector space $(W,\omega)$ 
gives rise to the oscillator (or Segal-Shale-Weil) representation of the complexified Lie algebra $\g$ of $\g_0:=\Lie(G)$ in a natural way.
Here, we understand that the canonical quantization is to construct a mapping from the space of smooth functions on $W$ 
into the ring of polynomial coefficient differential operators on a complex Lagrangian subspace $V$ of the complexification $W_\C$ of $W$,
the so-called Weyl algebra for $V$, that induces a Lie algebra homomorphism from $\g$ into the Weyl algebra.
%
%
We remark that a different choice of a Lagrangian subspace results in a different quantization, 
and hence a different representation of the Lie algebra.
In fact, when $G=\U(p,q)$ and $\rmO^*(2n)$, 
we will find in \S\S 3-5 that one choice of a Lagrangian subspace produces finite-dimensional irreducible representations of $\g$,
while another produces infinite-dimensional ones (i.e.~the oscillator representations).

The oscillator representations have been extensively studied in relation to the Howe duality and to the minimal representations. 
Note that each Lie group $G$ we consider in this paper is a counterpart of the Howe's reductive dual pair $(G,G')$ with $G'$ compact,
i.e., $G$ and $G'$ are centralizer of each other in a symplectic group $\Sp(N,\R)$ for some $N$.
One can obtain the oscillator representations by embedding $G$ into $\Sp(N,\R)$ for $G=\U(p,q)$ and $\rmO^*(2n)$ 
(see \cite{KV78,EHW83,Howe_remarks,Howe_transcending,Howe85,DES91} etc.).

As for another approach to a construction of the oscillator representations,
we should mention \cite{HKMO12}, in which they construct the oscillator representations via Jordan algebras 
when $G$ is an arbitrary Hermitian Lie group of tube type.

It was shown in \cite{KinvDO} that for the classical Hermitian symmetric pairs 
$(G,K)=(\SU(p,q),\mathrm{S}(\U(p) \times \U(q)))$, $(\Sp(n,\R),\U(n))$, and $(\SO^*(2n),\U(n))$,
one obtains generating functions of the principal symbols of $K_\C$-invariant differential operators on $G/K$ 
in terms of determinant or Pfaffian of a certain $\g$-valued matrix whose entries are the total symbols of the differential operators 
corresponding to the holomorphic discrete series representations realized via Borel-Weil theory,
where $K_\C$ denotes a complexification of $K$.
We note that the $K_\C$-invariant differential operators play a prominent r{\^o}le in the Capelli identity (see \cite{HU91}).
Moreover, we also clarified in \cite{KinvDO} that the $\g$-valued matrix mentioned above can be regarded as the twisted moment map $\mu_\lambda$ 
on the cotangent bundle of $G/K$ which reduces to the moment map $\mu$ on the cotangent bundle when $\lambda \to 0$,
where $\lambda$ is an element of $\g^*$, the dual space of $\g$, that parametrizes the representations.
In summary, one can say that the moment map relates non-commutative objects (representation operators which are realized as differential operators) 
to commutative ones (symbols of the differential operators).
Now in this paper, we will proceed in the reverse direction: from commutative objects to non-commutative ones.

In the remainder of this section, we review a few relevant notions from symplectic geometry briefly, and state our main result.

Let $(M,\omega)$ be a real symplectic manifold.
For $f \in C^{\infty}(M)$, the space of smooth \R-valued functions on $M$, 
let $\xi_f$ denote the vector field on $M$ satisfying $\iota(\xi_f) \omega = \dd f$,
where $\iota$ stands for the contraction.
Then we define the Poisson bracket by 
\begin{equation}
\label{e:Poisson Bracket}
 \{ f, g \}:=\omega(\xi_g,\xi_f)   \qquad   (f,g \in C^{\infty}(M)),
\end{equation}
which we extend to the space of smooth $\C$-valued functions by linearity.
%
%
If we denote the quantum observable corresponding to a classical observable $f \in C^{\infty}(M)$ by $\widehat f$,
then the quantization principles require in particular that
\begin{equation}
\label{e:quantization principle}
 \text{if} \quad \{f_1,f_2\}=f_3 \quad \text{then} \quad [\widehat f_1, \widehat f_2]= -\ai \hbar \widehat f_3,
\end{equation}
where $\hbar$ is the Planck constant (see e.g.~\cite{BW97,Woodhouse91}); 
we set $\hbar=1$ for simplicity in what follows.

%
%
Suppose that a  Lie group $G$ acts on $M$ symplectically, i.e., $g^* \omega=\omega$ for all $g \in G$.
A smooth map $\mu: M \to \g_0^*$ is called the moment map if the following conditions hold:
$\mu$ is $G$-equivariant, and it satisfies
\begin{equation}
\label{e:definition of moment map}
 \dd \, \< \mu, X \> =\iota(X_M) \omega \qquad \text{for all} \; X \in \g_0, 
\end{equation}
where $\g_0^*$ is the dual space of $\g_0$ and $X_M$ denotes the vector field on $M$ defined by
\begin{equation}
\label{e:vector field}
 X_M(p) = \left. \frac{\dd}{\dd t} \right|_{t=0} \exp(-t X).p    \qquad (p \in M).
\end{equation}
%
%
We often identify $\g_0^*$ with $\g_0$ via the nondegenerate symmetric invariant bilinear form $B$ defined by 
\begin{equation}
\label{e:trace form}
 B(X,Y) = \begin{cases}
	    \frac12 \trace{X Y}  & \text{if} \quad \g_0=\sp(n,\R) \text{ or } \mathfrak{o}^*(2n); \\
	    \trace{X Y}          & \text{if} \quad \g_0=\u(p,q),
    	  \end{cases}
\end{equation}
which extends to the one on $\g=\sp_n$, $\mathfrak{o}_{2n}$, or $\gl_{p+q}$, the complexification of $\g_0=\sp(n,\R)$, $\mathfrak{o}^*(2n)$ or $\u(p,q)$.
If there is no risk of confusion, 
we denote the composition of $\mu$ and the isomorphism $\g_0^* \simeq \g_0$ also by $\mu$. 
Our symplectic $G$-manifold $(M,\omega)$ will be a real symplectic $G$-vector space.

%
%
The main result of this paper is the following, which we prove case by case:
\begin{theorem*}
Let $G = \Sp(n,\R), \U(p,q)$ and $\rmO^*(2n)$, 
and let $(W,\omega)$ be the real symplectic $G$-vector spaces $W=\R^{2n}, (\C^{p+q})_\R$ and $(\C^{2n})_\R$ equipped with $\omega$ given by
\[
 \omega(v,w) =\begin{cases}
                \tp{v} J_n w  \quad &\text{if} \quad W=\R^{2n}, \\
                \im( v^* I_{p,q} w )  \quad &\text{if} \quad W=(\C^{p+q})_\R, \\
                \im( v^* I_{n,n} w )  \quad &\text{if} \quad W=(\C^{2n})_\R           
              \end{cases}  
\]
for $v, w \in W$, respectively,
where $J_n=\left[ \begin{smallmatrix}  & 1_n \\[3pt] -1_n &  \end{smallmatrix} \right]$
and $I_{p,q}=\left[\begin{smallmatrix} 1_p & \\[2pt] & -1_q \end{smallmatrix}\right]$.
Then, with certain choice of complex Lagrangian subspace of the complexification $W_\C$ of $W$,
the canonical quantization of the moment map $\mu: W \to \g_0^*$ yields the oscillator representations.
\end{theorem*}

The rest of this paper is organized as follows.
In \S 2, we consider the case where $G=\Sp(n,\R)$, which is the most fundamental in this paper 
in the sense that a choice of a complex Lagrangian subspace is the key to obtain the oscillator representation. 
The original motivation of this project has started from this case with $n=1$.
In \S 3, we turn to the case where $G=\U(p,q)$ and show that 
the canonical quantization of the moment map with a certain choice of a complex Lagrangian subspace yields irreducible finite-dimensional representations of $\gl_{p+q}$,
and postpone showing that another choice leads to the oscillator representations of $\gl_{p+q}$ until \S 5.
In \S 4, we treat the case $G=\rmO^*(2n)$, in which the moment map can be expressed in two ways
due to the fact that the quaternionic vector space $\H^n$ is $\C$-isomorphic to $\C^{2n}$ and to $\Mat{n \times 2}(\C)$.
In \S 5, we take complex Lagrangian subspaces different from the ones considered in \S\S 3 and 4 in the cases of $\U(p,q)$ and $\rmO^*(2n)$:
one leading to finite-dimensional irreducible representations when $\g=\mathfrak{o}_{2n}$, and one leading to the oscillator representation when $\g=\gl_{p+q}$.
Finally, we remark a relation between the moment map and the associated variety of the corresponding irreducible $\g$-modules
occurring in the irreducible decomposition of the space of polynomials on the Lagrangian subspace under the joint action of the dual pairs $(\g,G')$.

\vspace{10pt}

%
%
\noindent
\textbf{Notation: }(i)
Throughout the paper, we fix a Cartan involution $\theta$ to be given by $\theta X = - X^*$.
Let $\g_0=\mathfrak k_0 \oplus \p_0$ denote the Cartan decomposition for $\g_0$,
and $\g=\mathfrak k \oplus \p$ the corresponding complexified Cartan decomposition for $\g=\g_0 \otimes \C$.

For a given basis $\{ X_\alpha\}_{\alpha}$ for $\g_0$ (resp. $\g$),
let us denote by $\{ X_\alpha^\vee \}$ its dual basis with respect to $B$, i.e., the basis for $\g_0$ (resp. $\g$) satisfying
\[
 B(X_\alpha, X_\beta^\vee) = \delta_{\alpha,\beta},
\]
where $\delta_{\alpha,\beta}$ is the Kronecker's delta, i.e., is equal to $1$ if $\alpha=\beta$ and $0$ otherwise.

\noindent
(ii)
For a positive integer $i$,
we set
\[
 \ibar := \begin{cases}
            n+i  & \text{if} \quad \g=\sp_n \text{ or } \mathfrak{o}_{2n}; \\
	    p+i  & \text{if} \quad \g=\gl_{p+q}, \\
	  \end{cases}
\]
where $\sp_n, \mathfrak{o}_{2n}$ and $\gl_{p+q}$ denote the complexified Lie algebras of $\sp(n,\R), \mathfrak{o}^*(2n)$ and $\u(p,q)$ respectively.

\section{Reductive dual pair $(\sp(n,\R),\rmO_k)$}

In this section, let $G$ denote the symplectic group $\Sp(n,\R)$ of rank $n$ over $\R$ which we realize as
\[
 \Sp(n,\R) = \{ g \in \GL{2n}(\R); \tp{g} J_n g = J_n \}
\]
with $J_n=\left[ \begin{smallmatrix}  & 1_n \\[3pt] -1_n &  \end{smallmatrix} \right]$.
Set $\g_0=\sp(n,\R)$, the Lie algebra of $G$, and take a basis for $\g_0$ as follows:
\begin{equation}
\label{e:basis for sp}
\begin{aligned}
 X_{i,j}^0 &= E_{i,j}-E_{\jbar,\ibar}       &  &(1 \leqsl i,j \leqsl n), \\
 X_{i,j}^+ &= E_{i,\jbar}+E_{j,\ibar}   &  &(1 \leqsl i \leqsl j \leqsl n),    \\
 X_{i,j}^- &= E_{\ibar,j}+E_{\jbar, i}  &  &(1 \leqsl i \leqsl j \leqsl n), 
\end{aligned}
\end{equation}
where $E_{i,j}$ denotes the matrix unit of size $2 n \times 2 n$, i.e., its $(i,j)$-th entry is $1$ and all other entries are $0$.
Note that they also form a basis for $\g=\sp_n$.

\subsection{}

Let $W=\R^{2 n}$ which is equipped with the canonical symplectic form $\omega$ given by
\begin{equation}
 \omega(v,w) = \tp{v} J_n w \quad (v,w \in W).
\end{equation}
Obviously, 
the natural left action of $G$ on $W$ defined by $v \mapsto g v$ (matrix multiplication) for $v \in W$ and $g \in G$ is symplectic, 
i.e., $g^* \omega=\omega$ for all $g \in G$.
If we identify the canonical base vectors $e_i:=\tp{(0,\dots,0,\overset{i\text{-th}}{1},0,\dots,0)}$ 
with $\pd_{x_i}$ for $i=1,2,\dots,n$ and with $\pd_{y_{i-n}}$ for $i=\bar 1,\bar 2, \dots, \bar n$,
then it is written as 
\begin{equation}
\label{e:symplectic form1}
 \omega =\sum_{i=1}^n \dd x_{i} \wedge \dd y_{i}
\end{equation}
at $v=\tp{(x_1,\dots,x_n, y_1,\dots,y_n)} \in W$.
%
%
\begin{lemma}
\label{l:VF sp}
The vector fields on $W $generated by the basis \eqref{e:basis for sp} for $\g_0=\sp(n,\R)$ in the sense of \eqref{e:vector field} are given by
\begin{equation}
\begin{aligned}
 (X_{i,j}^0)_W &=  -x_{j} \pd_{x_{i}} + y_{i} \pd_{y_{j}}  &       &(1 \leqsl i,j \leqsl n), \\
 (X_{i,j}^+)_W &=  - ( y_{j} \pd_{x_{i}} + y_{i} \pd_{x_{j}} ) &   &(1 \leqsl i \leqsl j \leqsl n),    \\
 (X_{i,j}^-)_W &=  - ( x_{j} \pd_{y_{i}} + x_{i} \pd_{y_{j}} ) &   &(1 \leqsl i \leqsl j \leqsl n).
\end{aligned}
\end{equation}
\end{lemma}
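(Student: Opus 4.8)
The plan is to exploit the fact that $G$ acts on $W$ by matrix multiplication, so that the one-parameter group generated by $X \in \g_0$ is linear: for $v \in W$ one has $\exp(-tX).v = \exp(-tX)\,v$, and hence by the definition \eqref{e:vector field}
\[
 X_W(v) = \left.\frac{\dd}{\dd t}\right|_{t=0} \exp(-tX)\,v = - X v .
\]
Thus $X_W$ is nothing but the linear vector field on $W \cong \R^{2n}$ attached to the matrix $-X$, and the entire computation reduces to rewriting $-Xv$ in the coordinates $(x_1,\dots,x_n,y_1,\dots,y_n)$ fixed in \eqref{e:symplectic form1}.

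The second step is to record how a single matrix unit contributes. Writing $z_m := x_m$ for $1 \leqsl m \leqsl n$ and $z_{\ibar} := y_i$, the identification $e_i \leftrightarrow \pd_{x_i}$, $e_{\ibar} \leftrightarrow \pd_{y_i}$ turns the linear map $v \mapsto E_{k,l} v = z_l\, e_k$ into the vector field $z_l \pd_{z_k}$, so that $(E_{k,l})_W = - z_l \pd_{z_k}$. It then only remains to substitute the three families in \eqref{e:basis for sp}: from $X_{i,j}^0 = E_{i,j} - E_{\jbar,\ibar}$ one gets $-(z_j \pd_{z_i} - z_{\ibar}\pd_{z_{\jbar}}) = -x_j \pd_{x_i} + y_i \pd_{y_j}$; from $X_{i,j}^+ = E_{i,\jbar} + E_{j,\ibar}$ one gets $-(z_{\jbar}\pd_{z_i} + z_{\ibar}\pd_{z_j}) = -(y_j \pd_{x_i} + y_i \pd_{x_j})$; and from $X_{i,j}^- = E_{\ibar,j} + E_{\jbar,i}$ one gets $-(z_j \pd_{z_{\ibar}} + z_i \pd_{z_{\jbar}}) = -(x_j \pd_{y_i} + x_i \pd_{y_j})$, which are exactly the asserted formulas.

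There is no genuine obstacle in this lemma; the only points demanding care are the overall sign coming from the $\exp(-tX)$ in the convention \eqref{e:vector field} and the bookkeeping of the barred indices $\ibar = n+i$, which is precisely why I would isolate the elementary identity $(E_{k,l})_W = -z_l\pd_{z_k}$ before plugging in the basis elements. As a consistency check one could instead verify the statement through the defining relation $\iota(X_W)\omega = \dd\langle\mu,X\rangle$ once the moment map is available, but the direct evaluation of $-Xv$ is clearly the most economical route.
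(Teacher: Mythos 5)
Your proof is correct and fills in exactly the computation that the paper dismisses as ``an easy exercise.'' The key observations—that a linear $G$-action gives $X_W(v) = -Xv$ via the convention $\exp(-tX)\cdot v$ in \eqref{e:vector field}, and that consequently $(E_{k,l})_W = -z_l\partial_{z_k}$ under the identification of $e_k$ with $\partial_{x_k}$ or $\partial_{y_{k-n}}$—are the right way to organize the bookkeeping, and the substitutions into the three families $X^0_{i,j}$, $X^+_{i,j}$, $X^-_{i,j}$ match the asserted formulas sign for sign. This is the same direct-evaluation route the paper has in mind, so there is nothing further to compare.
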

\begin{proof}
It is an easy exercise to show these formulae.
\end{proof}
Note that the orthogonal group $\rmO(1)=\{\pm 1\}$ also acts on $W$ symplectically on the right. 
%
%
\begin{proposition}
\label{p:moment map Sp}
Let $(W,\omega)$ be as above and $G=\Sp(n,\R)$.
Then the moment map $\mu:W \to \g_0^* \simeq \g_0$ is given by
\begin{equation}
\label{e:moment map1 closed}
\mu(v) = v \tp{v} J_n
       = \begin{bmatrix} - x \tp y & x \tp x \\  -y \tp y & y \tp x   \end{bmatrix}
\end{equation}
for $v= \tp{ (x_1, \dots, x_n, y_1, \dots, y_n )} \in W$ with $x=\tp{(x_1,\dots,x_n)}$ and $y=\tp{(y_1,\dots,y_n)}$.
In particular, $\mu$ is $G$-equivariant and is $\rmO(1)$-invariant.
\end{proposition}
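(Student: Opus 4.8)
The plan is to verify the two defining properties of the moment map directly from the formulas in Lemma~\ref{l:VF sp}. First I would set up the identification $\g_0^* \simeq \g_0$ via the bilinear form $B(X,Y) = \tfrac12\trace{XY}$ of \eqref{e:trace form}, so that proving $\mu(v) = v\,\tp{v}\,J_n$ amounts to checking that $\langle \mu(v), X\rangle = \tfrac12\trace{\mu(v)\,X}$ satisfies $\dd\,\langle\mu,X\rangle = \iota(X_W)\omega$ for each basis element $X$ of \eqref{e:basis for sp}. Concretely, I would compute $H_X(v) := \tfrac12\trace{v\,\tp{v}\,J_n\,X} = \tfrac12\,\tp{v}\,J_n\,X\,v$ for $X = X_{i,j}^0, X_{i,j}^{\pm}$, obtaining the quadratic Hamiltonians $H_{X_{i,j}^0} = -x_i y_j$ (or a symmetrized variant), $H_{X_{i,j}^+} = \tfrac12(\text{something in } y_i y_j)$, and $H_{X_{i,j}^-}$ in the $x_i x_j$; the precise constants come out of the explicit shape of $J_n$ and the matrix units.

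Next, using $\omega = \sum_i \dd x_i \wedge \dd y_i$ from \eqref{e:symplectic form1}, I would compute $\dd H_X$ and check that $\iota(Y)\omega = \dd H_X$ holds precisely when $Y$ is the vector field $(X)_W$ listed in Lemma~\ref{l:VF sp}. For instance, for a function $f$, the Hamiltonian vector field determined by $\iota(\xi)\omega = \dd f$ is $\xi = \sum_i(\partial_{y_i} f\,\partial_{x_i} - \partial_{x_i} f\,\partial_{y_i})$, up to the sign convention fixed by \eqref{e:Poisson Bracket}; matching this against $(X_{i,j}^0)_W = -x_j\partial_{x_i} + y_i\partial_{y_j}$ etc.\ pins down the Hamiltonians and confirms \eqref{e:definition of moment map}. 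Equivariance of $\mu$ then follows from the transformation law $\mu(gv) = (gv)\,\tp{(gv)}\,J_n = g\,(v\,\tp{v}\,J_n)\,\tp{g}\,J_n = g\,\mu(v)\,g^{-1}$, where the last equality uses $\tp{g}\,J_n\,g = J_n$, i.e.\ $\tp{g}\,J_n = J_n\,g^{-1}$; this is exactly the coadjoint action under the identification $\g_0^*\simeq\g_0$. Similarly, $\rmO(1)$-invariance is immediate since replacing $v$ by $-v$ leaves $v\,\tp{v}$ unchanged.

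The only genuinely delicate point is bookkeeping of signs and factors of $\tfrac12$: the conventions in \eqref{e:Poisson Bracket}, \eqref{e:definition of moment map}, and \eqref{e:vector field} (note the minus sign in $\exp(-tX).p$) must all be threaded through consistently, and the trace form carries its own $\tfrac12$. I expect this to be the main obstacle — not conceptually hard, but easy to get wrong. To keep it clean I would do the verification for one representative generator of each of the three families $X^0, X^+, X^-$ in full, then note that the remaining cases are identical by symmetry, and finally observe that since the three families span $\g_0$ and both sides of \eqref{e:definition of moment map} are linear in $X$, the identity for all $X \in \g_0$ follows. The closed form $\mu(v) = v\,\tp{v}\,J_n$ and its block decomposition in \eqref{e:moment map1 closed} are then just a repackaging of these per-generator computations.
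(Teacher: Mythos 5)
Your proposal is correct and uses essentially the same ingredients as the paper: Lemma~\ref{l:VF sp} for the vector fields, the explicit form of $\omega$ from \eqref{e:symplectic form1}, the identification $\g_0^*\simeq\g_0$ via $B=\tfrac12\trace{}$, and the same equivariance argument from $\tp{g}J_n=J_n g^{-1}$. The only difference is organizational: the paper first reads off $\langle\mu,X_{i,j}^{\star}\rangle$ from $\iota((X_{i,j}^{\star})_W)\omega$ and then \emph{reconstructs} $\mu(v)=v\,\tp v\,J_n$ as a sum $\sum \langle\mu,X_\alpha\rangle X_\alpha^\vee$ over the dual basis, whereas you start from the claimed closed form, expand $H_X=\tfrac12\trace{v\,\tp v\,J_n X}$, and \emph{verify} $\dd H_X=\iota(X_W)\omega$ for each basis vector. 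Both routes perform the same per-generator computations and are equally rigorous; the paper's derivation additionally exhibits how the block matrix arises, which is slightly more illuminating, but your verification direction works just as well and sidesteps the dual-basis bookkeeping (the $2^{-\delta_{ij}}$ factors in $(X_{i,j}^{\pm})^\vee$) by trading it for tracking the factor $\tfrac12$ in the trace form — a fair exchange.
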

\begin{proof}
In order to make this paper self-contained, we include the proof (see, however, e.g. \cite[Proposition 1.4.6]{CG97}).
It follows from Lemma \ref{l:VF sp} that
\begin{align*}
 \dd \, \< \mu, X_{i,j}^0 \> 
  &= \iota( (X_{i,j}^0)_W ) \omega \\
  &= \iota(-x_j \pd_{x_i}+y_i \pd_{y_j}) \sum_{k=1}^n \dd x_k \wedge \dd y_k \\
  &= -x_j \dd y_i - y_i \dd x_j = - \dd \, ( y_i x_j ).
\end{align*}
Hence one obtains that
\[
 \< \mu, X_{i,j}^0 \> = - y_i x_j.
\]
Similar calculations yield
\begin{equation*}
 \< \mu, X_{i,j}^+ \> = - y_{i} y_{j}   \quad \text{and} \quad
 \< \mu, X_{i,j}^- \> =  x_{i} x_{j}.
\end{equation*}
Therefore,
\allowdisplaybreaks{
\begin{align*}
 \mu(v)  &=\sum_{i,j} \<\mu, X_{i,j}^0 \> (X_{i,j}^0)^\vee 
          + \sum_{i \leqsl j} \< \mu, X_{i,j}^+ \> (X_{i,j}^+)^\vee 
          + \sum_{i \leqsl j} \< \mu, X_{i,j}^- \> (X_{i,j}^-)^\vee  \\
         &=\sum_{i,j} (-y_i x_j) (E_{j,i}-E_{\ibar,\jbar}) 
           + \sum_{i \leqsl j} (-y_i y_j) 2^{-\delta_{i j}} (E_{i,\jbar}+E_{j,\ibar}) 
           + \sum_{i \leqsl j } x_i x_j 2^{-\delta_{i j}} (E_{\ibar,j}+E_{\jbar,i})  \\
         &=\sum_{i,j} \left( -x_i y_j E_{i,j} + x_i x_j E_{i,\jbar} -y_i y_j E_{\ibar,j} + y_i x_j E_{\ibar,\jbar} \right) \\
         &= \begin{bmatrix}
	      -x \tp y & x \tp x \\ -y \tp y & y \tp x 
	    \end{bmatrix}
           = v \tp v J_n
\end{align*}
}%
for $v=\tp{(x_1,\dots,y_n)}$ with $x=\tp{(x_1,\dots,x_n)}$ and $y=\tp{(y_1,\dots,y_n)}$.

Now the $\rmO(1)$-invariance of $\mu$ is trivial, and the $G$-equivariance can be verified as follows:
\[
 \mu(g v) = g v \tp{(g v)} J_n = g v \tp v \tp g J_n = g v \tp v J_n g^{-1} =\Ad(g) \mu(v)
\]
since $\tp g J_n = J_n g^{-1}$ for $g \in G$.
This completes the proof.
\end{proof}

It follows from the definitions of the Poisson bracket \eqref{e:Poisson Bracket} 
and the symplectic form \eqref{e:symplectic form1} that
\begin{equation}
\label{e:Poisson bracket}
 \{ x_{i}, y_{j} \} = - \delta_{i,j}, \quad
 \{ x_i, x_j \} = \{y_i, y_j\}=0
\end{equation}
for $i,j=1,\dots,n$.
In view of \eqref{e:Poisson bracket}, we quantize the classical observables by assigning
\begin{equation}
\label{e:canonical quantization1}
 \widehat x_{i} = \mathrm{multiplication \; by \;}x_{i}, \quad
 \widehat y_{i} = - \ai \pd_{x_{i}},
\end{equation}
so that \( [\widehat x_i, \widehat y_j] = \ai  \delta_{i,j} \), as required.
In what follows, we simply denote the multiplication operator by a function $f$
by the same letter $f$ if there is no risk of confusion.

Note that the quantization \eqref{e:canonical quantization1} corresponds to taking a Lagrangian subspace of $W$ spanned by $e_1,\dots,e_n$.
However, in order to obtain a representation of the complex Lie algebra $\g=\sp_n$,
we will take a \textit{complex} Lagrangian subspace of the complexification $W_\C$ defined by
\begin{equation}
\label{e:Lagrangian1}
 V:=\<e_1,\dots,e_n\>_\C. 
\end{equation}
Therefore, the classical observables $x_j$, $j=1,\dots,n$, are now the complex coordinates on $V$ with respect to this basis.

Now, we quantize the moment map $\mu$ according to \eqref{e:canonical quantization1} and denote the quantized moment map by $\widehat \mu$ as follows:
\allowdisplaybreaks{
\begin{align}
  \widehat \mu :&= \begin{bmatrix} \, \widehat x_1 \\ \vdots \\ \, \widehat y_n \end{bmatrix}
                 \left( \widehat x_1, \dots, \widehat y_n  \right) J_n
                = \begin{bmatrix}  x  \\[3pt]  - \ai \pd_x  \end{bmatrix}
                  \left( \tp{x}, - \ai \tp{\pd_x}  \right) J_n 
          \notag
     \\
               &= \begin{bmatrix}
                  \,\ai x \tp \pd_x   & x \tp x  \\
		  \,\pd_x \tp{\pd_x}  & -\ai \pd_x \tp{x} 
                  \end{bmatrix}
          \label{e:quantized moment map1} 
\end{align}
}%
with $x=\tp{(x_1,\dots,x_n)}$ and $\pd_x=\tp{(\pd_{x_1},\dots,\pd_{x_n})}$.

Let $\PV$ denote the space of complex coefficient polynomial functions on $V$,
i.e., $\PV=\C[x_1,\dots,x_n]$,
and $\PD(V)$ the ring of polynomial coefficient differential operators on $V$.
Thus each entry of $\widehat \mu$ is an element of $\PD(V)$.

%
%
\begin{theorem}
\label{t:pi is a hom sp}
For $X \in \g=\sp_n$, set $\pi(X) = \ai \< \,\widehat \mu,  X \>$.
Then the map
\[
 \pi: \g \to \PD(V)
\] 
is a Lie algebra homomorphism.
In terms of the basis \eqref{e:basis for sp}, it is given by 
\begin{equation}
\label{e:explicit form of pi(X)}
  \pi(X) = \begin{cases}
	     -\frac12  ( x_i \pd_{x_j} + \pd_{x_j} x_i)  &\text{if} \quad X=X_{i,j}^0 ; \\[3pt]
             \; \ai  \pd_{x_i} \pd_{x_j}  &\text{if} \quad X=X_{i,j}^+ ; \\[3pt]
             \; \ai  x_{i} x_{j}   &\text{if} \quad X=X_{i,j}^-.
           \end{cases}
\end{equation}
\end{theorem}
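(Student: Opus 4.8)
The plan is to reduce the theorem to two independent verifications: first, that the explicit formulas \eqref{e:explicit form of pi(X)} for $\pi(X)$ on the basis elements do follow from the definition $\pi(X)=\ai\langle\widehat\mu,X\rangle$ together with the expression \eqref{e:quantized moment map1} for $\widehat\mu$; and second, that the resulting assignment $X\mapsto\pi(X)$ respects brackets. For the first part I would simply pair $\widehat\mu$, viewed as the $\g$-valued matrix in \eqref{e:quantized moment map1}, against each of $X_{i,j}^0$, $X_{i,j}^\pm$ using the bilinear form $B$ of \eqref{e:trace form}, exactly as in the proof of Proposition \ref{p:moment map Sp} but now keeping track of operator ordering. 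For instance, $\langle\widehat\mu,X_{i,j}^0\rangle$ picks out (a symmetrization of) the $(i,j)$ block entry $\ai\, x\,\tp\pd_x$, giving $\ai\cdot\frac{\ai}{?}$-type bookkeeping that lands on $-\tfrac12(x_i\pd_{x_j}+\pd_{x_j}x_i)$; the $X^\pm$ cases pick out the off-diagonal blocks $x\tp x$ and $\pd_x\tp\pd_x$ and are immediate since those entries commute internally. This is the routine computation the author presumably does, and I would present it compactly rather than grinding all three cases.

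The substantive part is checking that $\pi$ is a homomorphism, i.e. $[\pi(X),\pi(Y)]=\pi([X,Y])$ for all $X,Y$ in the basis. Here the conceptual mechanism is the one sketched in the introduction: the Hamiltonian functions $H_X=\langle\mu,X\rangle$ satisfy $\{H_X,H_Y\}=H_{[X,Y]}$ (the moment map is a Lie algebra comomentum, which follows from $G$-equivariance in Proposition \ref{p:moment map Sp}), and each $H_X$ is a quadratic polynomial in the coordinates $x_i,y_j$; under the quantization \eqref{e:canonical quantization1} the commutator of two quantized quadratics reproduces the quantized Poisson bracket \emph{provided} the correction terms coming from reordering cancel. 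The clean way to organize this is to invoke the standard fact that on quadratic elements of the Weyl algebra the symbol map intertwines $\ai^{-1}[\,\cdot\,,\cdot\,]$ with $\{\cdot,\cdot\}$ up to terms of degree $\le 0$, and then observe that for the specific quadratics appearing here (which are homogeneous of degree exactly $2$ in $(x,\pd_x)$ jointly) the lower-order ambiguity is forced to vanish because $[\pi(X),\pi(Y)]$ and $\pi([X,Y])$ are both again homogeneous of degree $2$. Concretely I would split $\g=\sp_n$ into the $\gl_n$ part (the $X^0$'s), the abelian $\p^+$ part (the $X^+$'s, all sent to the commuting operators $\ai\pd_{x_i}\pd_{x_j}$), and the abelian $\p^-$ part (the $X^-$'s, sent to $\ai x_ix_j$), and check brackets block by block.

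Carrying that out: the $[\p^+,\p^+]=0=[\p^-,\p^-]$ relations are trivial on the quantum side since $\ai\pd_{x_i}\pd_{x_j}$ and $\ai x_ix_j$ each commute among themselves. The $[\mathfrak k,\mathfrak k]\subseteq\mathfrak k$ relations amount to the statement that $X_{i,j}^0\mapsto -\tfrac12(x_i\pd_{x_j}+\pd_{x_j}x_i)=-(x_i\pd_{x_j}+\tfrac12\delta_{ij})$ is, up to the scalar shift, the standard action of $\gl_n$ on $\C[x_1,\dots,x_n]$, and the shift is central so it drops out of commutators; this is a short direct check. The genuinely informative computations are $[\pi(X^0),\pi(X^\pm)]$, which must reproduce the $\gl_n$-action on $\p^\pm$, and above all $[\pi(X^+),\pi(X^-)]$, which must land back in $\pi(\mathfrak k)$ — this is where the ordering-corrected term $-\tfrac12\delta_{ij}$ in $\pi(X^0)$ is produced, since $[\ai\pd_{x_i}\pd_{x_j},\ai x_k x_l]$ generates exactly such first-order-plus-constant operators. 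I expect this last bracket to be the only place where any care is needed, and the main (mild) obstacle is organizing the index bookkeeping in $[\pi(X_{i,j}^+),\pi(X_{k,l}^-)]$ so that it visibly matches $\pi([X_{i,j}^+,X_{k,l}^-])$ expanded in the basis \eqref{e:basis for sp}; everything else is either trivial or a restatement of the comomentum property of $\mu$. I would therefore spend the bulk of the written proof on the $[\p^+,\p^-]$ case and dispatch the rest by the homogeneity/symbol remark above.
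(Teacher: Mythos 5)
Your plan is correct in outline but takes a genuinely different, more computational route than the paper, and it contains one imprecision worth flagging. The paper's proof avoids case-splitting entirely: it invokes the comomentum identity $\{H_X,H_Y\}=H_{[X,Y]}$, then observes that because both the Poisson bracket and the commutator are biderivations, because each $H_X$ is quadratic, and because the basic commutators $[\widehat x_i,\widehat y_j]$ lie in the center of $\PD(V)$, the quantization automatically satisfies $[\widehat H_X,\widehat H_Y]=-\ai\,\widehat H_{[X,Y]}$ uniformly in $X,Y\in\g_0$, after which one complexifies. You instead propose to split $\g$ into $\mathfrak k\oplus\p^+\oplus\p^-$ and verify brackets block by block, appealing to a symbol-level argument for the easy blocks and doing the $[\p^+,\p^-]$ bracket by hand. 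That works, but your justification for the easy blocks is slightly off: you claim $\pi(X)$ is ``homogeneous of degree exactly $2$'' so that ``the lower-order ambiguity is forced to vanish,'' yet $\pi(X_{i,j}^0)=-\bigl(x_i\pd_{x_j}+\tfrac12\delta_{ij}\bigr)$ is \emph{not} homogeneous of degree $2$ --- it has a genuine constant part --- and in the $[\p^+,\p^-]$ case the lower-order terms do \emph{not} vanish (they are exactly what produces that $-\tfrac12\delta_{ij}$, as you yourself note two sentences later). So the correct statement is not that homogeneity kills the corrections a priori, but rather that (i) the constant in $\pi(\mathfrak k)$ is central and so drops out of every commutator, and (ii) for the $[\mathfrak k,\mathfrak k]$, $[\mathfrak k,\p^{\pm}]$, $[\p^{\pm},\p^{\pm}]$ brackets a short check shows no lower-order terms appear on either side, whereas for $[\p^+,\p^-]$ the lower-order terms on both sides must be matched explicitly. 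With that correction your case-by-case scheme gives a valid proof; it is more hands-on than the paper's, which buys concreteness (you see exactly where the $-\tfrac12\delta_{ij}$ constant comes from) at the cost of the uniformity that makes the paper's argument carry over verbatim to the $\U(p,q)$ and $\rmO^*(2n)$ cases in later sections.
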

\begin{proof}
Of course, one can verify the commutation relations among the explicit form \eqref{e:explicit form of pi(X)},
which can be easily deduced from \eqref{e:quantized moment map1},
coincides with those of the basis $\{X_{i,j}^\star \}$ for $\g$.
However, we will give another proof in the following.

The moment map $\mu$ induces a Lie algebra homomorphism from $\g_0$ to $C^{\infty}(W)$, i.e.,
if we write $H_X:=\< \mu, X \>$ for $X \in \g_0$, then we have
\begin{equation}
\label{e:hamiltonian}
 \{ H_X, H_Y \} = H_{[X,Y]} \qquad (X,Y \in \g_0).
\end{equation}
Taking account of the facts that both Poisson bracket and commutator are derivations,
one sees that the relation \eqref{e:hamiltonian} implies that
\[
 [\widehat H_X, \widehat H_Y] = -\ai \widehat H_{[X,Y]}
\]
as required in \eqref{e:quantization principle}, 
since each function $H_X$ is quadratic in the coordinate functions $x_i, y_j$ for any $X \in \g_0$ (see \cite{CG97})
and the commutators among $\widehat x_i$ and $\widehat y_j$ are in the center of $\PD(V)$ for $i,j=1,\dots,n$.
Hence, it follows from $\pi(X)= \ai \widehat H_X$ that
\[
 [ \pi(X), \pi(Y) ] = \pi([X,Y]) \quad (X,Y \in \g_0). 
\]
Now, extend the result to the complexification by linearity.
\end{proof}

%
%
\begin{remark}
By \eqref{e:quantized moment map1}, 
one can rewrite $\pi(X)=\ai \<\widehat \mu, X\>$, $X \in \g$, as follows:
\allowdisplaybreaks{
\begin{align*}
 \pi(X) 
   &= \frac{\ai}2 \trace{\, \widehat \mu \, X }
   = \frac{\ai}2 \trace{
                    \begin{bmatrix}  x  \\[3pt]  - \ai \pd_x  \end{bmatrix}
                    \left( \tp{x}, - \ai \tp{\pd_x}  \right) J_n X
                 }
     \\
   &= \frac{\ai}2  \left( \ai \tp{\pd_x}, \tp{x} \right) X
                    \begin{bmatrix}  x  \\[3pt]  - \ai \pd_x  \end{bmatrix},
\end{align*}
}%
where the last equality follows from the fact that $X$ is a member of $\g$.
Namely, our quantized moment map $\widehat \mu$ is essentially identical to the map $\varphi$ 
given in Example of \cite[Chap.~I, \S 6]{Knapp-Vogan95}.
This observation was the original motivation of the present work.
\end{remark}

It is well known that the irreducible decomposition of the representation $(\pi,\PV)$ of $\g$ is given by
\( \PV = \PV_{+} \oplus \PV_{-} \), 
where $\PV_{+}$ and $\PV_{-}$ are the subspaces consisting of even polynomials $f(x)$ satisfying $f(-x)=f(x)$ 
and of odd polynomials $f(x)$ satisfying $f(-x)=-f(x)$ respectively.
It is also well known that this phenomena can be explained by the type of representations of $\rmO(1)$ which acts on $V$ on the right.

\subsection{}

Let us consider the vector space $W^k := W \oplus \dots \oplus W$, the direct sum of $k$ copies of $W=\R^{2n}$, 
which can be identified with $\Mat{2n \times k}(\R)$.
It is a symplectic vector space equipped with symplectic form $\omega_k$ given by
\[
 \omega_k (v,w) =\trace{\tp v J_n w} \qquad ( v,w \in W^k ).
\]
Let $e_{i,a}$ denote the matrix unit of size $2n \times k$ for $i=1,\dots,n$ and $a=1,\dots,k$.
Under the identification $e_{i,a} \leftrightarrow \pd_{x_{i,a}}$ and $e_{\ibar,a} \leftrightarrow \pd_{y_{i,a}}$,
we write $v={}^t[ x_1, \dots,  x_n,  y_1, \dots,  y_n ] \in W^k$ 
with $x_i=(x_{i,1},\dots,x_{i,k})$ and $y_i=(y_{i,1},\dots,y_{i,k})$ being row vectors%
\footnote{%
More precisely, one should write an element $v \in W^k=\Mat{2n \times k}(\R)$ as $v={}^t[ \tp x_1, \dots,  \tp x_n,  \tp y_1, \dots,  \tp y_n ]$; 
however, we will adopt this abbreviated notation in what follows. 
      }
of size $k$ for $i=1,\dots,n$.
Then $\omega_k$ is given by
\begin{equation}
\label{e:symplectic form1.2}
 \omega_k 
     = \sum_{1 \leqsl i \leqsl n, 1 \leqsl a \leqsl k} \dd x_{i,a} \wedge \dd y_{i,a}
\end{equation}
at $v={}^t[ x_1,\dots, y_n \,]$.
Note that $G=\Sp(n,\R)$ acts on $W^k=\Mat{2n \times k}(\R)$ on the left, 
while the real orthogonal group $\rmO(k)$ acts on the right.
Both actions are symplectic.

For brevity, let us write $x_\star \cdot y_\star = \sum_{a=1}^k x_{\star,a} \, y_{\star,a}$, the standard inner product between two row vectors 
$x_\star=(x_{\star,1},\dots,x_{\star,k})$ and $y_\star=(y_{\star,1},\dots,y_{\star,k})$ of size $k$ in what follows. 
%
%
\begin{proposition}
\label{p:moment map (Sp,O)}
Let $(W^k,\omega_k)$ be the symplectic $G$-vector space.
Then the moment map $\mu:W^k \to \g_0^* \simeq \g_0$ is given by
\allowdisplaybreaks{
\begin{subequations}
  \begin{align}
 \mu( v ) 
   &= v \tp{v} J_n
          \\
   &= \left[
 \begin{array}{cccc|cccc} 
 -x_1 \cdot {y_1} & -x_1 \cdot {y_2}  & \cdots  &  -x_1 \cdot {y_n}   & x_1 \cdot {x_1} & x_1 \cdot {x_2} & \cdots & x_1 \cdot {x_n}         
    \\
 -x_2 \cdot {y_1} & -x_2 \cdot {y_2}  & \cdots  &  -x_2 \cdot {y_n}   & x_2 \cdot {x_1} & x_2 \cdot {x_2} & \cdots & x_2 \cdot {x_n}         
    \\  
 \vdots   & \vdots    & \ddots  & \vdots      & \vdots  & \vdots  & \ddots  & \vdots
    \\[2pt]
 -x_n \cdot {y_1} & -x_n \cdot {y_2}  & \cdots  &  -x_n \cdot {y_n}   & x_n \cdot {x_1} & x_n \cdot {x_2} & \cdots & x_n \cdot {x_n}
    \\[3pt]  \hline 
 -y_1 \cdot {y_1} & -y_1 \cdot {y_2}  & \cdots  &  -y_1 \cdot {y_n}   & y_1 \cdot {x_1} & y_1 \cdot {x_2} & \cdots & y_1 \cdot {x_n}
    \\
 -y_2 \cdot {y_1} & -y_2 \cdot {y_2}  & \cdots  &  -y_2 \cdot {y_n}   & y_2 \cdot {x_1} & y_2 \cdot {x_2} & \cdots & y_2 \cdot {x_n}         
    \\  
 \vdots   & \vdots    & \ddots  & \vdots      & \vdots  & \vdots  & \ddots  & \vdots
    \\
 -y_n \cdot {y_1} & -y_n \cdot {y_2}  & \cdots  &  -y_n \cdot {y_n}   & y_n \cdot {x_1} & y_n \cdot {x_2} & \cdots & y_n \cdot {x_n}
 \end{array}    
   \right]
 \end{align}
\end{subequations}
}%
for $v={}^t[ x_1, \dots,  x_n,  y_1, \dots,  y_n ] \in W^k$.
In particular, $\mu$ is $G$-equivariant and is $\rmO(k)$-invariant.
\end{proposition}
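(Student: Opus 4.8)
The plan is to run exactly the argument used to prove Proposition~\ref{p:moment map Sp}, the only new ingredient being that $G=\Sp(n,\R)$ acts on $W^k=\Mat{2n\times k}(\R)$ one column at a time. So the first step is to record the vector fields on $W^k$ generated by the basis \eqref{e:basis for sp}. Because the left action $v\mapsto gv$ on $\Mat{2n\times k}(\R)$ is just the diagonal extension of the action on $W=\R^{2n}$ to the $k$ columns, the vector field $(X)_{W^k}$ attached to $X$ via \eqref{e:vector field} is the sum over $a=1,\dots,k$ of the vector field of Lemma~\ref{l:VF sp} acting in the variables $(x_{i,a},y_{i,a})_{i}$; e.g.
\[
 (X^0_{i,j})_{W^k}=\sum_{a=1}^k\bigl(-x_{j,a}\pd_{x_{i,a}}+y_{i,a}\pd_{y_{j,a}}\bigr),
\]
and similarly for $X^\pm_{i,j}$. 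This is immediate from \eqref{e:vector field}.

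Next I would contract these with the symplectic form \eqref{e:symplectic form1.2} and recognize the outcome as an exact $1$-form, precisely as in the $k=1$ case but carrying an extra summation over $a$. For instance $\iota((X^0_{i,j})_{W^k})\omega_k=-\dd(x_j\cdot y_i)$, whence $\langle\mu,X^0_{i,j}\rangle=-x_j\cdot y_i$, and likewise $\langle\mu,X^+_{i,j}\rangle=-y_i\cdot y_j$, $\langle\mu,X^-_{i,j}\rangle=x_i\cdot x_j$. In other words, each scalar product of coordinate functions in Proposition~\ref{p:moment map Sp} is replaced by the corresponding inner product $x_\star\cdot y_\star$ of length-$k$ row vectors. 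Expanding $\mu(v)=\sum_\alpha\langle\mu,X_\alpha\rangle X_\alpha^\vee$ against the same dual basis used in Proposition~\ref{p:moment map Sp} then yields the displayed block matrix, which one identifies with $v\tp{v}J_n$ by the same bookkeeping as before, the $i$-th and $\ibar$-th rows of $v\in\Mat{2n\times k}(\R)$ being $x_i$ and $y_i$ respectively.

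Finally, the equivariance assertions are formal and identical to the $k=1$ case: for $g\in G$ one has $\mu(gv)=gv\tp{(gv)}J_n=gv\tp{v}\tp{g}J_n=gv\tp{v}J_ng^{-1}=\Ad(g)\mu(v)$ using $\tp{g}J_n=J_ng^{-1}$, while for $h\in\rmO(k)$ acting on the right by $v\mapsto vh$ one has $\mu(vh)=vh\tp{h}\tp{v}J_n=v\tp{v}J_n=\mu(v)$ since $h\tp{h}=1_k$. There is essentially no real obstacle; the only point requiring care is the bookkeeping of the extra column index $a$ together with the row-versus-column-vector convention for the $x_i,y_i$, which is exactly what turns the products of Proposition~\ref{p:moment map Sp} into the inner products $x_\star\cdot y_\star$ appearing here.
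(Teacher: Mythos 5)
Your proof is correct and is essentially the same as the paper's: note that the $k$-column action is diagonal, so the generating vector fields are the row-vector extensions of Lemma~\ref{l:VF sp}, and then the argument of Proposition~\ref{p:moment map Sp} carries over verbatim, with products replaced by the inner products $x_\star\cdot y_\star$. The paper's own proof states exactly this (with the $\rmO(k)$-invariance left implicit), so there is nothing substantive to add.
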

\begin{proof}
In the agreements that $x_i$, $y_i$, $\pd_{x_i}$ and $\pd_{y_i}$ denote row vectors 
and that the products stand for the inner product of row vectors,
a simple calculation shows that
the vector fields on $W^k $generated by the basis \eqref{e:basis for sp} for $\g_0=\sp(n,\R)$ are given by the same formulae as in Lemma \ref{l:VF sp},
and thus the same argument given in the proof of Proposition \ref{p:moment map Sp} produces the result. 
\end{proof}

It follows from \eqref{e:symplectic form1.2} that the Poisson brackets among the coordinate functions 
$x _{i,a},y_{i,a}$, ${i=1,\dots,n;a=1,\dots,k}$, are given by
\[
  \{ x_{i,a}, y_{j,b} \} = \delta_{i,j} \delta_{a,b}
\]
and all other brackets vanish.
Therefore, we quantize them by assigning
\[
  \widehat x_{{i,a}} = x_{i,a} \quad \text{and} \quad \widehat y_{i,a} = -\ai \pd_{x_{i,a}}
\]
for $i=1,\dots,n$ and $a=1,\dots,k$.

Let $V^k$ denote the direct sum $V \oplus \dots \oplus V$ ($k$ copies) with $V$ given in \eqref{e:Lagrangian1}.
Since $V^k$ can be identified with $\Mat{n \times k}(\C)$, the upper half of $W_\C^k=\Mat{2n \times k}(\C)$,
we write an element of $V^k$ as $x=(x_{i,a})_{i=1,\dots,n;a=1,\dots,k}$.
Let $\PVk = \C[x_{i,a} ; i=1,\dots,n, a=1,\dots,k]$ be the algebra of complex polynomial functions on $V^k$,
and $\PD(V^k)$ the ring of polynomial coefficient differential operators on $V^k$.
Note that $x_{i,a}$'s are now complex variables 
and that the complex general linear group $\GL{k}$ acts on $V^k$ by matrix multiplication on the right, and thus on $\PVk$ by right translation:
\begin{equation}
\label{e:right regular rep. of G'}
 \rho(g)f (x) := f(x g) \quad ( g \in \GL{k}, f \in \PVk ).
\end{equation}
The right-action of $\GL{k}$ on $V^k$ is the restriction of the one on $W_\C^k$.

The quantized moment map $\widehat \mu$ in this case is also given by the same formula as \eqref{e:quantized moment map1}:
\begin{equation*}
\widehat \mu 
 = \begin{bmatrix}
    \,\ai x \tp \pd_x   & x \tp x  \\
    \,\pd_x \tp{\pd_x}  & -\ai \pd_x \tp{x} 
   \end{bmatrix}.
\end{equation*}
In this case, however, $x$ and $\pd_x$ are $n \times k$-matrices 
whose $(i,a)$-th entries are the multiplication operator $x_{i,a}$ and the differential operator $\pd_{x_{i,a}}$ 
for $i=1,\dots,n$ and $a=1,\dots,k$, respectively.

%
%
\begin{lemma}
\label{l:adjoint of rho(G')}
For $x=(x_{i,a})_{i=1,\dots,n;a=1,\dots,k} \in V^k$ and $ g \in \GL{k}$,
the following relations hold in $\operatorname{End}\left( \PVk \right)$:
\allowdisplaybreaks{
\begin{align}
 \rho(g)^{-1} \pd_{x_{i,a}} \, \rho(g) &= \sum_{b} g_{a b} \pd_{x_{i,b}},
     \label{e:Adjoint on pd}
     \\
 \rho(g)^{-1} x_{i,a} \, \rho(g) &= \sum_{b} g^{b a} x_{i,b}, 
     \label{e:Adjoint on x}
\end{align}
}%
where $g=(g_{a b})$ and $g^{-1}=(g^{a b})$. 
\end{lemma}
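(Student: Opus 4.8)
The plan is to verify the two conjugation identities directly by checking how both sides act on an arbitrary monomial, or equivalently on the generators $x_{j,c}$ of $\PVk$, and then extend to all of $\PVk$ using the fact that both sides are algebra endomorphisms (or, for the differential operator, using the Leibniz rule). Since $\rho(g)$ is an algebra automorphism of $\PVk$ and $x_{i,a}$ (as a multiplication operator) and $\pd_{x_{i,a}}$ (as a derivation) are determined by their effect on the generators $x_{j,c}$, it suffices to check the identities on each $x_{j,c}$.

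First I would record the basic action: from \eqref{e:right regular rep. of G'}, $\rho(g) x_{j,c} = \sum_d x_{j,d} g_{d c}$, since the $(j,c)$-entry of $x g$ is $\sum_d x_{j,d} g_{d c}$. Hence $\rho(g)^{-1} x_{j,c} = \sum_d x_{j,d} (g^{-1})_{d c} = \sum_d g^{d c} x_{j,d}$. For \eqref{e:Adjoint on x}, apply the left-hand side to a generator $x_{j,c}$: $\rho(g)^{-1} x_{i,a} \rho(g) \cdot x_{j,c} = \rho(g)^{-1} \bigl( x_{i,a} \cdot \sum_d g_{d c} x_{j,d} \bigr) = \sum_d g_{d c} \, \rho(g)^{-1}(x_{i,a} x_{j,d}) = \sum_d g_{d c} (\rho(g)^{-1}x_{i,a})(\rho(g)^{-1}x_{j,d})$, and then substitute $\rho(g)^{-1} x_{i,a} = \sum_b g^{b a} x_{i,b}$ and $\rho(g)^{-1} x_{j,d} = \sum_e g^{e d} x_{j,e}$; the sum over $d$ of $g_{d c} g^{e d}$ collapses to $\delta_{e c}$, leaving $\sum_b g^{b a} x_{i,b} x_{j,c} = \bigl(\sum_b g^{b a} x_{i,b}\bigr) x_{j,c}$, which is exactly the right-hand side acting on $x_{j,c}$. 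Then I invoke that both operators are multiplication operators determined by their value on generators, so they agree on all of $\PVk$.

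For \eqref{e:Adjoint on pd} I would argue similarly but using that both sides are derivations of $\PVk$ (a conjugate of a derivation by an automorphism is again a derivation, and $\sum_b g_{ab}\pd_{x_{i,b}}$ is visibly a derivation), so it again suffices to compare their action on generators $x_{j,c}$. Computing $\rho(g)^{-1}\pd_{x_{i,a}}\rho(g)\cdot x_{j,c} = \rho(g)^{-1}\pd_{x_{i,a}}\bigl(\sum_d g_{dc} x_{j,d}\bigr) = \rho(g)^{-1}\bigl(\delta_{i,j}\sum_d g_{dc}\delta_{a,d}\bigr) = \delta_{i,j} g_{ac}$, a scalar fixed by $\rho(g)^{-1}$; on the other hand $\bigl(\sum_b g_{ab}\pd_{x_{i,b}}\bigr) x_{j,c} = \delta_{i,j} g_{ac}$ as well. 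Since two derivations agreeing on generators agree everywhere, \eqref{e:Adjoint on pd} follows.

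The only mild subtlety — and the point I would state carefully rather than skip — is the reduction principle: that an algebra endomorphism (resp. a derivation) of the polynomial algebra $\PVk$ is determined by its values on the generators $x_{j,c}$, together with the observation that conjugating a multiplication operator by an algebra automorphism yields a multiplication operator and conjugating a derivation yields a derivation. Once this is granted the computation is a one-line index manipulation with $\sum_d g_{dc} g^{ed} = \delta_{ec}$; I expect no real obstacle beyond bookkeeping of which index of $g$ is summed. I would also note in passing that the identity is natural from the viewpoint that $\GL{k}$ acts on the column index $a$, so $\pd_{x_{i,\bullet}}$ transforms in the standard representation and $x_{i,\bullet}$ in its dual, which is precisely what \eqref{e:Adjoint on pd} and \eqref{e:Adjoint on x} encode.
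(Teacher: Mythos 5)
Your proposal is correct, but you take a somewhat different route from the paper. The paper verifies both identities by a direct computation on an arbitrary $f \in \PVk$: for \eqref{e:Adjoint on pd} it applies the chain rule to $\frac{\dd}{\dd t}\big|_{t=0} f(x g + t e_{i,a} g)$ to read off the coefficient $g_{ab}$, and for \eqref{e:Adjoint on x} it simply expands $(\rho(g)^{-1}(x_{i,a} f))(x) = (x g^{-1})_{i,a}\, f(x g^{-1})$. You instead use a reduction-to-generators argument, observing that conjugation by the algebra automorphism $\rho(g)$ sends derivations to derivations and multiplication operators to multiplication operators, so it suffices to match both sides on the $x_{j,c}$. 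Both are valid and elementary; the paper's computation is shorter since it works directly on a general $f$ and avoids the structural preliminaries, while your version makes the invariance of the operator classes under conjugation explicit, which is nice conceptually. One small imprecision: a multiplication operator is not ``determined by its values on generators'' in the algebraic sense that a derivation or an endomorphism is; rather, $M_f$ is determined by $M_f(1) = f$, or by its value on any single nonzero polynomial since $\PVk$ is a domain. Evaluating on the constant $1$ would have shortened your argument for \eqref{e:Adjoint on x} to one line, $\rho(g)^{-1} x_{i,a}\, \rho(g)\cdot 1 = \rho(g)^{-1} x_{i,a} = \sum_b g^{ba} x_{i,b}$, which is in fact what the paper's manipulation amounts to. Your index computations themselves, including the collapse $\sum_d g_{dc} g^{ed} = \delta_{ec}$, are correct.
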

\begin{proof}
Since $\pd_{x_{i,a}}$ is identified with $e_{i,a} \in \Mat{n \times k}(\C)$, one sees that
\begin{align*}
 \left( \pd_{i,a} (\rho(g) f) \right) (x) 
     &= \left. \frac{\dd}{\dd t} \right|_{t=0} f(x g + t e_{{i,a}} g ) 
     = \sum_{b=0}^k g_{a b} \frac{\pd f}{\pd x_{i,b}} (x g),
\end{align*}
and hence
\[
 \left( \rho(g)^{-1} \pd_{x_{i,a}} \,\rho(g) \right) f = \sum_{b=1}^k g_{a b} \frac{\pd f}{\pd x_{i,b}}
\]
for $f \in \PVk$.
Thus one obtains \eqref{e:Adjoint on pd}.

On the other hand, since
\[
 \left( \rho(g)^{-1}( x_{i,a} f) \right) (x) = \biggl( \sum_{b=1}^k x_{i,b} g^{b a} \biggr) \, f(x g^{-1})
\]
one has
\[
 \left( \rho(g)^{-1} x_{i,a} \, \rho(g) \right) f = \biggl( \sum_{b=1}^k g^{b a} x_{i,b} \biggr) \, f
\]
and \eqref{e:Adjoint on x}.
\end{proof}
Let us abbreviate as $\rho(g) \, a \, \rho(g)^{-1}=:\Ad_{\rho(g)} a$ for $a \in \PD(V^k)$ and $g \in \GL{k}$.
Moreover, for a given matrix $A=(a_{i j})$ with $a_{i j} \in \PD(V^k)$,
let us denote by $\mAd_{\rho(g)} A=(\Ad_{\rho(g)}a_{i j})$, 
the matrix whose $(i,j)$-th entries are equal to $\Ad_{\rho(g)} a_{i j}$.

%
%
%
%
\begin{corollary}
\label{t:pi is a hom (sp,O)}
For $X \in \g=\sp_n$, set $\pi(X) = \ai \< \,\widehat \mu,  X \>$.
Then the map 
\[
 \pi:\g \to \PD(V^k)
\] 
is a Lie algebra homomorphism.
In terms of the basis \eqref{e:basis for sp}, it is given by 
\begin{equation}
\label{e:explicit form of pi (sp,O)}
  \pi(X) = \begin{cases}
	     -\frac12  \sum_{a=1}^k ( x_{i,a} \pd_{x_{j,a}} + \pd_{x_{j,a}} x_{i,a})  &\text{if} \quad X=X_{i,j}^0 ; \\[3pt]
             \; \ai  \sum_{a=1}^k \pd_{x_{i,a}} \pd_{x_{j,a}}  &\text{if} \quad X=X_{i,j}^+ ; \\[3pt]
             \; \ai  \sum_{a=1}^k x_{i,a} x_{j,a}   &\text{if} \quad X=X_{i,j}^-.
           \end{cases}
\end{equation}
Moreover, $\pi(X)$ commutes with the action of the complex orthogonal group%
\footnote{%
We realize the complex orthogonal group as $\rmO_k=\{ g \in \GL{k}; \tp g g = 1_k \}$ in this section.
          }
$\rmO_k$, i.e., $\pi(X) \in \PD(V^k)^{\rmO_k}$ for all $X \in \sp_n$.
\end{corollary}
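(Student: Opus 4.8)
The plan is to handle the two assertions of the corollary in turn, reusing the machinery of Theorem~\ref{t:pi is a hom sp} for the homomorphism property and Lemma~\ref{l:adjoint of rho(G')} for the $\rmO_k$-invariance.

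That $\pi$ is a Lie algebra homomorphism follows word for word from the proof of Theorem~\ref{t:pi is a hom sp}. By Proposition~\ref{p:moment map (Sp,O)} the Hamiltonian $H_X=\<\mu,X\>$ is again quadratic in the coordinate functions $x_{i,a},y_{i,a}$, the assignment $X\mapsto H_X$ is a Lie algebra homomorphism from $\g_0$ into $C^\infty(W^k)$ equipped with the Poisson bracket, and the commutators among $\widehat x_{i,a},\widehat y_{i,a}$ are central in $\PD(V^k)$; hence $[\pi(X),\pi(Y)]=\pi([X,Y])$ for $X,Y\in\g_0$, which one extends to $\g=\sp_n$ by linearity. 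The explicit form \eqref{e:explicit form of pi (sp,O)} is obtained by reading off the entries of $\widehat\mu$ against the basis \eqref{e:basis for sp}, exactly as \eqref{e:explicit form of pi(X)} was derived in the proof of Theorem~\ref{t:pi is a hom sp}; the only change is that $x$ and $\pd_x$ are now $n\times k$ matrices, so the scalar products $x_ix_j$, $\pd_{x_i}\pd_{x_j}$, $x_i\pd_{x_j}$ are replaced by the contractions $\sum_{a=1}^k x_{i,a}x_{j,a}$, $\sum_{a=1}^k\pd_{x_{i,a}}\pd_{x_{j,a}}$, $\sum_{a=1}^k x_{i,a}\pd_{x_{j,a}}$, with the usual Weyl symmetrization $-\frac12(x_{i,a}\pd_{x_{j,a}}+\pd_{x_{j,a}}x_{i,a})$ in the $X_{i,j}^0$ case.

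For the $\rmO_k$-invariance I would argue directly on the matrix $\widehat\mu$. Translating the componentwise identities \eqref{e:Adjoint on pd}, \eqref{e:Adjoint on x} of Lemma~\ref{l:adjoint of rho(G')} (with $g$ replaced by $g^{-1}$) into matrix identities gives, for every $g\in\GL{k}$, the relations $\mAd_{\rho(g)}(x)=xg$ and $\mAd_{\rho(g)}(\pd_x)=\pd_x(\tp g)^{-1}$, where $\mAd_{\rho(g)}$ is an algebra automorphism of $\PD(V^k)$ and so commutes with transposition and matrix multiplication. When $g\in\rmO_k$ one has $\tp g g=g\,\tp g=1_k$, hence $(\tp g)^{-1}=g$; and since, by \eqref{e:quantized moment map1}, each of the four blocks of $\widehat\mu$ is of the shape $\xi\,\tp\eta$ (up to the scalar $\pm\ai$) with $\xi,\eta\in\{x,\pd_x\}$, it is carried by $\mAd_{\rho(g)}$ to $(\xi g)\,\tp{(\eta g)}=\xi\,(g\,\tp g)\,\tp\eta=\xi\,\tp\eta$. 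Therefore $\mAd_{\rho(g)}\widehat\mu=\widehat\mu$, and since $\pi(X)=\ai\<\widehat\mu,X\>$ depends linearly on the entries of $\widehat\mu$ we conclude $\rho(g)\,\pi(X)\,\rho(g)^{-1}=\ai\<\mAd_{\rho(g)}\widehat\mu,X\>=\pi(X)$ for all $g\in\rmO_k$, i.e. $\pi(X)\in\PD(V^k)^{\rmO_k}$ for every $X\in\sp_n$. Equivalently, one may read the invariance straight off \eqref{e:explicit form of pi (sp,O)}: the contractions $\sum_a x_{i,a}x_{j,a}$, $\sum_a\pd_{x_{i,a}}\pd_{x_{j,a}}$, $\sum_a x_{i,a}\pd_{x_{j,a}}$ out of which every $\pi(X)$ is built are exactly the basic invariants of the diagonal $\rmO_k$-action on $V^k$, and each is fixed by $\rho(g)$, $g\in\rmO_k$, by Lemma~\ref{l:adjoint of rho(G')}.

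Everything here is routine; the one point that calls for care is keeping the transposes and inverses in the right places when passing from the index identities of Lemma~\ref{l:adjoint of rho(G')} to the matrix identities for $\mAd_{\rho(g)}(x)$ and $\mAd_{\rho(g)}(\pd_x)$, and then recognizing that the defining relation $\tp g g=1_k$ of $\rmO_k$ is precisely what makes the $g$-factors in each block of $\widehat\mu$ cancel. I do not anticipate a genuine obstacle.
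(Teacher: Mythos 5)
Your argument is correct and follows essentially the same route as the paper's proof: invoke the argument of Theorem~\ref{t:pi is a hom sp} for the homomorphism property and the explicit form, then use Lemma~\ref{l:adjoint of rho(G')} to check block by block that $\widehat\mu$ is fixed under conjugation by $\rho(g)$ for $g\in\rmO_k$. The only cosmetic difference is that the paper works with $\mAd_{\rho(g)^{-1}}$ where you use $\mAd_{\rho(g)}$, and your closing remark reading the invariance off the contractions in \eqref{e:explicit form of pi (sp,O)} is a harmless extra observation.
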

\begin{proof}
The same argument as in the proof of Theorem \ref{t:pi is a hom sp} shows that $\pi : \g \to \PD(V^k)$ is a Lie algebra homomorphism
and that \eqref{e:explicit form of pi (sp,O)} holds.

For the last statement, it follows from Lemma \ref{l:adjoint of rho(G')} that
\begin{equation*}
 \mAd_{\rho(g)^{-1}} x_i = x_i \, g^{-1}
     \quad \text{and} \quad
 \mAd_{\rho(g)^{-1}} \pd_{x_i} = \pd_{x_i} \tp g
\end{equation*}
with $x_i=(x_{i,1},\dots,x_{i,k})$ and $\pd_{x_i}=(\pd_{x_{i,1}},\dots,\pd_{x_{i,k}})$ for $g \in \GL{k}$.
Hence, if $g \in \rmO_k$ then one has
\[
  \begin{bmatrix} \mAd_{\rho(g)^{-1}} x\,  \\[3pt] -\ai \mAd_{\rho(g)^{-1}} \pd \end{bmatrix}
 = \begin{bmatrix} x\,  \\[3pt] -\ai \pd  \end{bmatrix}  \tp g
\]
with $x=\!{}^t [x_1,\dots,x_n]$ and $\pd={\!{}^t} [\pd_{x_1},\dots,\pd_{x_n}]$,
since $\tp g = g^{-1}$.
Therefore,
\allowdisplaybreaks{
\begin{align*}
\mAd_{\rho(g)^{-1}} \widehat \mu 
  &=\begin{bmatrix}
      \ai \mAd_{\rho(g)^{-1}}(x \tp \pd_x)   & \mAd_{\rho(g)^{-1}}(x \tp x)  \\[3pt]
      \mAd_{\rho(g)^{-1}}(\pd_x \tp{\pd_x})  & -\ai \mAd_{\rho(g)^{-1}}(\pd_x \tp{x}) 
    \end{bmatrix} 
           \\
  &=\begin{bmatrix}
     \ai \mAd_{\rho(g)^{-1}} x \, \tp{(\mAd_{\rho(g)^{-1}} \pd_x)} & \mAd_{\rho(g)^{-1}} x \, \tp{(\mAd_{\rho(g)^{-1}} x)} \\[3pt]
     \mAd_{\rho(g)^{-1}} \pd_x \, \tp{(\mAd_{\rho(g)^{-1}} \pd_x)} & -\ai \mAd_{\rho(g)^{-1}} \pd_x \, \tp{(\mAd_{\rho(g)^{-1}} x)}
    \end{bmatrix}
           \\
  &=\begin{bmatrix} \mAd_{\rho(g)^{-1}} x \\[3pt] -\ai \mAd_{\rho(g)^{-1}} \pd_x \end{bmatrix}
    \begin{bmatrix} \tp{(\mAd_{\rho(g)^{-1}} x)}, & -\ai \tp{(\mAd_{\rho(g)^{-1}} \pd_x)} \end{bmatrix} J_n
           \\
  &=\begin{bmatrix} x\,  \\[3pt] -\ai \pd  \end{bmatrix}  \tp g
     g \begin{bmatrix} \tp x,  & -\ai \tp \pd  \end{bmatrix} J_n
     = \widehat \mu.
\end{align*}
}%
This completes the proof.
\end{proof}
It is well known that the irreducible decomposition of $\PVk$ under the joint action of $(\sp_n, \rmO_k)$ is given by
\begin{equation}
\label{e:Howe duality1}
 \PVk \simeq \sum_{{\sigma \in \widehat{\rmO}_k, \, L(\sigma) \ne \{ 0 \} }} L(\sigma) \otimes V_\sigma,
\end{equation}
where $V_\sigma$ is a representative of the class $\sigma \in \widehat{\rmO}_k$,
the set of all equivalence classes of the finite-dimensional irreducible representation of $\rmO_k$,
and $L(\sigma):=\operatorname{Hom}_{\,\mathrm{O}_k}(V_\sigma, \PVk)$ which is an infinite-dimensional irreducible representation of $\sp_n$.
Moreover, it is also known that the action $\pi$ restricted to $\mathfrak k$ lifts to the double cover $\Tilde K_\C$
of the complexification $K_\C$ of the maximal compact subgroup $K$ of $G=\Sp(n,\R)$, 
which implies that $L(\sigma)$ is an irreducible $(\g,\Tilde K_\C)$-module.

Note that our realization of the representation $\pi$ is the Schr\"{o}dinger model of the oscillator representation of $\g=\sp_n$;
we will need another realization of the representation in \S 5, i.e., the \emph{Fock model}.

\section{Reductive dual pair $(\u(p,q),\GL{k})$}
\label{s:GL-GL}

Let $G$ denote the indefinite unitary group defined by
\[
 \U(p,q) = \{ g \in \GL{n}(\C); g^* I_{p,q} g = I_{p,q} \}
\]
with $I_{p,q}=\left[\begin{smallmatrix} 1_p & \\[2pt] & -1_q \end{smallmatrix}\right]$,
and put $n=p+q$ only in this section for brevity.
Set $\g_0=\u(p,q)$, the Lie algebra of $G$, and take a basis for $\g_0$ as follows:
\allowdisplaybreaks{
\begin{equation}
\label{e:basis for u(p,q)}
\begin{aligned}
 X_{i,j}^c &= E_{i,j}-E_{j,i}                &   &(1 \leqsl i < j \leqsl p, \textrm{ or } p+1 \leqsl i < j \leqsl n), \\
 Y_{i,j}^c &= \ai (E_{i,j}+E_{j,i})    &   &(1 \leqsl i \leqsl j \leqsl p, \textrm{ or } p+1 \leqsl i \leqsl j \leqsl n), \\
 X_{i,j}^n &= E_{i, \jbar}+E_{\jbar, i}          &   &(1 \leqsl i \leqsl p, 1 \leqsl j  \leqsl q), \\
 Y_{i,j}^n &= \ai ( E_{i, \jbar} - E_{\jbar, i}) &   &(1 \leqsl i \leqsl p, 1 \leqsl j  \leqsl q),
\end{aligned}
\end{equation}
}%
where $E_{i,j}$ denotes the matrix unit of size $n \times n$.
Note that $E_{i,j}$, $i,j=1,\dots,n$, form a basis for $\g=\gl_{n}$, the complexified Lie algebra of $\g_0=\u(p,q)$.

\subsection{}

Let $W=(\C^{n})_\R$, the underlying real vector space of the complex vector space $\C^{n}$,
and $H: \C^{n} \times \C^{n} \to \C$ the indefinite Hermitian form given by
\[
 H(z,w):= z^* I_{p,q} w  \quad (z,w \in \C^n).
\]
We regard $W$ as symplectic manifold with symplectic form $\omega = \impart H$,
where $\impart H$ stands for the imaginary part of $H$.
Under the identification $e_j \leftrightarrow \pd_{x_j}$ and $\ai e_j \leftrightarrow \pd_{y_j}$ for $j=1,\dots,n$,
it is explicitly given by
\begin{equation}
\label{e:symplectic form2}
 \omega=\sum_{j=1}^{n} \epsilon_j \dd x_j \wedge \dd y_j
\end{equation}
at $z=x+\ai y \in W$ with $x=\tp{(x_1,\dots,x_{n})},\, y=\tp{(y_1,\dots,y_{n})} \in \R^{n}$,
where 
\begin{equation}
 \epsilon_j:=\begin{cases} \hphantom{-} 1 & (j=1,\dots,p) \\ -1 &(j=p+1,\dots,n). \end{cases}
\end{equation}
Then $(W,\omega)$ is a symplectic $G$-manifold since the natural action of $G$ on $\C^{n}$ preserves the Hermitian form $H$.
\begin{lemma}
\label{l:VF unitary}
The vector fields on $W$ generated by the basis \eqref{e:basis for u(p,q)} for $\g_0=\u(p,q)$ in the sense of \eqref{e:vector field} are given by
\begin{equation}
\label{e:VF unitary}
\begin{aligned}
 (X_{i,j}^c)_W &= - x_{j} \pd_{x_{i}} - y_{j} \pd_{y_{i}} + x_{i} \pd_{x_{j}} + y_{i} \pd_{y_{j}},  \\
 (Y_{i,j}^c)_W &= y_{j} \pd_{x_{i}} - x_{j} \pd_{y_{i}} + y_{i} \pd_{x_{j}} - x_{i} \pd_{y_{j}}, \\
 (X_{i,j}^n)_W &= - x_{\jbar} \pd_{x_{i}} - x_{i} \pd_{x_{\jbar}} - y_{\jbar} \pd_{y_{i}} - y_{i} \pd_{y_{\jbar}},    \\
 (Y_{i,j}^n)_W &= y_{\jbar} \pd_{x_{i}} - x_{\jbar} \pd_{y_{i}} - y_{i} \pd_{x_{\jbar}} + x_{i} \pd_{y_{\jbar}}.
\end{aligned}
\end{equation}
\end{lemma}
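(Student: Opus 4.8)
The statement to prove is Lemma~\ref{l:VF unitary}, computing the fundamental vector fields on $W=(\C^n)_\R$ attached to the basis \eqref{e:basis for u(p,q)} of $\g_0=\u(p,q)$. The plan is to unwind the definition \eqref{e:vector field}: for $X\in\g_0$ and $z\in W$ we have $X_W(z)=\frac{d}{dt}\big|_{t=0}\exp(-tX).z=-Xz$, where the right-hand side is ordinary matrix-vector multiplication on $\C^n$, and then to translate this complex-linear velocity vector into the real coordinate fields $\pd_{x_j},\pd_{y_j}$ via the identifications $e_j\leftrightarrow\pd_{x_j}$ and $\ai e_j\leftrightarrow\pd_{y_j}$. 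Concretely, writing $z=x+\ai y$ with $x=\tp{(x_1,\dots,x_n)}$, $y=\tp{(y_1,\dots,y_n)}$, one computes $-Xz\in\C^n$ for each of $X=X_{i,j}^c,Y_{i,j}^c,X_{i,j}^n,Y_{i,j}^n$, separates real and imaginary parts, and reads off the coefficients of $\pd_{x_\ell},\pd_{y_\ell}$.

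First I would record the building block: for a matrix unit $E_{k,\ell}$ one has $(E_{k,\ell}z)_m=\delta_{m,k}z_\ell$, so $-E_{k,\ell}z$ is the vector with a single nonzero entry $-z_\ell$ in position $k$. Under the identification this entry $-z_\ell=-(x_\ell+\ai y_\ell)$ sitting in slot $k$ contributes $-x_\ell\pd_{x_k}-y_\ell\pd_{y_k}$ to the vector field. Likewise $\ai E_{k,\ell}z$ has entry $\ai z_\ell=-y_\ell+\ai x_\ell$ in slot $k$, contributing $-y_\ell\pd_{x_k}+x_\ell\pd_{y_k}$. Then each basis element of \eqref{e:basis for u(p,q)} is a $\pm$-combination of one or two such terms (for the compact-type elements $X_{i,j}^c=E_{i,j}-E_{j,i}$, $Y_{i,j}^c=\ai(E_{i,j}+E_{j,i})$ with $i,j$ in the same block, and for the noncompact-type elements $X_{i,j}^n=E_{i,\jbar}+E_{\jbar,i}$, $Y_{i,j}^n=\ai(E_{i,\jbar}-E_{\jbar,i})$ mixing the two blocks), so adding the contributions gives exactly the four formulas in \eqref{e:VF unitary}. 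One should double-check the sign conventions against the defining relation \eqref{e:vector field} — the minus sign in $\exp(-tX)$ is what produces the leading $-$ signs — and verify in one or two cases (say $X_{i,j}^c$ with $i<j$ and $X_{i,j}^n$) that the resulting vector fields indeed preserve $\omega$ in \eqref{e:symplectic form2}, i.e.\ that they are symplectic, as a consistency check.

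There is essentially no hard mathematical obstacle here; the lemma is a routine coordinate computation, and the author in fact relegates the analogous Lemma~\ref{l:VF sp} to "an easy exercise." The only place to be careful is bookkeeping: keeping track of which indices $\ibar=\jbar=p+j$ refer to the second block, and making sure that when $i=j$ in $Y_{i,j}^c$ the element is still $\ai(E_{i,i}+E_{i,i})=2\ai E_{i,i}$ — but the formula in \eqref{e:VF unitary} is stated so that substituting $i=j$ gives $2(y_i\pd_{x_i}-x_i\pd_{y_i})$, matching $\ai\cdot 2E_{i,i}$ under the identification, so the single displayed formula covers the diagonal case correctly. Thus the proof amounts to: (i) state $X_W(z)=-Xz$; (ii) expand in matrix units; (iii) apply the two building-block identities above and collect terms. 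I would present it in essentially that compact form rather than grinding through all four cases separately.

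\begin{proof}
By \eqref{e:vector field}, for $X \in \g_0$ and $z \in W$ one has $X_W(z) = \frac{\dd}{\dd t}\big|_{t=0}\exp(-tX).z = -Xz$, matrix multiplication on $\C^n$. Writing $z = x + \ai y$ and using $e_j \leftrightarrow \pd_{x_j}$, $\ai e_j \leftrightarrow \pd_{y_j}$, the vector $-E_{k,\ell}z$, whose only nonzero component is $-z_\ell = -(x_\ell + \ai y_\ell)$ in slot $k$, corresponds to $-x_\ell \pd_{x_k} - y_\ell \pd_{y_k}$, and $\ai E_{k,\ell}z$, whose only nonzero component is $\ai z_\ell = -y_\ell + \ai x_\ell$ in slot $k$, corresponds to $-y_\ell \pd_{x_k} + x_\ell \pd_{y_k}$. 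Applying this to $X_{i,j}^c = E_{i,j} - E_{j,i}$, $Y_{i,j}^c = \ai(E_{i,j}+E_{j,i})$, $X_{i,j}^n = E_{i,\jbar}+E_{\jbar,i}$ and $Y_{i,j}^n = \ai(E_{i,\jbar}-E_{\jbar,i})$ and summing the contributions gives \eqref{e:VF unitary}; the displayed formulas also cover the diagonal cases $i = j$ in $Y_{i,j}^c$, since then $Y_{i,i}^c = 2\ai E_{i,i}$ corresponds to $2(y_i\pd_{x_i} - x_i\pd_{y_i})$. One checks directly from \eqref{e:symplectic form2} that each of these vector fields preserves $\omega$, as it must.
\end{proof}
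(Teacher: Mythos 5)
Your proof is correct: the computation $X_W(z)=-Xz$, the two building-block identities for $E_{k,\ell}z$ and $\ai E_{k,\ell}z$ under the identification $e_j\leftrightarrow\pd_{x_j}$, $\ai e_j\leftrightarrow\pd_{y_j}$, and the collection of terms all check out, including the diagonal case $i=j$ in $Y_{i,j}^c$. The paper states this lemma without proof (the analogous Lemma~\ref{l:VF sp} is dismissed as "an easy exercise"), and your argument is exactly the intended routine verification.
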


Note that the unitary group $\U(1)$ also acts on $W$ symplectically on the right.
%
%
\begin{proposition}
\label{p:moment map2}
Let $(W, \omega)$ be as above and $G=\U(p,q)$.
Then the moment map $\mu: W \to \g_0^* \simeq \g_0$ is given by
\begin{equation}
\label{e:moment map2 closed} 
\mu(z) = - \frac{\ai}2 z z^* I_{p,q}
\end{equation}
for $z=x + \ai y \in W$ with $x=\tp{(x_1,\dots,x_{n})}, y=\tp{(y_1,\dots,y_{n})} \in \R^{n}$.
In particular, $\mu$ is $G$-equivariant and is $\U(1)$-invariant.
\end{proposition}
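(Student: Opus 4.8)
The plan is to mimic the proof of Proposition \ref{p:moment map Sp} exactly, now using Lemma \ref{l:VF unitary} in place of Lemma \ref{l:VF sp}. First I would, for each basis element $X$ from the list \eqref{e:basis for u(p,q)}, contract the corresponding vector field $(X)_W$ from \eqref{e:VF unitary} against the symplectic form $\omega = \sum_{j} \epsilon_j \, \dd x_j \wedge \dd y_j$ of \eqref{e:symplectic form2}, obtaining an exact $1$-form and reading off $\< \mu, X\>$ up to a constant. For instance, $\iota((X_{i,j}^c)_W)\,\omega = -\dd(\epsilon_i x_i y_j - \epsilon_j x_j y_i)$ (using $\epsilon_i = \epsilon_j$ in the relevant index ranges), so $\< \mu, X_{i,j}^c\> = -(x_i y_j - x_j y_i)$, and similarly one computes $\< \mu, Y_{i,j}^c\>$, $\< \mu, X_{i,j}^n\>$ and $\< \mu, Y_{i,j}^n\>$; in the mixed cases one must keep track that $\epsilon_i = 1$ and $\epsilon_{\jbar} = -1$.

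Next I would reassemble $\mu(z) = \sum_\alpha \< \mu, X_\alpha\> \, X_\alpha^\vee$, where the dual basis is taken with respect to the trace form $B(X,Y) = \trace(XY)$ (not $\tfrac12\trace$) as dictated by \eqref{e:trace form} for $\u(p,q)$. The cleanest route is to write everything in terms of the complex coordinates $z_j = x_j + \ai y_j$: one expects the products $x_i y_j$ etc. to organize into the entries of $\impart(z z^*)$ or $\repart(z z^*)$, so that the sum collapses to $\mu(z) = -\tfrac{\ai}{2} z z^* I_{p,q}$. A sanity check on the diagonal entries: $\< \mu, Y_{i,i}^c\>$ should reproduce $-\tfrac{\ai}{2}\epsilon_i (x_i^2 + y_i^2) = -\tfrac{\ai}{2}\epsilon_i |z_i|^2$, which matches the $(i,i)$ entry of $-\tfrac{\ai}{2} z z^* I_{p,q}$. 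The equivariance and $\U(1)$-invariance then follow formally exactly as in Proposition \ref{p:moment map Sp}: $\U(1)$-invariance because $z z^*$ is unchanged under $z \mapsto e^{\ai\vartheta} z$, and $G$-equivariance because for $g \in \U(p,q)$ one has $g^* I_{p,q} g = I_{p,q}$, hence $\mu(gz) = -\tfrac{\ai}{2} g z z^* g^* I_{p,q} = -\tfrac{\ai}{2} g z z^* I_{p,q} g^{-1} = \Ad(g)\mu(z)$, using $g^* I_{p,q} = I_{p,q} g^{-1}$.

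The main obstacle, such as it is, is purely bookkeeping: the sign factors $\epsilon_j$ scattered through $\omega$, the two index blocks (compact part split into $1 \le i < j \le p$ and $p+1 \le i < j \le n$ versus the noncompact part straddling the two blocks), and the factors of $\ai$ attached to the $Y$-type generators all have to be shepherded consistently, and the dual basis with respect to $B$ must be computed correctly for each generator type (e.g. $(Y_{i,i}^c)^\vee$ carries a $\tfrac12$ relative to $(Y_{i,j}^c)^\vee$ for $i\ne j$, just as in the symplectic case, and there is an extra $\ai$ because $B(Y_{i,j}^c, \cdot)$ involves the factor $\ai$ in the definition of $Y_{i,j}^c$). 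Once the mixed $X^n$, $Y^n$ contributions are correctly combined with the compact contributions, the terms should package into $-\tfrac{\ai}{2} z z^* I_{p,q}$ with no loose ends. I would present the computation for one representative generator of each of the four types and then simply state that the remaining cases are analogous, exactly in the style already used in this paper.
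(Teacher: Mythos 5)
Your strategy matches the paper's proof step for step: contract the vector fields of Lemma \ref{l:VF unitary} against $\omega$ to read off each $\langle\mu, X\rangle$, reassemble via the $B$-dual basis, and argue the equivariance and $\U(1)$-invariance exactly as you state. There is, however, a sign error in your illustrative computation. Contracting $(X_{i,j}^c)_W = -x_j\pd_{x_i} - y_j\pd_{y_i} + x_i\pd_{x_j} + y_i\pd_{y_j}$ against $\omega = \sum_l \epsilon_l\, \dd x_l \wedge \dd y_l$ gives
\[
\iota\bigl((X_{i,j}^c)_W\bigr)\omega
= \epsilon_i\bigl(y_j\,\dd x_i - x_j\,\dd y_i\bigr) + \epsilon_j\bigl(x_i\,\dd y_j - y_i\,\dd x_j\bigr)
= \dd\bigl(\epsilon_i x_i y_j - \epsilon_j x_j y_i\bigr),
\]
with a leading $+$ sign, so $\langle\mu, X_{i,j}^c\rangle = \epsilon_i\,(x_i y_j - x_j y_i)$ rather than $-(x_i y_j - x_j y_i)$; your final expression also drops the $\epsilon_i$, so it agrees with the correct value only in the block $p+1\le i<j\le n$ where $\epsilon_i = -1$. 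You appear to have imported the leading minus from the $\Sp(n,\R)$ computation in Proposition \ref{p:moment map Sp}, where $\iota((X_{i,j}^0)_W)\omega = -\dd(x_j y_i)$; but the sign pattern of the four terms of $(X_{i,j}^c)_W$ under contraction is different, and here the result is positive. Once this is corrected, the reassembly does collapse to $-\tfrac{\ai}{2}\,z z^* I_{p,q}$ as you predict, and your equivariance and $\U(1)$-invariance arguments coincide with the paper's.
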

\begin{proof}
It follows from Lemma \ref{l:VF unitary} that
\allowdisplaybreaks{
\begin{equation}
\label{e:moment map2 real}
 \< \mu, X \>=\begin{cases}
               \epsilon_{i} (x_{i} y_{j} - x_{j} y_{i})  &\text{if} \quad X=X_{i,j}^c ; \\
	       \epsilon_i (x_{i} x_{j} + y_{i} y_{j})    &\text{if} \quad X=Y_{i,j}^c ; \\
                x_{i} y_{\jbar} - x_{\jbar} y_{i}            &\text{if} \quad X=X_{i,j}^n ; \\
                x_{i} x_{\jbar} + y_{i} y_{\jbar}             &\text{if} \quad X=Y_{i,j}^n,
	      \end{cases} 
\end{equation}
which can be rewritten in terms of the complex coordinates defined by $z_j = x_j + \ai y_j$ $(j=1,\dots,n)$ and their complex conjugates as
\begin{equation}
\label{e:moment map2 complex}
 \< \mu, X \>=\begin{cases}
               \frac{\ai}2 \epsilon_{i} (z_{i} \bar z_{j} - z_{j} \bar z_{i})  &\text{if} \quad X=X_{i,j}^c  ; \\
	       \frac12 \epsilon_i (z_{i} \bar z_{j} + z_{j} \bar z_{i})    &\text{if} \quad X=Y_{i,j}^c      ; \\
               \frac{\ai}2 (z_{i} \bar z_{\jbar} - z_{\jbar} \bar z_{i})            &\text{if} \quad X=X_{i,j}^n ; \\
               \frac12 (z_{i} \bar z_{\jbar} + z_{\jbar} \bar z_{i})            &\text{if} \quad X=Y_{i,j}^n.
	      \end{cases} 
\end{equation}
Hence,
\begin{align*}
 \mu(z)  &=\sum_{i < j} \<\mu, X_{i,j}^c \> (X_{i,j}^c)^\vee 
          + \sum_{i \leqsl j} \< \mu, Y_{i,j}^c \> (Y_{i,j}^c)^\vee 
          + \sum_{i,j} \< \mu, X_{i,j}^n \> (X_{i,j}^n)^\vee  
          + \sum_{i,j} \< \mu, Y_{i,j}^n \> (Y_{i,j}^n)^\vee  \\
         &= -\frac{\ai}2 \sum_{1 \leqsl i,j \leqsl p} z_i \bar z_j E_{i,j}
           + \frac{\ai}2 \sum_{1 \leqsl i,j \leqsl q} z_{\ibar} \bar z_{\jbar} E_{\ibar,\jbar}
           + \frac{\ai}2 \sum_{1 \leqsl i \leqsl p, 1 \leqsl j \leqsl q} z_i \bar z_{\jbar} E_{i,\jbar}
           - \frac{\ai}2 \sum_{1 \leqsl i \leqsl q, 1 \leqsl j \leqsl p} z_{\ibar} \bar z_j E_{\ibar,j} \\
         &= -\frac{\ai}2 z z^* I_{p,q},
\end{align*}
}%
with $z=\tp{(z_1,\dots,z_n)}$.

The $\U(1)$-invariance of $\mu$ is obvious, and the $G$-equivariance can be verified as follows:
\[
 \mu(g z) = -\frac{\ai}2 (g z) (g z)^* I_{p,q} = -\frac{\ai}2 g z z^* g^* I_{p,q} = \Ad(g) \mu(z)
\]
since $g^* I_{p,q} = I_{p,q} g^{-1}$ for $g \in \U(p,q)$.
\end{proof}

It follows from \eqref{e:symplectic form2} that 
the Poisson brackets among the real coordinate functions $x_i,y_i$, $i=1,\dots,n$, are given by
\begin{equation}
\label{e:Poisson bracket2}  
 \{ x_i, y_j \}= -\epsilon_i \delta_{i,j} 
  \quad (i,j=1,2,\dots,n),
\end{equation}
and all other brackets vanish. 
In terms of the complex coordinates $z_j = x_j + \ai y_j$, $j=1,2,\dots, n$, and their conjugates,
it follows from \eqref{e:Poisson bracket2} that the Poisson brackets among $z_j$ and $\bar z_j$ are given by
\begin{equation}
\label{e:Poisson bracket2 complex}
\{ z_i, \bar z_j \} = 2 \ai \epsilon_i \delta_{i,j},
   \quad 
 \{ z_i, z_j \} = \{ \bar z_i, \bar z_j \} = 0
\end{equation}
for $i,j = 1,2,\dots,n$.
In view of \eqref{e:Poisson bracket2 complex} we quantize $z_i$ and $\bar z_i$ by assigning
\begin{equation}
\label{e:canonical quantization2 complex}
 \widehat{z}_i = z_i, \quad
 \widehat{\bar z}_i = -2  \epsilon_i {\pd_{z_i}},
\end{equation}
so that they satisfy 
\begin{equation}
\label{e:commutation relation2 complex}
 [ \,\widehat{\vphantom{\bar z}z}_i, \,\widehat{\bar z}_j \,] = 2  \epsilon_i \delta_{i,j},
   \quad 
 [ \,\widehat z_i, \,\widehat z_j \,] = [ \,\widehat{\bar z}_i,\,\widehat{\bar z}_j \,] = 0
\end{equation}
for $i,j = 1,2,\dots,n$.
Therefore, we quantize the moment map $\mu$ and denote the quantized moment map by $\widehat \mu$ as follows:
\begin{align}
 \widehat \mu 
    &= -\frac{\ai}2 \begin{bmatrix} \,\widehat z_1 \\ \vdots \\ \,\widehat z_{n} \end{bmatrix}
                    \left( \, \widehat{\bar z}_1, \dots, \widehat{\bar z}_{n} \right) I_{p,q}
    = \ai \begin{bmatrix} z_1 \\ \vdots \\ z_{n} \end{bmatrix}
          \left(  \pd_{z_1 }, \dots, \pd_{z_{n}} \right)
     = \ai z \tp{\pd_z}
        \label{e:quantized moment map2}
\end{align}
with $z = \tp{(z_1, \dots,z_{n})}$ and $\pd_z = \tp{(\pd_{z_1}, \dots,\pd_{z_{n}})}$.
Note that the quantization \eqref{e:canonical quantization2 complex} corresponds to taking a complex Lagrangian subspace $V'$ given by
\begin{equation}
\label{e:Lagrangian2}
 V':=\left\< \tfrac12(e_1 - \ai I e_1), \dots, \tfrac12(e_n - \ai I e_n) \right\>_\C  \subset W_\C,
\end{equation}
where $I$ denotes the complex structure on $W$ defined by $e_j \mapsto \ai e_j$, $\ai e_j \mapsto -e_j$ for $j=1,\dots,n$.
The classical observables $z_j=x_j + \ai y_j$ can be regarded as the coordinates on $V'$ with respect to this basis
under the identification $e_j \leftrightarrow \pd_{x_j}$ and $\ai e_j \leftrightarrow \pd_{y_j}$, $j=1,\dots,n$,
and $V'$ is naturally identified with $\C^n$.
Let $\PVprime$ denote the algebra of complex coefficient polynomial functions on $V$,
i.e., $\PVprime=\C[z_1,\dots,z_{n}]$,
and $\PD(V')$ the ring of polynomial coefficient differential operators on $V'$.
%
%
\begin{theorem}
\label{t:pi is a hom gl}
 For $X \in \g=\gl_{n}$, set $\pi(X)=\ai \<\widehat{\mu}, X\>$.
 Then the map
 \[
 \pi: \g \to \PD(V')
 \]
 is a Lie algebra homomorphism.
 In terms of the basis $\{ E_{i,j} \}$ for $\g$, it is given by
 \begin{equation}
 \label{e:explicit form of pi2}
 \pi(E_{i,j}) 
   = - z_j \pd_{z_i}
 \end{equation}
 for $i,j=1,\dots,n$.
\end{theorem}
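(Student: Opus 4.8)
The plan is to mimic the proof of Theorem \ref{t:pi is a hom sp} exactly, since the case $G=\U(p,q)$ differs only in the form of the symplectic structure and of the moment map, both of which are still quadratic in the (real) coordinate functions. First I would invoke Proposition \ref{p:moment map2}: the moment map $\mu$ induces a Lie algebra homomorphism $X \mapsto H_X := \<\mu,X\>$ from $\g_0 = \u(p,q)$ into $C^\infty(W)$ with the Poisson bracket, so that $\{H_X, H_Y\} = H_{[X,Y]}$ for $X,Y \in \g_0$. By \eqref{e:moment map2 real} (equivalently \eqref{e:moment map2 complex}) each $H_X$ is a homogeneous quadratic polynomial in the real coordinates $x_i, y_i$, hence in $z_i, \bar z_i$. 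The canonical quantization \eqref{e:canonical quantization2 complex} sends $z_i, \bar z_i$ to operators whose commutators \eqref{e:commutation relation2 complex} lie in the center of $\PD(V')$; since both the Poisson bracket and the commutator are derivations in each argument, and both $H_X, H_Y$ are quadratic with central "mutual commutators", the identity $\{H_X,H_Y\} = H_{[X,Y]}$ passes to $[\widehat H_X, \widehat H_Y] = -\ai \widehat H_{[X,Y]}$, exactly the quantization principle \eqref{e:quantization principle}. Setting $\pi(X) = \ai \<\widehat\mu, X\> = \ai \widehat H_X$ then gives $[\pi(X),\pi(Y)] = \pi([X,Y])$ for $X,Y \in \g_0$, and extending by $\C$-linearity yields the homomorphism $\pi: \g = \gl_n \to \PD(V')$.

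Next I would establish the explicit formula \eqref{e:explicit form of pi2}. From the closed form of the quantized moment map \eqref{e:quantized moment map2}, $\widehat\mu = \ai\, z\, \tp{\pd_z}$, i.e. the $(i,j)$-entry of $\widehat\mu$ is $\ai\, z_i \pd_{z_j}$. Using the invariant form $B(X,Y) = \trace(XY)$ on $\gl_n$ from \eqref{e:trace form}, one has $\<\widehat\mu, E_{i,j}\> = \trace(\widehat\mu\, E_{i,j}) = (\widehat\mu)_{j,i} = \ai\, z_j \pd_{z_i}$, whence $\pi(E_{i,j}) = \ai \<\widehat\mu, E_{i,j}\> = \ai \cdot \ai\, z_j \pd_{z_i} = -z_j \pd_{z_i}$. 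As a sanity check one can verify directly that the operators $-z_j\pd_{z_i}$ satisfy the $\gl_n$ bracket $[-z_j\pd_{z_i}, -z_\ell\pd_{z_k}] = \delta_{i,\ell}(-z_j\pd_{z_k}) - \delta_{k,j}(-z_\ell\pd_{z_i})$, matching $[E_{i,j}, E_{k,\ell}] = \delta_{j,k}E_{i,\ell} - \delta_{\ell,i}E_{k,j}$; this also re-proves the homomorphism property concretely and confirms the normalization of $B$.

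The only genuinely delicate point is the passage from $\{H_X,H_Y\} = H_{[X,Y]}$ to its quantized counterpart, and it is worth being precise about why "quadratic $+$ central mutual commutators" suffices. Writing each $H_X$ as a sum of monomials $\xi\eta$ in the generators $z_i, \bar z_i$ (call them collectively $\xi_\alpha$), the Poisson bracket $\{\xi_\alpha \xi_\beta, \xi_\gamma \xi_\delta\}$ expands by the Leibniz rule into a sum of terms, each a product of one bracket $\{\xi_\bullet,\xi_\bullet\}$ (a scalar, by \eqref{e:Poisson bracket2 complex}) with two surviving generators; the analogous commutator $[\widehat\xi_\alpha\widehat\xi_\beta, \widehat\xi_\gamma\widehat\xi_\delta]$ expands the same way, with each $[\widehat\xi_\bullet,\widehat\xi_\bullet]$ a scalar by \eqref{e:commutation relation2 complex} equal to $\ai$ times the corresponding Poisson bracket, and crucially the ordering ambiguities among the surviving generators do not matter because they only produce further central terms that cancel. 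Hence the quadratic identity quantizes term-by-term with the single overall factor $-\ai$. This is the same mechanism used in Theorem \ref{t:pi is a hom sp}, so I would simply refer back to that proof rather than repeat the bookkeeping. The remaining assertions — that $\pi$ does land in $\PD(V')$ and that \eqref{e:Lagrangian2} is the Lagrangian being quantized — are immediate from the formulas and have already been recorded in the surrounding discussion.
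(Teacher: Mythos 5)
Your proof is correct and follows essentially the same route as the paper: the paper's own proof of this theorem is the one-liner "same argument as in Theorem~\ref{t:pi is a hom sp}, and \eqref{e:explicit form of pi2} follows immediately from \eqref{e:quantized moment map2}," and you have simply unpacked both halves — the derivation-plus-central-commutators mechanism for the homomorphism property, and the trace computation $\pi(E_{i,j})=\ai\,(\widehat\mu)_{j,i}=\ai\cdot\ai\,z_j\pd_{z_i}=-z_j\pd_{z_i}$ for the explicit formula — in more detail than the paper bothers to.

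One small slip in your parenthetical sanity check: the displayed commutator identity
\[
[-z_j\pd_{z_i},\,-z_\ell\pd_{z_k}] = \delta_{i,\ell}(-z_j\pd_{z_k})-\delta_{k,j}(-z_\ell\pd_{z_i})
\]
has the two terms on the right transposed; the left-hand side actually equals $\delta_{i,\ell}\,z_j\pd_{z_k}-\delta_{k,j}\,z_\ell\pd_{z_i}$, which is $\delta_{j,k}(-z_\ell\pd_{z_i})-\delta_{\ell,i}(-z_j\pd_{z_k})=\pi(\delta_{j,k}E_{i,\ell}-\delta_{\ell,i}E_{k,j})$, so the homomorphism property does hold — but your formula as written would give the negative, i.e.\ an anti-homomorphism. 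Since this check is only corroborative and the main argument via the quantization principle is airtight, this does not affect the validity of the proof; just swap the two terms so the check actually confirms what you want it to.
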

\begin{proof}
The same argument as in Theorem \ref{t:pi is a hom sp} shows that $\pi$ is a Lie algebra homomorphism,
and \eqref{e:explicit form of pi2} follows immediately from \eqref{e:quantized moment map2}.
\end{proof}

It is clear from \eqref{e:explicit form of pi2} that $\pi(X) \in \PD(V')^{\GL{1}}$ for all $X \in \g$,
where $\GL{1}$ acts on $V'$ on the right.

\subsection{}
Now let us consider $W^k$, the direct sum of $k$ copies of $W=(\C^n)_\R$, 
which is identified with the underlying real vector space of $\Mat{n \times k}(\C)$.
It is equipped with a symplectic form $\omega_k$ given by
\[
 \omega_k (z,w) = \impart \trace{z^* I_{p,q} w} \quad (z,w \in W^k),
\]
and is still acted on by $G=\U(p,q)$ symplectically by matrix multiplication on the left.
Under the identification of $e_{i,a} \leftrightarrow \pd_{x_{i,a}}$ and $\ai e_{i,a} \leftrightarrow \pd_{y_{i,a}}$,
we write an element of $W^k$ as $z ={}^t[z_1, \dots, z_n]$, 
where $z_i = x_i + \ai y_i$ are complex row vectors 
with $x_i=(x_{i,1},\dots,x_{n,k})$ and $y_i=(y_{i,1},\dots,y_{i,k})$ being real row vectors of size $k$ for $i=1,\dots,n$.
Then $\omega_k$ is given by
\begin{equation}
\label{e:symplectic form2_k}
 \omega_k 
     =\sum_{1 \leqsl i \leqsl n, 1 \leqsl a \leqsl k} \epsilon_i \dd x_{i,a} \wedge \dd y_{i,a}
\end{equation}
at $z={}^t[z_1,\dots,z_n] \in W^k$.
Note that $\U(p,q)$ acts on $W$ on the left, while $\U(k)$ acts on it on the right and that both actions are symplectic.

%
%
\begin{proposition}
Let $(W^k,\omega_k)$ be the symplectic $G$-vector space as above.
Then the moment map $\mu:W^k \to \g_0^* \simeq \g_0$ is given by the same formula as \eqref{e:moment map2 closed} 
\[
 \mu = -\frac{\ai}2 z z^* I_{p,q}
\]
with $z \in W^k=\Mat{n \times k}(\C)$.
In particular, $\mu$ is $G$-equivariant and is $\U(k)$-invariant.
\end{proposition}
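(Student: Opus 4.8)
The plan is to deduce the closed formula from the single-copy result, Proposition \ref{p:moment map2}, in exactly the way that Proposition \ref{p:moment map (Sp,O)} was obtained from Proposition \ref{p:moment map Sp}. First I would compute the vector fields on $W^k = \Mat{n \times k}(\C)$ generated by the basis \eqref{e:basis for u(p,q)} in the sense of \eqref{e:vector field}. Because $G = \U(p,q)$ acts on $W^k$ by left matrix multiplication $z \mapsto gz$, it acts on each of the $k$ columns of $z$ by the same matrix $g$; hence, adopting the convention that $x_i, y_i, \pd_{x_i}, \pd_{y_i}$ now denote row vectors of length $k$ and that a product of two such symbols means the inner product over $a = 1, \dots, k$, one checks directly that $X_{W^k}$ is given by the same four formulae as in Lemma \ref{l:VF unitary}. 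Equivalently, $X_{W^k}$ splits as the sum over the column index $a$ of the $a$-th copy of the single-copy vector field, and by \eqref{e:symplectic form2_k} the form $\omega_k$ splits the same way. This is really the only step that requires care.

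Next I would contract these vector fields against $\omega_k = \sum_{i,a} \epsilon_i\,\dd x_{i,a} \wedge \dd y_{i,a}$ and solve $\dd\,\<\mu, X\> = \iota(X_{W^k})\omega_k$ for each basis element, obtaining the Hamiltonians $\<\mu, X_{i,j}^c\> = \epsilon_i \sum_a (x_{i,a} y_{j,a} - x_{j,a} y_{i,a})$ together with the analogous inner-product versions of the remaining three lines of \eqref{e:moment map2 real}. Rewriting in the complex coordinates $z_{i,a} = x_{i,a} + \ai y_{i,a}$ gives the $k$-copy analogue of \eqref{e:moment map2 complex}, and assembling $\mu(z) = \sum_\alpha \<\mu, X_\alpha\> X_\alpha^\vee$ with respect to $B$ — collecting the matrix units $E_{i,j}, E_{\ibar,\jbar}, E_{i,\jbar}, E_{\ibar,j}$ exactly as in the proof of Proposition \ref{p:moment map2} — collapses term by term to $-\tfrac{\ai}{2}\,z z^* I_{p,q}$, now with $z \in \Mat{n \times k}(\C)$, so that $zz^*$ is the $n \times n$ matrix with $(i,j)$-entry $\sum_a z_{i,a}\bar z_{j,a}$. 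This establishes the displayed formula.

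Finally, the two equivariance assertions are purely formal consequences of this closed formula and require no further computation. For $g \in \U(k)$ acting on the right one has $\mu(zg) = -\tfrac{\ai}{2}(zg)(zg)^* I_{p,q} = -\tfrac{\ai}{2}\,z(gg^*)z^* I_{p,q} = \mu(z)$ since $gg^* = 1_k$; for $h \in G = \U(p,q)$ acting on the left, $\mu(hz) = -\tfrac{\ai}{2}\,h z z^* h^* I_{p,q} = \Ad(h)\mu(z)$ using $h^* I_{p,q} = I_{p,q} h^{-1}$, just as in Proposition \ref{p:moment map2}. Thus I expect the proof to be very short; the only (modest) obstacle is checking that the generating vector fields and the symplectic form genuinely diagonalize over the extra index, which lets the single-copy computation go through verbatim with inner products in place of scalar products.
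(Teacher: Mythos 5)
Your proposal is correct and follows essentially the same route as the paper: the paper's proof likewise reduces to the single-copy computation of Proposition \ref{p:moment map2} by reinterpreting the coordinates $x_i, y_i, \pd_{x_i}, \pd_{y_i}$ as length-$k$ row vectors and products as inner products over the column index, and then verifies the two invariances directly from the closed formula using $h^* I_{p,q} = I_{p,q} h^{-1}$ and $gg^*=1_k$. Your write-up is a fuller account of exactly the argument the paper sketches.
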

\begin{proof} 
As in the proof of Proposition \ref{p:moment map (Sp,O)},
if we regard $x_i, y_i, \pd_{x_i}$ and $\pd_{y_i}$ as row vectors and the products as the inner product on the space of row vectors,
then similar argument to Proposition \ref{p:moment map2} shows that the moment map $\mu:W^k \to \g_0$ is given by \eqref{e:moment map2 closed},
with the understanding that $z \in \Mat{n \times k}(\C)$.
The $\U(k)$-invariance is obvious, and the $G$-equivariance is verified as in Proposition \ref{p:moment map2}.
\end{proof}
It follows from \eqref{e:symplectic form2_k} that 
the Poisson brackets among the real coordinate functions $x_{i,a}, y_{i,a}$, $i=1,\dots,n; a=1,\dots,k$, are given by
\begin{equation}
\label{e:Poisson bracket2_k}  
 \{ x_{i,a}, y_{j,b} \}= -\epsilon_i \delta_{i,j} \delta_{a,b}
  \quad (i,j=1,\dots,n; a,b=1,\dots,k),
\end{equation}
and all other brackets vanish. 
It follows from \eqref{e:Poisson bracket2_k} that 
the Poisson brackets among the complex coordinates $z_{j,a} = x_{j,a} + \ai y_{j,a}$ and their conjugates are given by
\begin{equation}
\label{e:Poisson bracket2_k complex}
\{ z_{i,a}, \bar z_{j,b} \} = 2 \ai \epsilon_i \delta_{i,j} \delta_{a,b}
\end{equation}
for $i,j=1,\dots,n; a,b=1,\dots,k$, and all other brackets vanish. 
Therefore, we quantize $z_{i,a}$ and $\bar z_{i,a}$ by assigning
\begin{equation}
\label{e:canonical quantization2_k complex}
 \widehat{z}_{i,a} = z_{i,a}, \quad
 \widehat{\bar z}_{i,a} = -2  \epsilon_i {\pd_{z_{i,a}}},
\end{equation}
so that the nontrivial commutators are given by
\begin{equation}
\label{e:commutation relation2_k complex}
 [ \,\widehat{\vphantom{\bar z}z}_{i,a}, \,\widehat{\bar z}_{j,b} \,] = 2  \epsilon_i \delta_{i,j} \delta_{a,b}.
\end{equation}

Let $V'^k$ denote the direct sum of $k$ copies of $V'$, with $V'$ as in \eqref{e:Lagrangian2}.
Since $V'^k$ can be identified with $\Mat{n \times k}(\C)$, we write an element of $V'^k$ as $z=(z_{i,a})_{i=1,\dots,n;a=1,\dots,k}$.
Note then that $\GL{k}$ acts on $V'^k$ by matrix multiplication on the right, 
and hence acts on $\PVprimek$ by right regular representation, which we denote also by $\rho$ as in \eqref{e:right regular rep. of G'}.
Let $\PVprimek = \C[z_{i,a} ; i=1,\dots,n,a=1,\dots,k]$ be the algebra of complex polynomial functions on $V'^k$,
and $\PD(V'^k)$ the ring of polynomial coefficient differential operators on $V'^k$.

The quantized moment map $\widehat \mu$ is also given by the same formula as \eqref{e:quantized moment map2}:
\begin{equation*}
 \widehat \mu = \ai z \, \tp \pd_z.
\end{equation*}  
In this case, however, $z$ and $\pd_z$ are $n \times k$-matrices 
whose $(i,a)$-th entries are the multiplication operator $z_{i,a}$ and the differential operator $\pd_{z_{i,a}}$
for $i=1,\dots,n$ and $a=1,\dots,k$, respectively.

%
%
\begin{corollary}
\label{t:pi is a hom (gl,GL)}
 For $X \in \g=\gl_{n}$, set $\pi(X)=\ai \<\widehat{\mu}, X\>$.
 Then the map
 \[
 \pi: \g \to \PD(V'^k)
 \]
 is a Lie algebra homomorphism.
 In terms of the basis $\{ E_{i,j} \}$ for $\g$, it is given by
 \begin{equation}
 \label{e:explicit form of pi2_k}
 \pi(E_{i,j}) = - \sum_{a=1}^k z_{j,a} \pd_{z_{i,a}}
 \end{equation}
 for $i,j=1,\dots,n$.
 Moreover, $\pi(X)$ commutes with the action of the complex general linear group $\GL{k}$, 
 i.e., $\pi(X) \in \PD(V'^k)^{\GL{k}}$ for all $X \in \g$.
\end{corollary}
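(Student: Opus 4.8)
The plan is to follow exactly the template already established in the proof of Corollary \ref{t:pi is a hom (sp,O)}, with the obvious notational changes. The homomorphism property of $\pi$ and the explicit formula \eqref{e:explicit form of pi2_k} come for free: the same Hamiltonian/Poisson-bracket argument as in Theorem \ref{t:pi is a hom sp} applies verbatim (the moment map $\mu$ on $W^k$ has the same closed form as on $W$, the Poisson brackets \eqref{e:Poisson bracket2_k complex} differ from the $k=1$ case only by the Kronecker $\delta_{a,b}$, and the commutators among the $\widehat z_{i,a}, \widehat{\bar z}_{i,a}$ are central in $\PD(V'^k)$). So the only substantive point is the last sentence: $\pi(X) \in \PD(V'^k)^{\GL{k}}$ for all $X \in \gl_n$.

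For that I would first record, in analogy with Lemma \ref{l:adjoint of rho(G')}, how $\rho(g)$ conjugates the generators of $\PD(V'^k)$. Writing $z_i = (z_{i,1},\dots,z_{i,k})$ and $\pd_{z_i} = (\pd_{z_{i,1}},\dots,\pd_{z_{i,k}})$ as row vectors, the same computation as in Lemma \ref{l:adjoint of rho(G')} (with $x$ replaced by $z$) gives
\[
 \mAd_{\rho(g)^{-1}} z_i = z_i\, g^{-1}, \qquad \mAd_{\rho(g)^{-1}} \pd_{z_i} = \pd_{z_i}\, \tp g
\]
for every $g \in \GL{k}$. Stacking these over $i=1,\dots,n$ yields, in block form,
\[
 \mAd_{\rho(g)^{-1}}\! z = z\, g^{-1}, \qquad \mAd_{\rho(g)^{-1}}\! \pd_z = \pd_z\, \tp g,
\]
where now $z=\tp{[z_1,\dots,z_n]}$ and $\pd_z=\tp{[\pd_{z_1},\dots,\pd_{z_n}]}$ are $n\times k$ matrices of operators.

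Then I would simply apply $\mAd_{\rho(g)^{-1}}$ to $\widehat\mu = \ai\, z\,\tp{\pd_z}$ and observe the cancellation:
\[
 \mAd_{\rho(g)^{-1}} \widehat\mu
   = \ai\, (\mAd_{\rho(g)^{-1}} z)\, \tp{(\mAd_{\rho(g)^{-1}} \pd_z)}
   = \ai\, (z\, g^{-1})\, \tp{(\pd_z\, \tp g)}
   = \ai\, z\, g^{-1} g\, \tp{\pd_z}
   = \widehat\mu .
\]
Here I am using that $\mAd_{\rho(g)^{-1}}$ is an algebra automorphism, so it distributes over the matrix product $z\,\tp{\pd_z}$ entrywise, exactly as in the displayed chain of equalities in the proof of Corollary \ref{t:pi is a hom (sp,O)}. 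Since $\pi(X) = \ai\,\< \widehat\mu, X\> = \tfrac{\ai}{2}\trace{\widehat\mu X}$ (or the analogous pairing without the $\tfrac12$ for $\gl_n$) is built from the entries of $\widehat\mu$ with constant coefficients, $\mAd_{\rho(g)^{-1}}\pi(X) = \pi(X)$ for all $g \in \GL{k}$, i.e. $\pi(X)\in\PD(V'^k)^{\GL{k}}$.

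The computation is entirely routine; the only place to be careful is bookkeeping of transposes and of left-versus-right actions — in particular that conjugation by $\rho(g)^{-1}$ turns right multiplication on $V'^k$ into right multiplication of $z$ by $g^{-1}$ and of $\pd_z$ by $\tp g$, so that the two factors absorb $g^{-1}g = 1_k$ rather than, say, $\tp g\,g$. (Contrast the $\Sp$ case, where one needed $g\in\rmO_k$ precisely so that $\tp g\, g = 1_k$; here no constraint on $g$ is needed because the matching factor already supplies $g^{-1}$.) This asymmetry between the $z$-block and the $\pd_z$-block is the one conceptual point worth flagging, and it is exactly what makes the invariance hold for the full group $\GL{k}$ rather than only for an orthogonal subgroup.
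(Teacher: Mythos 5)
Your proof is correct and follows essentially the same route as the paper: appeal to Theorem \ref{t:pi is a hom sp} for the homomorphism property, invoke Lemma \ref{l:adjoint of rho(G')} to get $\mAd_{\rho(g)^{-1}} z = z g^{-1}$ and $\mAd_{\rho(g)^{-1}} \pd_z = \pd_z \tp g$, and observe the cancellation $g^{-1} g = 1_k$ in $\widehat\mu = \ai\, z\, \tp{\pd_z}$. The closing remark contrasting the $\GL{k}$-invariance here with the orthogonality constraint needed in the $\Sp$ case is a correct and useful observation, though not part of the paper's argument.
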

\begin{proof}
The first statement that $\pi$ is a Lie algebra homomorphism can be shown as in the proof of Theorem \ref{t:pi is a hom sp}.
It remains to show that $\widehat \mu$ commutes with the action of $\GL{k}$, 
which can be done in the following way.
By Lemma \ref{l:adjoint of rho(G')}, one obtains that
\begin{equation*}
 \mAd_{\rho(g)^{-1}} z = z g^{-1}   \quad \text{and} \quad
 \mAd_{\rho(g)^{-1}} \pd_z = \pd_z \tp g,
\end{equation*} 
from which it follows that
\[
 \mAd_{\rho(g)^{-1}}(z \tp \pd_z) 
     = ( \mAd_{\rho(g)^{-1}} z ) \tp{( \mAd_{\rho(g)^{-1}} \pd_z )} 
     = z g^{-1} g \tp \pd_z = z \tp \pd_z.
\]
This completes the proof.
\end{proof}
Similarly to the case of $\Sp(n,\R)$, 
it is well known that the irreducible decomposition of $\PVk$ under the joint action of $(\gl_n,\GL{k})$ is given by
\begin{equation}
 \PVprimek \simeq \sum_{ {\sigma \in \widehat{\GL{}}_k, \, L(\sigma) \ne \{ 0 \} } } L(\sigma) \otimes V_\sigma,
\end{equation}
where $V_\sigma$ is a representative of the class $\sigma \in \widehat{\GL{}}_k$, 
the set of all equivalence classes of the finite-dimensional irreducible representation of $\GL{k}$,
and $L(\sigma):=\operatorname{Hom}_{\,\GL{k}}(V_\sigma, \PVprimek)$ which is a finite-dimensional irreducible representation of $\gl_n$.
It is also well known that the action $\pi$ restricted to $\mathfrak k$ lifts to the complexification $K_\C$ of the maximal compact subgroup $K$ of $G=\U(p,q)$, 
which implies that $L(\sigma)$ is an irreducible $(\g,K_\C)$-module.

\section{Reductive dual pair $(\mathfrak{o}^*(2 n), \Sp_k)$}
\label{s:O-Sp}

In this section,
let $G$ denote the linear Lie group defined by
\begin{align*}
  \rmO^*(2 n) &= \{ g \in \U(n,n); \tp{g} S g = S \} \\
            &= \{ g \in \GL{2n}(\C); g^* I_{n,n} g = I_{n,n}, \tp{g} S g = S\},
\end{align*}
where $S$ denotes the nondegenerate symmetric matrix 
$\left[ \begin{smallmatrix}  & 1_n \\[2pt] 1_n &  \end{smallmatrix} \right]$
of size $2n \times 2n$.
Set $\g_0=\mathfrak{o}^*(2n)$, the Lie algebra of $G$, and take a basis for $\g_0$ as follows:
\begin{equation}
\label{e:basis for o*}
\begin{aligned}
 X_{i,j}^c &= E_{i,j}-E_{j,i}+E_{\ibar,\jbar}-E_{\jbar,\ibar}       &   &(1 \leqsl i < j \leqsl n), \\
 Y_{i,j}^c &= \ai (E_{i,j}+E_{j,i}-E_{\ibar,\jbar}-E_{\jbar,\ibar} )       &   &(1 \leqsl i \leqsl j \leqsl n), \\
 X_{i,j}^n &= E_{i,\jbar}-E_{j,\ibar}-E_{\ibar,j}+E_{\jbar,i}   &    & (1 \leqsl i < j \leqsl n), \\
 Y_{i,j}^n &= \ai (E_{i,\jbar}-E_{j,\ibar}+E_{\ibar,j}-E_{\jbar,i})   &    & (1 \leqsl i < j \leqsl n),
\end{aligned}
\end{equation}
where $E_{i,j}$ denotes the matrix unit of size $2 n \times 2 n$.
The complexified Lie algebra $\mathfrak{o}_{2n}$ of $\g_0=\mathfrak{o}^*(2n)$ is realized as
\begin{equation}
 \mathfrak{o}_{2n} = \{ X \in \Mat{2n}(\C); \tp{X} S + S X = O \}
\end{equation}
in this section, which we will denote by $\g$ below. 
It has a basis
\begin{equation}
\label{e:basis for o}
\begin{aligned}
 X_{i,j}^0 &= E_{i,j}-E_{\jbar,\ibar}   &    & (1 \leqsl i,j \leqsl n), \\
 X_{i,j}^+ &= E_{i,\jbar}-E_{j,\ibar}   &    & (1 \leqsl i < j \leqsl n), \\
 X_{i,j}^- &= E_{\jbar,i}-E_{\ibar,j}   &    & (1 \leqsl i < j \leqsl n).
\end{aligned}
\end{equation}

\subsection{}

Let $W=(\C^{2n})_\R$ and $\omega = \impart H$, where $H: \C^{2n} \times \C^{2n} \to \C$ is the Hermitian form given by
\[
 H(u,v) = u^* I_{n,n} v  \quad (u,v \in \C^{2n}).
\]
Namely, we consider the case we have discussed in \S \ref{s:GL-GL} with $p=q=n$. 
Note in particular that $\omega$ can be written as
\begin{equation}
\label{e:symplectic form3}
 \omega=\sum_{j=1}^{n} (\dd x_j \wedge \dd y_j - \dd x_{\jbar} \wedge \dd y_{\jbar} )
\end{equation}
at $v=x+\ai y \in W$ with $x=\tp{(x_1,\dots,x_{2n})},\, y=\tp{(y_1,\dots,y_{2n})} \in \R^{2n}$.
Then $(W,\omega)$ is a symplectic $G$-vector space, as above.

%
%
\begin{remarks}
\label{r:quaternionic vector space}
(i)\;
There is another realization of the Lie group $\rmO^*(2n)$ as a group consisting of the complex orthogonal matrices.
Namely,
\begin{equation*}
 \rmO^*(2n) = \{ g \in \GL{2n}; \tp g g=1, \tp g J_n g=J_n \};
\end{equation*}
we temporarily denote this realization of $\rmO^*(2n)$ by $G^\gamma$,
because the former realization $G$ is isomorphic to $G^\gamma$ by the correspondence
\(
 G \ni g \mapsto \gamma g \gamma^{-1} \in G^\gamma
\)
with $\gamma=\frac1{\sqrt 2}\left[\begin{smallmatrix} 1 & 1 \\[2pt] \ai & -\ai \end{smallmatrix}\right] \in \U(2n)$ (cf.~\cite{Helgason78}).

Let us consider the quaternionic vector space 
\[
 \H^n:=\left\{ v=\tp{(v_1,\dots,v_n)}; v_i \in \H \;(i=1,\dots,n) \right\},
\]
where $\H=\{ a + b \, \mathbf{i} + c \, \mathbf{j} + d \, \mathbf{k}; a,b,c,d \in \R \}$ 
denotes the skew-field of quaternions.
We regard $\H^n$ as a \textit{right} \H-vector space. 
If we identify $\mathbf{i} \in \H$ with $\ai \in \C$, then $\H^n$  is isomorphic to $\C^{2n}$ by the map
\begin{equation}
\label{e:iso H^n and C^{2n}}
\phi_1 : \H^n \to \C^{2n}, \quad 
  v = v' + \mathbf{j} \, v'' \mapsto \begin{bmatrix} v' \\ v'' \end{bmatrix}
     \qquad (v',v'' \in \C^n),
\end{equation}
which is in fact a $\C$-isomorphism.
Then $G^\gamma$ is characterized as the group consisting of $\H$-linear transformations on $\H^n$
that preserve the quaternionic skew-Hermitian form $C$ given by
\begin{equation}
 C(u,v):= u^* \mathbf{j}\, v \qquad  (u,v \in \H^n)
\end{equation}
(see \cite{GW10} for details).

(ii)\;
There is another identification of $\H^n$ with a \C-vector space.
Namely, there is an isomorphism of $\H^n$ onto $\Mat{n \times 2}(\C)$ given by
\begin{equation}
\label{e:iso H^n and C^{n times 2}}
\phi_2 : \H^n \to  \Mat{n \times 2}(\C), \quad v = v' + v'' \mathbf{j} \mapsto \left[ v', \;  v'' \right].
\end{equation}
In this case, however, $\H^n$ is regarded as a {\em left} \H-vector space, 
and the map $\phi_2$ is a $\C$-isomorphism in this sense.
Since $\mathbf{j} \, v'' =\bar{v}'' \mathbf{j}$ for $v'' \in \C^n$, one sees that
\begin{equation}
 (\phi_2 \circ \phi_1^{-1}) (\begin{bmatrix} v' \\ v'' \end{bmatrix}) 
  = \left[ v', \;  \bar v'' \right].
\end{equation} 
Note that $\phi_2 \circ \phi_1^{-1}$ is an \R-isomorphism from $\C^{2n}$ onto $\Mat{n \times 2}(\C)$.

More generally, let us consider $(\H^n)^k$, the direct sum of $k$ copies of $\H^n$, which we regard as a left \H-vector space as above.
Then the multiplication on $(\H^n)^k$ on the right by an element of $\Mat{k}(\H)$, say, $a + b\, {\mathbf j}$ with $a,b \in \Mat{k}(\C)$, 
corresponds to the multiplication on $\Mat{n \times 2k}(\C)$ on the right by the complex $2k \times 2k$-matrix 
$\left[\begin{smallmatrix} a & b \\[2pt] -\bar b & \bar a \end{smallmatrix}\right]$.
\end{remarks}

%
%
\begin{lemma}
\label{l:VF o*}
The vector fields on $W$ generated by the basis \eqref{e:basis for o*} for $\g_0=\mathfrak{o}^*(2n)$ in the sense of \eqref{e:vector field} are given by
\begin{equation}
\label{e:VF o*}
\begin{aligned}
 (X_{i,j}^c)_W &= - x_j \pd_{x_i} - y_j \pd_{y_i} + x_i \pd_{x_j} + y_i \pd_{y_j}
                  - x_{\jbar} \pd_{x_{\ibar}} - y_{\jbar} \pd_{y_{\ibar}} + x_{\ibar} \pd_{x_{\jbar}} + y_{\ibar} \pd_{y_{\jbar}},  \\
 (Y_{i,j}^c)_W &=  y_j \pd_{x_i} + y_i \pd_{x_j} - y_{\jbar} \pd_{x_{\ibar}} - y_{\ibar} \pd_{x_{\jbar}}
                  - x_j \pd_{y_i} - x_i \pd_{y_j} + x_{\jbar} \pd_{y_{\ibar}} + x_{\ibar} \pd_{y_{\jbar}}, \\
 (X_{i,j}^n)_W &= - x_{\jbar} \pd_{x_i} + x_{\ibar} \pd_{x_j} + x_j \pd_{x_{\ibar}} - x_i \pd_{x_{\jbar}}
                   - y_{\jbar} \pd_{y_i} + y_{\ibar} \pd_{y_j} + y_j \pd_{y_{\ibar}} - y_i \pd_{y_{\jbar}},  \\
 (Y_{i,j}^n)_W &=  y_{\jbar} \pd_{x_i} - y_{\ibar} \pd_{x_j} + y_j \pd_{x_{\ibar}} - y_i \pd_{x_{\jbar}}
                 - x_{\jbar} \pd_{y_i} + x_{\ibar} \pd_{y_j} - x_j \pd_{y_{\ibar}} + x_i \pd_{y_{\jbar}}.
\end{aligned}
\end{equation}
\end{lemma}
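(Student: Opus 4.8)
The strategy is the same direct computation used for Lemmas \ref{l:VF sp} and \ref{l:VF unitary}. Since $G=\rmO^*(2n)$ acts on $W=(\C^{2n})_\R$ as the restriction of matrix multiplication on $\C^{2n}$, for $X \in \g_0 \subset \gl_{2n}(\C)$ and $v \in W$ we have $\exp(-tX).v = \exp(-tX)\,v$, and hence by \eqref{e:vector field}
\[
 X_W(v) = \left.\frac{\dd}{\dd t}\right|_{t=0} \exp(-tX)\,v = - X v ,
\]
the linear vector field on $W$ determined by the matrix $-X$.

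It then remains to translate the matrix action into the real coordinate vector fields via the identification $e_j \leftrightarrow \pd_{x_j}$, $\ai e_j \leftrightarrow \pd_{y_j}$. Writing $v = x + \ai y$ with $x,y \in \R^{2n}$, for a matrix unit $E_{k,l}$ one has $E_{k,l} v = v_l e_k = (x_l + \ai y_l)\, e_k$ and $\ai E_{k,l} v = (-y_l + \ai x_l)\, e_k$, so that the vector fields generated by $E_{k,l}$ and by $\ai E_{k,l}$ are
\[
 (E_{k,l})_W = -\bigl( x_l \pd_{x_k} + y_l \pd_{y_k} \bigr), \qquad (\ai E_{k,l})_W = y_l \pd_{x_k} - x_l \pd_{y_k} .
\]
Substituting the four families in \eqref{e:basis for o*} into these two rules and collecting terms — using that $X \mapsto X_W$ is $\R$-linear — yields the four displayed formulas directly.

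Alternatively, as remarked after \eqref{e:symplectic form3}, this is the case $p=q=n$ of \S\ref{s:GL-GL}, and each basis vector in \eqref{e:basis for o*} is an integral linear combination of two basis vectors from \eqref{e:basis for u(p,q)} taken with $p=q=n$ (for instance $X_{i,j}^{c}=(E_{i,j}-E_{j,i})+(E_{\ibar,\jbar}-E_{\jbar,\ibar})$ is a sum of two of the $X^{c}$-type generators there, and similarly for $Y^{c}$, $X^{n}$, $Y^{n}$); so the formulas also follow from Lemma \ref{l:VF unitary} by linearity. In either case there is no genuine obstacle: the computation is entirely mechanical, the only point deserving a little care being the bookkeeping of the complex structure — in particular the $x \leftrightarrow y$ swap and sign change produced by the factor $\ai$ in the $Y$-type generators.
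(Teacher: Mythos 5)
Your proof is correct and follows the same direct-computation route the paper implicitly intends (the paper states no proof for this lemma, and for the analogous Lemma \ref{l:VF sp} it merely says "It is an easy exercise"). Both the main derivation via $(E_{k,l})_W$ and $(\ai E_{k,l})_W$ together with $\R$-linearity, and the alternative reduction to Lemma \ref{l:VF unitary} with $p=q=n$, check out.
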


For a given $v=\left[ \!\begin{smallmatrix} v' \\ v'' \end{smallmatrix} \!\right] \in \C^{2n}$ with $v',v'' \in \C^n$,
we set $v_+ := (\phi_2 \circ \phi_1^{-1}) (v)=[v',\; \bar v''] \in \Mat{n \times 2}(\C)$ for brevity.
By Remarks \ref{r:quaternionic vector space} (ii), $\Sp(1)$ acts on $W$ on the right via the $\R$-isomorphism $\phi_2 \circ \phi_1^{-1}$.

%
%
\begin{proposition}
\label{p:moment map3}
Let $(W,\omega)$ be as above and $G=\rmO^*(2n)$.
Then the moment map $\mu: W \to \g_0^* \simeq \g_0$ is given by
\begin{subequations}
\label{e:moment map3 closed}
\begin{align}
\mu(v) &= -\frac{\ai}2 \left( v \, v^* I_{n,n} - S \tp{(v \, v^* I_{n,n})} S \right)
\label{e:moment map3 closed column}           \\
       &= -\frac{\ai}2 
           \begin{bmatrix}
	      v_+ v_+^*  & - v_+ J_1 \tp v_+   \\
              - \bar v_+ J_1 v_+^* & - \bar v_+ \tp v_+
	   \end{bmatrix}
\label{e:moment map3 closed mat}
\end{align}
\end{subequations}
for $v=x + \ai y \in W$ with $x=\tp{(x_1,\dots,x_{2n})}, y=\tp{(y_1,\dots,y_{2n})} \in \R^{2n}$.
In particular, $\mu$ is $G$-equivariant and is $\Sp(1)$-invariant.
\end{proposition}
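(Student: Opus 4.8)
The plan is to follow exactly the same three-step scheme used in the proofs of Propositions \ref{p:moment map Sp} and \ref{p:moment map2}: first compute the pairings $\<\mu,X\>$ against the basis \eqref{e:basis for o*} of $\g_0=\mathfrak{o}^*(2n)$ by using the defining relation $\dd\<\mu,X\>=\iota(X_W)\omega$ together with the explicit vector fields from Lemma \ref{l:VF o*} and the form of $\omega$ in \eqref{e:symplectic form3}; second, reassemble $\mu(v)=\sum_\alpha \<\mu,X_\alpha\>X_\alpha^\vee$ using the dual basis with respect to $B$ and recognize the result as the right-hand side of \eqref{e:moment map3 closed column}; third, verify $G$-equivariance and $\Sp(1)$-invariance directly.

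First I would carry out the contraction computation. For each of the four families $X_{i,j}^c, Y_{i,j}^c, X_{i,j}^n, Y_{i,j}^n$, plugging the vector field from \eqref{e:VF o*} into $\omega=\sum_j(\dd x_j\wedge\dd y_j-\dd x_{\jbar}\wedge\dd y_{\jbar})$ produces an exact $1$-form, and one reads off $\<\mu,X\>$ as the corresponding quadratic polynomial in the real coordinates. By the same bookkeeping that passed from \eqref{e:moment map2 real} to \eqref{e:moment map2 complex} in \S\ref{s:GL-GL} — indeed this is the $p=q=n$ specialization of that setting, further cut down to the subalgebra $\mathfrak{o}^*(2n)\subset\u(n,n)$ — I would rewrite each pairing in the complex coordinates $z_j=x_j+\ai y_j$ and their conjugates. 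This is the step I expect to be the main obstacle: not because any individual calculation is deep, but because of the sheer number of index ranges (barred versus unbarred, $i<j$ versus $i\le j$) and the sign from the $-\dd x_{\jbar}\wedge\dd y_{\jbar}$ block, so care is needed to keep the normalizations of $X_{i,j}^\pm$-type generators (the factors $2^{-\delta_{ij}}$ in the dual basis, as in the $\Sp$ computation) correct.

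Next, summing $\<\mu,X_\alpha\>\,X_\alpha^\vee$ over the basis \eqref{e:basis for o*} and collecting matrix units, I would identify the outcome with $-\tfrac{\ai}2\bigl(v\,v^*I_{n,n}-S\,\tp{(v\,v^*I_{n,n})}\,S\bigr)$; the second term is exactly the correction that projects the $\u(n,n)$-moment map $-\tfrac{\ai}2 v v^* I_{n,n}$ of Proposition \ref{p:moment map2} onto $\mathfrak{o}_{2n}=\{X:\tp X S+SX=O\}$, which makes \eqref{e:moment map3 closed column} transparent. To obtain the matrix form \eqref{e:moment map3 closed mat} I would write $v=\tp{[\tp{v'},\tp{v''}]}$, use $v_+=[v',\bar v'']$, and expand both $v v^* I_{n,n}$ and $S\tp{(vv^*I_{n,n})}S$ in the $n\times n$ block decomposition, checking each of the four blocks against $v_+v_+^*$, $-v_+J_1\tp v_+$, $-\bar v_+J_1 v_+^*$, $-\bar v_+\tp v_+$; this is a routine block-matrix identity once one notes $\tp{(\bar v'')}=v''{}^*$ and that $J_1=\left[\begin{smallmatrix}&1\\-1&\end{smallmatrix}\right]$ supplies the antisymmetrization in the two columns of $v_+$.

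Finally, $G$-equivariance follows verbatim as in Propositions \ref{p:moment map Sp} and \ref{p:moment map2}: since $g^*I_{n,n}=I_{n,n}g^{-1}$ and $\tp g S g=S$ for $g\in\rmO^*(2n)$, each of the two terms in \eqref{e:moment map3 closed column} transforms by $\Ad(g)$, hence so does $\mu$. For $\Sp(1)$-invariance I would use the description from Remarks \ref{r:quaternionic vector space}(ii): right multiplication by a unit quaternion $a+b\,\mathbf j$ corresponds on $\Mat{n\times2}(\C)$ to right multiplication of $v_+$ by $\left[\begin{smallmatrix}a&b\\-\bar b&\bar a\end{smallmatrix}\right]\in\Sp(1)\subset\GL2(\C)$, which is unitary and symplectic for $J_1$; then $v_+v_+^*$ is unchanged and $v_+J_1\tp v_+$ is unchanged because such a matrix preserves both the standard Hermitian form and $J_1$, so every block of \eqref{e:moment map3 closed mat} is fixed. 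This completes the proof.
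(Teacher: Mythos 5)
Your proposal is correct and follows essentially the same route as the paper's own proof: compute $\<\mu,X\>$ via the contraction $\iota(X_W)\omega$ and Lemma \ref{l:VF o*}, pass to the complex coordinates $z_j,\bar z_j$, reassemble $\mu=\sum\<\mu,X_\alpha\>X_\alpha^\vee$ into the closed form $-\tfrac{\ai}2(vv^*I_{n,n}-S\tp{(vv^*I_{n,n})}S)$, rewrite in blocks using $v_+$, and then verify equivariance from $g^*I_{n,n}=I_{n,n}g^{-1}$ and $\tp g S=Sg^{-1}$ and $\Sp(1)$-invariance from the block form. The only genuine additions over the paper are your interpretive remark that the second term is the projection onto $\o_{2n}$ of the $\u(n,n)$-moment map (a nice sanity check, and the factor-of-$2$ discrepancy between the trace forms in \eqref{e:trace form} does in fact cancel the $\tfrac12$ in the orthogonal projection, so the heuristic is exact) and your slightly more explicit argument for $\Sp(1)$-invariance, where the paper simply declares it immediate from \eqref{e:moment map3 closed mat}.
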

\begin{proof}
It follows from Lemma \ref{l:VF o*} that
\begin{align}
\label{e:moment map3 real}
 \< \mu, X \>
 &=\begin{cases}
    x_i y_j - x_j y_i - x_{\ibar} y_{\jbar} + x_{\jbar} y_{\ibar}    &\text{if} \quad X=X_{i,j}^c ; \\
    x_i x_j + x_{\ibar} x_{\jbar} + y_i y_j + y_{\ibar} y_{\jbar}    &\text{if} \quad X=Y_{i,j}^c ; \\
    x_i y_{\jbar} - x_j y_{\ibar} + x_{\ibar} y_j - x_{\jbar} y_i    &\text{if} \quad X=X_{i,j}^n ; \\
    x_i x_{\jbar} - x_j x_{\ibar} - y_j y_{\ibar} + y_i y_{\jbar}    &\text{if} \quad X=Y_{i,j}^n,
   \end{cases} 
     \\
\intertext{which can be rewritten in terms of complex coordinates defined by $z_i:=x_i + \ai y_i$, $i=1,\dots,2n$, and their complex conjugates as}
 \< \mu, X \>
 &=\begin{cases}
           -  \frac{\ai}2 ( \bar z_i z_j - \bar z_j z_i - \bar z_{\ibar} z_{\jbar} + \bar z_{\jbar} z_{\ibar} )   &\text{if} \quad X=X_{i,j}^c ; \\
 \hphantom{-} \frac12 ( \bar z_i z_j + \bar z_j z_i + \bar z_{\ibar} z_{\jbar} + \bar z_{\jbar} z_{\ibar} )       &\text{if} \quad X=Y_{i,j}^c ; \\
           -  \frac{\ai}2 ( \bar z_i z_{\jbar} - \bar z_j z_{\ibar} + \bar z_{\ibar} z_j - \bar z_{\jbar} z_i )   &\text{if} \quad X=X_{i,j}^n ; \\
 \hphantom{-} \frac12 ( \bar z_i z_{\jbar} - \bar z_j z_{\ibar} - \bar z_{\ibar} z_j + \bar z_{\jbar} z_i )       &\text{if} \quad X=Y_{i,j}^n.
   \end{cases} 
\end{align}
Thus, setting $v':=\tp{(z_1,\dots,z_n)}$ and $v'':=\tp{(z_{\bar 1},\dots,z_{\bar n})}$, one obtains that
\allowdisplaybreaks{
\begin{align*}
 \mu(v) &=\sum_{i < j} \<\mu, X_{i,j}^c \> (X_{i,j}^c)^\vee 
          + \sum_{i \leqsl j} \< \mu, Y_{i,j}^c \> (Y_{i,j}^c)^\vee 
          + \sum_{i < j} \< \mu, X_{i,j}^n \> (X_{i,j}^n)^\vee  
          + \sum_{i < j} \< \mu, Y_{i,j}^n \> (Y_{i,j}^n)^\vee  
                \\
     &= -\frac{\ai}2 \sum_{i,j=1}^n 
        \left(
                 (z_i \bar z_j + \bar z_{\ibar} z_{\jbar}) E_{i,j} - (\bar z_i z_j + z_{\ibar} \bar z_{\jbar}) E_{\ibar,\jbar}
                -(z_i \bar z_{\jbar} - \bar z_{\ibar} z_j) E_{i,\jbar} - (\bar z_i z_{\jbar} - z_{\ibar} \bar z_j) E_{\ibar,j}
        \right)
                \\
     &=-\frac{\ai}2 
           \begin{bmatrix}
              v' \tp{\bar v'} + \bar v'' \tp v'' &  -v' \tp{\bar v''} + \bar v'' \tp v' \\
              -\bar v' \tp v'' + v'' \tp{\bar v'} & - \bar v' \tp v' - v'' \tp{\bar v''}      
	   \end{bmatrix}
     = -\frac{\ai}2 \left( 
           \begin{bmatrix} v' \\ v'' \end{bmatrix} \tp{\left( \bar v', \;  - \bar v'' \right)}
         + \begin{bmatrix} \bar v'' \\ - \bar v' \end{bmatrix} \tp{\left( v'', \; - v' \right)}
                     \right)
               \\
     &= -\frac{\ai}2 \left( v \, v^* I_{n,n} - S \tp{(v \, v^* I_{n,n})} S \right).
\end{align*}
}%
Rewriting \eqref{e:moment map3 closed column} one obtains the second expression \eqref{e:moment map3 closed mat}.

The $\Sp(1)$-invariance of $\mu$ immediately follows from $\eqref{e:moment map3 closed mat}$, 
and the $G$-equivariance can be verified in the following way.
If $g \in G$, then
\[
 \mu(g v) = - \frac{\ai}2 \left( g v v^* g^* I_{n,n} - S \tp{( g v v^* g^* I_{n,n})} S \right)
     = - \frac{\ai}2 \left( g v v^* I_{n,n} g^{-1} -S \tp{(g v v^* I_{n,n} g^{-1})} S \right)
\]
since $g^* I_{n,n} = I_{n,n} g^{-1}$.
The second term in the brace of the right-hand side equals
\[
 S \tp{g^{-1}} \tp{(v v^* I_{n,n})} \tp g S = g S \tp{(v v^* I_{n,n})} S g^{-1}
\]
since $\tp g S = S g^{-1}$.
Thus,
\[
 \mu(g v) = -\frac{\ai}2 \left( g v v^* I_{n,n} g^{-1} - g S \tp{( v v^* I_{n,n})} S g^{-1} \right)
   =\Ad(g) \mu(v).
\]
This completes the proof.
\end{proof}

It follows from \eqref{e:symplectic form3} that the Poisson brackets among $x_i, y_i$, $i=1,\dots,2n$, are given by
\begin{equation}
 \label{e:Poisson bracket3 real}
 \{ x_i, y_j \} = - \delta_{i,j}, \quad \{ x_{\ibar}, y_{\jbar} \} = \delta_{i,j}
\end{equation}
for $i,j = 1,\dots,n$, and all other brackets vanish.
In terms of complex coordinates $z_j=x_j + \ai y_j$ for $j=1,\dots,2n$ and their conjugates,
it follows from \eqref{e:Poisson bracket3 real} that the Poisson brackets among them are given by
\begin{equation}
\label{e:Poisson bracket3 complex}
 \{ z_i, \bar z_j \} = \{ \bar z_{\ibar}, z_{\jbar} \} = 2 \ai \delta_{i,j}
\end{equation}
for $i,j=1,\dots,n$ and all other brackets vanish, as in \eqref{e:Poisson bracket2 complex}.
In view of \eqref{e:Poisson bracket3 complex}, we quantize them by assigning
\begin{equation}
\begin{aligned}
\label{e:canonical quantization3 complex}
 \widehat z_i &= z_i,  \quad &  \widehat{\bar z}_i &= -2 \pd_{z_i}, \\
 \widehat{\bar z}_{\ibar} &= \bar z_{\ibar}, \quad & \widehat z_{\ibar} &= -2 \pd_{\bar z_{\ibar}}
\end{aligned}
\end{equation}
for $i=1,\dots,n$ so that the nontrivial commutators among the quantized operators are given by
\begin{equation}
\label{e:commutation relation3 complex}
 [ \,\widehat{{\vphantom{\bar z}} z}_i, \, \widehat{\bar z}_j \, ] 
 = [ \,\widehat{\bar z}_{\ibar}, \, \widehat{{\vphantom{\bar z}} z}_{\jbar} \, ] = 2 \delta_{i,j}
\end{equation}
for $i,j=1,\dots,n$.

Let $I$ denote a complex structure on $W$ defined by $e_j \mapsto \ai e_j$ and $\ai e_j \mapsto -e_j$ for $j=1,\dots,2n$.
Under the identification $e_j \leftrightarrow \pd_{x_j}$ and $\ai e_j \leftrightarrow \pd_{y_j}$,
the classical observables $z_j$ and $\bar z_j$ introduced above can be regarded as the coordinate functions on $W_\C$ 
with respect to the basis $\frac12(e_j - \ai I e_j)$ and $\frac12(e_j + \ai I e_j)$ respectively for $j=1,\dots,2n$.
Note that $\bar z_j$ is no longer the complex conjugate of $z_j$ since $x_i$ and $y_i$ are now complex functions.
Then the quantization \eqref{e:canonical quantization3 complex} corresponds to taking a complex Lagrangian subspace $V$ given by
\begin{equation}
\label{e:Lagrangian3}
 V=\left\< \tfrac12(e_j - \ai I e_j), \tfrac12(e_{\jbar} + \ai I e_{\jbar}); j=1,\dots,n \right\>_\C.
\end{equation}
For simplicity, we set $w_j := \bar z_{\jbar}$, $j=1,\dots,n$, 
and write an element of $V=\Mat{n \times 2}(\C)$ as $[z,\: w]$ with $z=\tp{(z_1,\dots,z_n)}$ and $w=\tp{(w_1,\dots,w_n)}$ in what follows.

Now, we quantize the moment map $\mu$ according to \eqref{e:canonical quantization3 complex}
using its first expression \eqref{e:moment map3 closed column} and denote the quantized moment map by $\widehat \mu$ as follows:
\begin{subequations}
\label{e:quantized moment map3}
\begin{align}
 \widehat \mu 
    &= -\frac{\ai}2 \Biggl( \,
      \begin{bmatrix} \,\widehat z_1  \\ \vdots \\ \,\widehat z_{\bar n} \end{bmatrix}
          ( \, \widehat{\bar z}_1, \dots, \widehat{\bar z}_{\bar n} ) \, I_{n,n}
      - S I_{n,n} \begin{bmatrix} \,\widehat{\bar z}_1 \\ \vdots \\ \,\widehat{\bar z}_{\bar n} \end{bmatrix} 
          ( \, \widehat z_1, \dots, \widehat z_{\bar n} ) \, S
                 \,  \Biggr)
        \label{e:quantized moment map3 1st}
                    \\
    &= -\frac{\ai}2 \biggl( \,
       \begin{bmatrix}  z \\[3pt] -2 \pd_w  \end{bmatrix}
       \left( -2 \tp\pd_z, \tp w \right)  I_{n,n} 
       - S I_{n,n}
         \begin{bmatrix} -2 \pd_z \\[3pt] w  \end{bmatrix}
       \left( \tp z,  -2 \tp \pd_w \right) S
                   \, \biggr)
             \notag  \\[5pt]
    &= \ai \begin{bmatrix}
	    z \tp \pd_z + w \tp \pd_w & \frac12(z \tp w - w \tp z)  \\[3pt]
            2(\pd_z \tp \pd_w - \pd_w \tp \pd_z)  & -(\pd_w \tp w + \pd_z \tp z)
	   \end{bmatrix}
        \label{e:quantized moment map3 2nd}
 \end{align}
 where $z=\tp{(z_1,\dots,z_n)}, w=\tp{(w_1,\dots,w_n)}, \pd_z= \tp{(\pd_{z_1}, \dots,\pd_{z_{n}})}$
 and $\pd_w=\tp{(\pd_{w_1},\dots,\pd_{w_n})}$.
 The quantization of the second expression \eqref{e:moment map3 closed mat}, i.e., 
 \begin{equation}
 \label{e:quantized moment map3 3rd}
 \widehat \mu = -\frac{\ai}2 \begin{bmatrix}
			       \widehat{v}_+ \tp{\,\widehat{\bar v}_+} & - \widehat{v}_+ J_1 \tp{\,\widehat{v}_+} \\[3pt]
                              - \widehat{\bar v}_+ J_1 \tp{\,\widehat{\bar v}_+} & - \widehat{\bar v}_+ \tp{\,\widehat{v}_+}
			     \end{bmatrix}
\end{equation}
\end{subequations}
produces the same result as \eqref{e:quantized moment map3 2nd},
where \( \widehat v_+=[z,w] \) and \( \widehat{\bar v}_+=[-2 \pd_z, -2 \pd_w] \).

Let $\PV$ denote the algebra of complex coefficient polynomials on $V$,
i.e., $\PV=\C[z_1,\dots,z_n,w_1,\dots,w_n]$,
and $\PD(V)$ the ring of polynomial coefficient differential operators on $V$.
Note that the complex symplectic group of rank one
\[
 \Sp_1 = \{ g \in \GL{2}; \tp g J_1 g = J_1 \}
\]
acts on $V$ by matrix multiplication on the right,
and hence on $\PV$ by right regular representation, which we denote by $\rho$, as in \eqref{e:right regular rep. of G'}.
The right-action of $\Sp_1$ on $V$ coincides with the one on $\Mat{n \times 2}(\C)$ mentioned in Remarks \ref{r:quaternionic vector space} (ii).
%
%
\begin{theorem}
\label{t:pi is a hom o}
 For $X \in \g=\mathfrak{o}_{2n}$, set $\pi(X) = \ai \<\widehat{\mu}, X\>$. 
 Then the map
 \[
 \pi: \g \to \PD(V)
 \]
 is a Lie algebra homomorphism.
 In terms of the basis \eqref{e:basis for o} for $\g$, it is given by 
 \begin{equation}
  \pi(X) = \begin{cases}
   - ( z_j \pd_{z_i} + w_j \pd_{w_i} + \delta_{i,j} )          &\text{if} \quad X=X_{i,j}^0 ; \\
   2 \, ( \pd_{z_i} \pd_{w_j} - \pd_{w_i} \pd_{z_j} )          &\text{if} \quad X=X_{i,j}^+ ; \\
   \frac12 \, ( z_j w_i - w_j z_i )                        &\text{if} \quad X=X_{i,j}^-.  
           \end{cases}
 \end{equation}
 Moreover, $\pi(X)$ commutes with the action of $\Sp_1$ i.e., $\pi(X) \in \PD(V)^{\Sp_1}$ for all $X \in \g$.
\end{theorem}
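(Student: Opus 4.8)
The plan is to prove the two assertions of the theorem separately, both along lines already established in the paper. For the first --- that $\pi$ is a Lie algebra homomorphism with the stated explicit form --- I would repeat the argument of Theorem~\ref{t:pi is a hom sp}: the moment map furnishes a Lie algebra homomorphism $X\mapsto H_X:=\langle\mu,X\rangle$ from $\g_0=\mathfrak{o}^*(2n)$ into $C^\infty(W)$ equipped with the Poisson bracket, each $H_X$ is quadratic in the coordinate functions (equivalently in the $z_j,\bar z_j$ of \eqref{e:canonical quantization3 complex}), and the commutators \eqref{e:commutation relation3 complex} among the quantized operators are central in $\PD(V)$; hence $[\widehat H_X,\widehat H_Y]=-\ai\widehat H_{[X,Y]}$, so $\pi=\ai\widehat H_\bullet$ is a homomorphism on $\g_0$, which one extends to $\g=\mathfrak{o}_{2n}$ by $\C$-linearity. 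The formulas for $\pi(X_{i,j}^0),\pi(X_{i,j}^+),\pi(X_{i,j}^-)$ are then read off from $\pi(X)=\frac{\ai}{2}\trace{\widehat\mu\,X}$ using \eqref{e:quantized moment map3 2nd} and the basis \eqref{e:basis for o}, a short entrywise computation.

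For the $\Sp_1$-invariance I would imitate Corollary~\ref{t:pi is a hom (sp,O)}, but using the factored expression \eqref{e:quantized moment map3 3rd} of $\widehat\mu$ in terms of $\widehat v_+=[z,w]$ and $\widehat{\bar v}_+=[-2\pd_z,-2\pd_w]$. Since $V\cong\Mat{n\times 2}(\C)$ carries the right action of $\GL{2}\supset\Sp_1$ on matrix coordinates, Lemma~\ref{l:adjoint of rho(G')} (with $k=2$) applies to $\widehat v_+$ and $\widehat{\bar v}_+$, which play the roles of $x$ and $\pd_x$, and gives
\[
\mAd_{\rho(g)^{-1}}\widehat v_+=\widehat v_+\,g^{-1},\qquad
\mAd_{\rho(g)^{-1}}\widehat{\bar v}_+=\widehat{\bar v}_+\,\tp g\qquad(g\in\GL{2}).
\]
Applying the algebra automorphism $\mAd_{\rho(g)^{-1}}$ blockwise to \eqref{e:quantized moment map3 3rd} (which fixes the constant matrix $J_1$ and commutes with matrix transposition), the four blocks pick up the respective inner factors $g^{-1}g$, $g^{-1}J_1\tp{(g^{-1})}$, $\tp g J_1 g$, and $\tp g\tp{(g^{-1})}$. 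For $g\in\Sp_1$, i.e.\ $\tp g J_1 g=J_1$, one has $g^{-1}g=\tp g\tp{(g^{-1})}=1_2$, and --- since $\Sp_1$ is stable under $g\mapsto\tp g$ and $g\mapsto g^{-1}$ --- also $g^{-1}J_1\tp{(g^{-1})}=J_1$; hence every block is unchanged, so $\mAd_{\rho(g)^{-1}}\widehat\mu=\widehat\mu$ for all $g\in\Sp_1$, and therefore $\rho(g)\,\pi(X)\,\rho(g)^{-1}=\frac{\ai}{2}\trace{(\mAd_{\rho(g)}\widehat\mu)\,X}=\pi(X)$ for every $X\in\g$, which is the assertion.

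The one step that needs genuine care --- and the likeliest place for a slip --- is the blockwise application of $\mAd_{\rho(g)^{-1}}$ to the factored form, because the operator entries of $\widehat v_+$ and $\widehat{\bar v}_+$ do not all commute; one must check that no factors are tacitly reordered. They are not: $\mAd_{\rho(g)^{-1}}$ is an algebra homomorphism, and the matrices $g,g^{-1}$ inserted by it have \emph{scalar} entries, so they slide freely through the matrix products and the cancellations go through. A safe but longer alternative would be to compute directly from \eqref{e:quantized moment map3 2nd} and the explicit action $[z,w]\mapsto[z,w]g$ of $\Sp_1$, checking the invariance of each block-entry of $\widehat\mu$ by hand; the factored argument is preferable precisely because it reduces everything to the single identity $\tp g J_1 g=J_1$.
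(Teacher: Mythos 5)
Your argument is correct and follows the same route as the paper: for the homomorphism and the explicit formulas you invoke the argument of Theorem~\ref{t:pi is a hom sp} and read entries off \eqref{e:quantized moment map3 2nd}, and for $\Sp_1$-invariance you use the factored form \eqref{e:quantized moment map3 3rd} together with Lemma~\ref{l:adjoint of rho(G')} and $\tp g J_1 g=J_1$. The only difference is that you make explicit the identification of the four inner factors $g^{-1}g$, $g^{-1}J_1\tp{(g^{-1})}$, $\tp g J_1 g$, $\tp g \tp{(g^{-1})}$ and spell out why $g^{-1}J_1\tp{(g^{-1})}=J_1$, whereas the paper compresses this into the single remark ``since $\tp g J_1 g=J_1$''.
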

\begin{proof}
It suffices to prove that $\pi(X)$ commutes with the right-action of $\Sp_1$.
For this, we use the second expression \eqref{e:quantized moment map3 3rd} of $\widehat \mu$.
It follows from Lemma \ref{l:adjoint of rho(G')} that
\begin{equation*}
 \mAd_{\rho(g)^{-1}} \widehat v_+ = \widehat v_+ g^{-1}   \quad \text{and} \quad
 \mAd_{\rho(g)^{-1}} \widehat{\bar v}_+ = \widehat{\bar v}_+ \tp g
\end{equation*}
for $g \in \GL{2}$. 
Therefore, if $g \in \Sp_1$ then one obtains
\allowdisplaybreaks{
\begin{align*}
 \mAd_{\rho(g)^{-1}} \widehat \mu 
   &= - \frac{\ai}2 \begin{bmatrix}
		     \widehat{v}_+ g^{-1} \tp{(\widehat{\bar v}_+ \tp g)} & -\widehat{v}_+ g^{-1} J_1 \tp{(\widehat{v}_+ g^{-1})}  \\[3pt]
		     - \widehat{\bar v}_+ \tp g J_1 \tp{(\widehat{\bar v}_+ \tp g)} & -\widehat{\bar v}_+ \tp g \tp{(\widehat{v}_+ g^{-1})} 
		    \end{bmatrix}
          \\
   &= - \frac{\ai}2  \begin{bmatrix}
		     \widehat{v}_+ \tp{\;\widehat{\bar v}_+} & -\widehat{v}_+ J_1 \tp{\;\widehat{v}_+}  \\[3pt]
		     - \widehat{\bar v}_+ J_1 \tp{\;\widehat{\bar v}_+} & -\widehat{\bar v}_+ \tp{\;\widehat{v}_+} 
		    \end{bmatrix}
     = \widehat \mu
\end{align*}
}%
since $\tp g J_1 g=J_1$.
This completes the proof.
\end{proof}

\subsection{}
Now let us consider $W^k$, the direct sum of $k$ copies of $W=(\C^{2n})_\R$, 
which we identify with $\Mat{2n \times k}(\C)$.
It is equipped with a symplectic form given by
\[
 \omega_k (u,v) = \impart \trace{u^* I_{n,n} v} \quad (u,v \in W^k),
\]
and is still acted on by $G=\rmO^*(2n)$ symplectically by matrix multiplication on the left.
Under the identification of $e_{i,a} \leftrightarrow \pd_{x_{i,a}}$ and $\ai e_{i,a} \leftrightarrow \pd_{y_{i,a}}$,
we write an element of $W^k$ as $v ={}^t[v_1, \dots, v_{2n}]$, 
where $v_i = x_i + \ai y_i$ are complex row vectors
with $x_i=(x_{i,1},\dots,x_{i,k})$ and $y_i=(y_{i,1},\dots,y_{i,k})$ being real row vectors of size $k$ for $i=1,\dots,2n$.
Then $\omega_k$ is given by
\begin{equation}
\label{e:symplectic form3_k}
 \omega_k 
     =\sum_{1 \leqsl i \leqsl n, 1 \leqsl a \leqsl k}  
          ( \dd x_{i,a} \wedge \dd y_{i,a} - \dd x_{\ibar,a} \wedge \dd y_{\ibar,a} )
\end{equation}
at $v={}^t[v_1,\dots,v_{2n}] \in \Mat{2n \times k}(\C)$.
Moreover, the isomorphisms $\phi_1$ and $\phi_2$ defined by \eqref{e:iso H^n and C^{2n}} and \eqref{e:iso H^n and C^{n times 2}} respectively
naturally extend to the one between $(\H^n)^k$ and $\Mat{2n \times k}(\C)$ and the one between $(\H^n)^k$ and $\Mat{n \times 2k}(\C)$ respectively, 
which we denote by the same symbols.
Then $\Sp(k)$ acts on $W^k$ on the right via the $\R$-isomorphism $\phi_2 \circ \phi_1^{-1}$, as above.

%
%
\begin{proposition}
Let $(W^k,\omega_k)$ be the symplectic $G$-vector space as above.
Then the moment map $\mu:W^k \to \g_0^* \simeq \g_0$ is given by the same formulae as \eqref{e:moment map3 closed}. 
Namely, for $v={}^t[v', \; v''] \in W^k$ with $v',v'' \in \Mat{n \times k}(\C)$,
\begin{equation}
\label{e:moment map3 closed k-direct sum}
\begin{aligned}
\mu(v) &= -\frac{\ai}2 \left( v \, v^* I_{n,n} - S \tp{(v \, v^* I_{n,n})} S \right)
          \\
       &= -\frac{\ai}2 
           \begin{bmatrix}
	      v_+ v_+^*  & - v_+ J_k \tp v_+   \\
              - \bar v_+ J_k v_+^* & - \bar v_+ \tp v_+
	   \end{bmatrix},
\end{aligned}
\end{equation}
where $v_+ = (\phi_2 \circ \phi_1^{-1} )(v) \in \Mat{n \times 2k}(\C)$.
In particular, $\mu$ is $G$-equivariant and is $\Sp(k)$-invariant.
\end{proposition}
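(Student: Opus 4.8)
The plan is to treat this proposition as the $k$-fold analogue of Proposition~\ref{p:moment map3}, carrying over its proof by exactly the same column-by-column vectorisation that was used to pass from Proposition~\ref{p:moment map Sp} to Proposition~\ref{p:moment map (Sp,O)}. First I would observe that, since $G=\rmO^*(2n)$ acts on $W^k=\Mat{2n \times k}(\C)$ by left matrix multiplication, it acts on the $k$ columns independently; hence the vector fields on $W^k$ generated by the basis \eqref{e:basis for o*} in the sense of \eqref{e:vector field} are given by the very same formulae as in Lemma~\ref{l:VF o*}, provided that $x_i, y_i, \pd_{x_i}, \pd_{y_i}$ are now read as row vectors of length $k$ and every product of two such symbols is read as the standard inner product of row vectors.

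Next, since $\omega_k$ in \eqref{e:symplectic form3_k} is literally a direct sum of $k$ copies of the rank-one form, the defining relation $\dd\,\<\mu,X\>=\iota(X_{W^k})\omega_k$ decouples into $k$ independent copies of the computation carried out in the proof of Proposition~\ref{p:moment map3}. Thus $\<\mu,X\>$ for $X$ in the basis \eqref{e:basis for o*} is obtained from \eqref{e:moment map3 real} by summing each monomial over the column index $a=1,\dots,k$, i.e.\ by replacing each $x_\star x_\star$, $x_\star y_\star$, \dots\ by the corresponding inner product. Assembling $\mu(v)=\sum_X \<\mu,X\>\,X^\vee$ and passing to the complex coordinates $z_{i,a}=x_{i,a}+\ai y_{i,a}$ exactly as in the rank-one case, one obtains that the matrix identity $\mu(v)=-\tfrac{\ai}{2}\bigl(v\,v^* I_{n,n}-S\,\tp{(v\,v^* I_{n,n})}\,S\bigr)$ holds verbatim for $v\in\Mat{2n \times k}(\C)$, because none of the matrix manipulations in the proof of Proposition~\ref{p:moment map3} used that $v$ was a column vector --- only that $v\,v^*$ is meaningful, which it is. The second expression in \eqref{e:moment map3 closed k-direct sum} then follows by the same rearrangement written in terms of $v_+=(\phi_2\circ\phi_1^{-1})(v)\in\Mat{n \times 2k}(\C)$; the only change from the rank-one formula is that $J_1$ is replaced by $J_k$, which is precisely what Remarks~\ref{r:quaternionic vector space}(ii) accounts for: right multiplication of $(\H^n)^k$ by $\mathbf j$ corresponds under $\phi_2\circ\phi_1^{-1}$ to right multiplication of $\Mat{n \times 2k}(\C)$ by $\left[\begin{smallmatrix} & 1_k \\[2pt] -1_k & \end{smallmatrix}\right]=J_k$.

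Finally I would verify the $G$-equivariance word for word as in Proposition~\ref{p:moment map3}, using only $g^* I_{n,n}=I_{n,n}g^{-1}$ and $\tp g S=S g^{-1}$, which do not involve $k$; and the $\Sp(k)$-invariance is then immediate from the second expression in \eqref{e:moment map3 closed k-direct sum} together with the defining properties of $\Sp(k)\subset\GL{2k}(\C)$, namely $g g^* = 1_{2k}$ and $\tp g J_k g = J_k$ (whence also $g J_k \tp g = J_k$), exactly as in the rank-one case. I do not expect a genuine obstacle here: the argument is a routine vectorisation, and the only point demanding care is the quaternionic bookkeeping that turns $J_1$ into the correct $2k\times 2k$ matrix $J_k$, which has already been set up in Remarks~\ref{r:quaternionic vector space}(ii).
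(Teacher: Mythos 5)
Your proposal is correct and follows essentially the same route as the paper: reinterpret the scalar coordinates in Lemma~\ref{l:VF o*} as row vectors of length $k$ with products as inner products, rerun the argument of Proposition~\ref{p:moment map3} verbatim (which only uses the matrix product $v\,v^*$), and read off $G$-equivariance and $\Sp(k)$-invariance from the two closed-form expressions. The paper compresses this into two sentences, but the underlying idea — a direct vectorisation of the rank-one computation — is identical to yours.
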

\begin{proof}
The vector fields on $W^k$ generated by the basis for $\g_0$ are given by the same formulae as \eqref{e:VF o*} in Lemma \ref{l:VF o*}, 
with the understanding that $x_i, y_i, \pd_{x_i}$ and $\pd_{y_i}$ are row vectors and the products stand for the inner product of row vectors.
Now, exactly the same argument as in Proposition \ref{p:moment map3} implies the proposition. 
\end{proof}

It follows from \eqref{e:symplectic form3_k} that 
the Poisson brackets among the real coordinate functions $x_{i,a}, y_{i,a}$ are given by
\begin{equation}
 \label{e:Poisson bracket3_k real}
 \{ x_{i,a}, y_{j,b} \} = - \delta_{i,j} \delta_{a,b}, 
     \quad 
 \{ x_{\ibar,a}, y_{\jbar,b} \} = \delta_{i,j} \delta_{a,b}
\end{equation}
and all other brackets vanish, 
and hence the nontrivial ones among the complex coordinate functions are given by
\begin{equation}
\label{e:Poisson bracket3_k complex}
 \{ z_{i,a}, \bar z_{j,b} \} = \{ \bar z_{\ibar,a}, z_{\jbar,b} \} = 2 \ai \delta_{i,j} \delta_{a,b}
\end{equation}
for $i,j=1,\dots,n$ and $a,b=1,\dots,k$.
Therefore, we quantize $z_{i,a}$ and $\bar z_{i,a}$ by assigning
\begin{equation}
\begin{aligned}
\label{e:canonical quantization3_k complex}
 \widehat z_{i,a} &= z_{i,a},  \quad &  \widehat{\bar z}_{i,a} &= -2 \pd_{z_{i,a}}, \\
 \widehat{\bar z}_{\ibar,a} &= \bar z_{\ibar,a}, \quad & \widehat z_{\ibar,a} &= -2 \pd_{\bar z_{\ibar,a}}
\end{aligned}
\end{equation}
so that the nontrivial commutators among the quantized operators are given by
\begin{equation}
\label{e:commutation relation3_k complex}
 [ \,\widehat{{\vphantom{\bar z}} z}_{i,a}, \, \widehat{\bar z}_{j,b} \, ] 
 = [ \,\widehat{\bar z}_{\ibar,a}, \, \widehat{{\vphantom{\bar z}} z}_{\jbar,b} \, ] = 2 \delta_{i,j} \delta_{a,b}
\end{equation}
for $i,j=1,\dots,n$ and $a,b=1,\dots,k$.

Let $V^k$ denote the direct sum of $k$ copies of $V$, with $V$ as in \eqref{e:Lagrangian3}.
Since $V^k$ can be identified with $\Mat{n \times 2k}(\C)$, 
we write an element of $V^k$ as $[z,w]$, where $z=(z_{i,a})$ and $w=(w_{i,a})$ are elements of $\Mat{n \times k}(\C)$,
and we set $w_{i,a}=\bar z_{\ibar,a}$ for $i=1,\dots,n$ and $a=1,\dots,k$ for simplicity, as above.
Let $\PVk = \C[z_{i,a}, w_{i,a} ; i=1,\dots,n,a=1,\dots,k]$ be the algebra of complex polynomial functions on $V^k$,
and $\PD(V^k)$ the ring of polynomial coefficient differential operators on $V^k$.
Then the complex symplectic group $\Sp_k$ acts on $V^k$ by matrix multiplication on the right,
and hence on $\PVk$ by right regular representation, which we denote by $\rho$, as usual.

The quantized moment map $\widehat \mu$ is given by the same formula as \eqref{e:quantized moment map3 2nd}:
\[
 \widehat \mu 
     = \ai \begin{bmatrix}
	    z \tp \pd_z + w \tp \pd_w & \frac12(z \tp w - w \tp z)  \\[3pt]
            2(\pd_z \tp \pd_w - \pd_w \tp \pd_z)  & -(\pd_w \tp w + \pd_z \tp z)
	   \end{bmatrix}.
\]
Here, $z$ (resp. $w$) and $\pd_z$ (resp. $\pd_w$) now denote $n \times k$-matrices 
whose $(i,a)$-th entries are the multiplication operator $z_{i,a}$ (resp. $w_{i,a}$) 
and the differential operator $\pd_{z_{i,a}}$ (resp. $\pd_{w_{i,a}}$) for $i=1,\dots,n$ and $a=1,\dots,k$.
%
%
\begin{corollary}
\label{t:pi is a hom (o,Sp)}
 For $X \in \g=\mathfrak{o}_{2n}$, set $\pi(X)=\ai \<\widehat{\mu}, X\>$.
 Then the map
 \[
 \pi: \g \to \PD(V^k)
 \]
 is a Lie algebra homomorphism.
 In terms of the basis \eqref{e:basis for o} for $\g$, it is given by
\begin{equation}
\label{e:explicit form of pi3_k}
  \pi(X) = \begin{cases}
   - \sum_{a=1}^k ( z_{j,a} \pd_{z_{i,a}} + w_{j,a} \pd_{w_{i,a}} + k \delta_{i,j} )          &\text{if} \quad X=X_{i,j}^0 ; \\
   2 \sum_{a=1}^k \, ( \pd_{z_{i,a}} \pd_{w_{j,a}} - \pd_{w_{i,a}} \pd_{z_{j,a}} )          &\text{if} \quad X=X_{i,j}^+ ; \\
   \frac12 \sum_{a=1}^k \, ( z_{j,a} w_{i,a} - w_{j,a} z_{i,a} )                        &\text{if} \quad X=X_{i,j}^-.  
           \end{cases}
\end{equation}
 Moreover, $\pi(X)$ commutes with the action of the complex symplectic group $\Sp_{k}$, 
 i.e., $\pi(X) \in \PD(V^k)^{\Sp_{k}}$ for all $X \in \g$.
\end{corollary}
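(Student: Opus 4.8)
The plan is to follow the template of Theorem~\ref{t:pi is a hom sp} and Corollary~\ref{t:pi is a hom (sp,O)}, upgrading the $\Sp_1$-invariance computation of Theorem~\ref{t:pi is a hom o} to the $k$-fold setting. That $\pi$ is a Lie algebra homomorphism follows word for word as in the proof of Theorem~\ref{t:pi is a hom sp}: the moment map of the preceding proposition induces a Lie algebra homomorphism $X \mapsto H_X := \< \mu, X \>$ from $\g_0 = \mathfrak{o}^*(2n)$ into $C^\infty(W^k)$ equipped with the Poisson bracket; each $H_X$ is homogeneous quadratic in the coordinate functions, the commutators \eqref{e:commutation relation3_k complex} among the quantized operators are central in $\PD(V^k)$, so the quantization principle \eqref{e:quantization principle} holds and $[\pi(X),\pi(Y)] = \pi([X,Y])$ on $\g_0$; one then extends by $\C$-linearity to $\g = \mathfrak{o}_{2n}$. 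The explicit formula \eqref{e:explicit form of pi3_k} then drops out by substituting the basis \eqref{e:basis for o} into $\pi(X) = \ai \< \widehat\mu, X \> = \tfrac{\ai}2\trace{\widehat\mu\, X}$ and reading off the relevant entries of $\widehat\mu$ from the $k$-fold version of \eqref{e:quantized moment map3 2nd}; the only point to watch is the normal-ordering constant in $\pi(X_{i,j}^0)$, one $\delta_{i,j}$ being produced by each of the $k$ summands $z_{i,a}\pd_{z_{i,a}}$ and $w_{i,a}\pd_{w_{i,a}}$.

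For the $\Sp_k$-invariance I would repeat the computation in the proof of Theorem~\ref{t:pi is a hom o} with $J_1$ and $\Sp_1$ replaced throughout by $J_k$ and $\Sp_k$. Using the $k$-fold analogue of the expression \eqref{e:quantized moment map3 3rd}, with $\widehat v_+ = [z,w]$ and $\widehat{\bar v}_+ = [-2\pd_z,-2\pd_w]$ now matrices over $\PD(V^k)$, Lemma~\ref{l:adjoint of rho(G')} gives $\mAd_{\rho(g)^{-1}}\widehat v_+ = \widehat v_+ g^{-1}$ and $\mAd_{\rho(g)^{-1}}\widehat{\bar v}_+ = \widehat{\bar v}_+\,\tp g$ for $g \in \GL{2k}$. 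Substituting these into the four blocks of $\widehat\mu$, the inner factors collapse by the cancellations $g^{-1}g = 1_{2k}$ and $g^{-1}J_k\tp{g^{-1}} = J_k$, the latter being a consequence of the defining relation $\tp g J_k g = J_k$ of $\Sp_k$; hence $\mAd_{\rho(g)^{-1}}\widehat\mu = \widehat\mu$, so $\pi(X) \in \PD(V^k)^{\Sp_k}$ for every $X \in \g$.

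The one genuinely new ingredient, and the step I expect to demand the most care, is confirming that in the $k$-fold case the two presentations of $\widehat\mu$ --- the one built from $z, w, \pd_z, \pd_w$ and the one built from $\widehat v_+$ and $\widehat{\bar v}_+$ --- still coincide, that is, that the quaternionic identifications of Remarks~\ref{r:quaternionic vector space}, now relating $(\H^n)^k$, $\Mat{2n\times k}(\C)$ and $\Mat{n\times 2k}(\C)$, carry the symplectic form and the right $\Sp(k)$-action over faithfully. This, however, is only a routine extension of the $k=1$ verification already performed before the corollary.
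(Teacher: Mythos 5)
Your proposal is correct and follows exactly the route the paper takes: the paper's own proof of this corollary is the single sentence that it ``is essentially the same as that of Theorem~\ref{t:pi is a hom o},'' and you have simply filled in what ``essentially the same'' means — reuse the homomorphism argument of Theorem~\ref{t:pi is a hom sp}, read off the explicit formulae from the $k$-fold $\widehat\mu$, and redo the $\mAd_{\rho(g)^{-1}}$-computation of Theorem~\ref{t:pi is a hom o} with $J_1,\Sp_1$ replaced by $J_k,\Sp_k$, using $g^{-1}g=1$ and $g^{-1}J_k\,\tp{g^{-1}}=J_k$ (the latter from inverting $\tp g J_k g = J_k$ and $J_k^{-1}=-J_k$). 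One small but worthwhile observation: your careful bookkeeping of the normal-ordering constant in $\pi(X_{i,j}^0)$ yields a total contribution of $-k\,\delta_{i,j}$ outside the sum over $a$, which is the correct value; the displayed formula~\eqref{e:explicit form of pi3_k} as typeset, with ``$k\delta_{i,j}$'' placed inside the $\sum_{a=1}^k$, would literally give $-k^2\delta_{i,j}$ and should be read as you read it.
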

\begin{proof}
The proof is essentially the same as that of Theorem \ref{t:pi is a hom o}. 
\end{proof}
Similarly to the cases discussed above,
it is well known that the irreducible decomposition of $\PVk$ under the joint action of $(\mathfrak{o}_{2n},\Sp_k)$ is given by
\begin{equation}
\label{e:irred_decomp_weil_rep3}
 \PVk \simeq \sum_{{\sigma \in \widehat{\Sp}_k, \, L(\sigma) \ne \{ 0 \} }} L(\sigma) \otimes V_\sigma,
\end{equation}
where $V_\sigma$ is a representative of the class $\sigma \in \widehat{\Sp}_k$, 
the set of all equivalence classes of the finite-dimensional irreducible representation of $\Sp_{k}$,
and $L(\sigma):=\operatorname{Hom}_{\,\Sp_{k}}(V_\sigma, \PVk)$ which is an infinite-dimensional irreducible representation of $\mathfrak{o}_{2n}$.
It is also well known that the action $\pi$ restricted to $\mathfrak k$ lifts to the complexification $K_\C$ of the maximal compact subgroup $K$ of $G=\rmO^*(2n)$, 
which implies that $L(\sigma)$ is an irreducible $(\g,K_\C)$-module.

\section{Lagrangian subspace}

In this section, 
we take complex Lagrangian subspaces of $W_\C$ different from the ones considered in the previous sections in the cases where $G=\rmO^*(2n)$ and $\U(p,q)$,
and quantize the moment map to obtain finite-dimensional representations of $\o_{2n}$ and the oscillator representation of $\u(p,q)$.
Finally, we make an observation that the image of the Lagrangian subspace coincides with the associated variety of the corresponding irreducible $(\g,K_\C)$ (or $(\g,\tilde K_\C)$)-modules 
occurring in the irreducible decomposition of the space consisting of polynomial functions on the Lagrangian subspace under the joint action of $(\g,G')$.

\subsection{}
Let $G=\rmO^*(2n)$ and let $(W,\omega)$ be the symplectic $G$-vector space we discussed in \S \ref{s:O-Sp},
i.e., $W=(\C^{2n})_\R$ and $\omega$ is given by \eqref{e:symplectic form3}.
Let us now consider another complex Lagrangian subspace $V' \subset W_\C$ defined by
\begin{equation}
 V':=\left\< \tfrac12( e_1 - \ai I e_1), \dots, \tfrac12( e_{2n} - \ai I e_{2n}) \right\>_\C
\end{equation}
and the corresponding quantization
\begin{equation}
\label{e:canonical quantization3 complex finite-dim}
 \widehat{z}_i = z_i, \quad
 \widehat{\bar z}_i = -2  \epsilon_i {\pd_{z_i}}
\end{equation}
for $i=1,\dots,2n$ as in \S \ref{s:GL-GL}, which also satisfy \eqref{e:commutation relation3_k complex}.
Here $I$ denotes the complex structure on $W$ mentioned in \S \ref{s:O-Sp}.
Then the quantized moment map, which we denote by the same symbol $\widehat \mu$, is given by
\allowdisplaybreaks{
\begin{align}
 \widehat \mu 
    &= -\frac{\ai}2 \Biggl( \,
      \begin{bmatrix} \,\widehat z_1  \\ \vdots \\ \,\widehat z_{\bar n} \end{bmatrix}
          ( \, \widehat{\bar z}_1, \dots, \widehat{\bar z}_{\bar n} ) \, I_{n,n}
      - S I_{n,n} \begin{bmatrix} \,\widehat{\bar z}_1 \\ \vdots \\ \,\widehat{\bar z}_{\bar n} \end{bmatrix} 
          ( \, \widehat z_1, \dots, \widehat z_{\bar n} ) \, S
                 \,  \Biggr)
             \notag  \\
    &= -\ai \begin{bmatrix}
	     - z' \tp \pd_{z'} + \pd_{z''} \tp z''   &   -z' \tp \pd_{z''} + \pd_{z''} \tp z'  \\[3pt]
             - z'' \tp \pd_{z'} + \pd_{z'} \tp z''  &  -z'' \tp \pd_{z''} + \pd_{z'} \tp z'
	    \end{bmatrix}
        \label{e:quantized moment map3'}
\end{align}
}%
with $z'=\tp{(z_1,\dots,z_n)}, z''=\tp{(z_{\bar 1},\dots,z_{\bar n})}, \pd_{z'}= \tp{(\pd_{z_1}, \dots,\pd_{z_{n}})}$
and $\pd_{z''}=\tp{(\pd_{z_{\bar 1}},\dots,\pd_{z_{\bar n}})}$.
Therefore, 
in terms of the basis \eqref{e:basis for o} for $\g=\mathfrak{o}_{2n}$, $\pi(X):=\ai \<\widehat \mu, X\>$ is given by 
\begin{equation}
\label{e:explicit form of pi3_k prime}
  \pi(X) = \begin{cases}
   z_j \, \pd_{z_i} - z_{\ibar} \, \pd_{z_{\jbar}}          &\text{if} \quad X=X_{i,j}^0 ; \\
   z_{\jbar} \, \pd_{z_i} - z_{\ibar} \, \pd_{z_j}          &\text{if} \quad X=X_{i,j}^+ ; \\
   z_i \, \pd_{z_{\jbar}} - z_j \, \pd_{z_{\ibar}}            &\text{if} \quad X=X_{i,j}^-.  
           \end{cases}
\end{equation}
Since each $\pi(X)$ preserves the degree of a homogeneous polynomial $f \in \mathscr{P}(V')=\C[z_1,\dots,z_{2n}]$ for $X \in \g$,
any irreducible representation occurring in the irreducible decomposition of $\mathscr{P}(V')$ is finite-dimensional.

\subsection{}
On the contrary, we will apply the quantization procedure introduced in \S \ref{s:O-Sp} to the case discussed in \S \ref{s:GL-GL}.
Namely, let $G=\U(p,q)$ and let $(W,\omega)$ be the symplectic $G$-vector space,
i.e., $W=(\C^{p+q})_\R$ and $\omega$ is given by \eqref{e:symplectic form2}. 
Now we quantize the complex coordinate functions $z_j=x_j + \ai y_j$ and $\bar z_j=x_j - \ai y_j$ in the following way 
(cf. \eqref{e:canonical quantization3 complex}):
\begin{equation}
\begin{aligned}
\label{e:canonical quantization2 complex weil rep}
 \widehat z_i &= z_i,  \quad &  \widehat{\bar z}_i &= -2 \pd_{z_i},    \qquad    &  &(i=1,\dots,p); \\
 \widehat{\bar z}_{\jbar} &= \bar z_{\jbar}, \quad & \widehat z_{\jbar} &= -2 \pd_{\bar z_{\jbar}}, \qquad    &  &(j=1,\dots,q),
\end{aligned}
\end{equation}
which also satisfy \eqref{e:commutation relation2 complex}.
This quantization corresponds to taking a complex Lagrangian subspace $V \subset W_\C$ defined by
\begin{equation}
\label{e:Lagrangian2 weil rep}
 V=\left\< \tfrac12(e_i - \ai I e_i), \tfrac12(e_{\jbar} + \ai I e_{\jbar}); i=1,\dots,p,j=1,\dots,q \right\>_\C,
\end{equation}
where $I$ denotes the complex structure on $W$ mentioned in \S \ref{s:GL-GL}.
For simplicity, we will write $w_j := \bar z_{\jbar}$, $j=1,\dots,q$, as in the previous section,
and write an element of $V$ as $\left[\begin{smallmatrix} z \\[2pt] w \end{smallmatrix}\right]$
with $z \in \C^p$ and $w \in \C^q$.
Then the quantized moment map, which we denote by the same symbol $\widehat \mu$, is given by
\begin{align}
 \widehat \mu 
    &= -\frac{\ai}2 \begin{bmatrix} \,\widehat z_1 \\ \vdots \\ \,\widehat z_{n} \end{bmatrix}
                    \left( \, \widehat{\bar z}_1, \dots, \widehat{\bar z}_{n} \right) I_{p,q}
    = -\frac{\ai}2 \begin{bmatrix} z \\[3pt] -2 \pd_w \end{bmatrix}
          \left( -2 \tp\pd_z,  - \tp w \right)
         \notag \\
    &= \ai \begin{bmatrix} z \tp\pd_z & \frac12 z \tp w \\[3pt] -2 \pd_w \tp\pd_z & -\pd_w \tp w \end{bmatrix}
        \label{e:quantized moment map2'}
\end{align}
with $z=\tp{(z_1,\dots,z_p)}$, $\pd_z = \tp{(\pd_{z_1}, \dots,\pd_{z_p})}$, $w=\tp{(w_1, \dots, w_q)}$ and $\pd_w=\tp{(\pd_{w_1}, \dots, \pd_{w_q})}$.
In terms of the basis $\{ E_{i,j} \}$ for $\g=\gl_n$, $\pi(X):=\ai \<\widehat \mu, X\>$ is given by 
\begin{equation}
  \pi(X) = \begin{cases}
   -z_j \, \pd_{z_i}                  &\text{if} \quad X=E_{i,j} \quad (i,j=1,\dots,p); \\
   \hphantom{-}2 \pd_{z_i} \pd_{w_j}       &\text{if} \quad X=E_{i,\jbar} \quad (i=1,\dots,p;j=1,\dots,q) ; \\
   -\frac12 z_j w_i                   &\text{if} \quad X=E_{\ibar,j} \quad (i=1,\dots,q;j=1,\dots,p) ; \\
   \hphantom{-} \pd_{w_j} w_i            &\text{if} \quad X=E_{\ibar,\jbar} \quad (i,j=1,\dots,q).  
           \end{cases}
\end{equation}

Let us now consider the $k$ direct sum $W_\C^k$, and its subspace $V^k$ with $V$ given in \eqref{e:Lagrangian2 weil rep} 
which is identified with $\Mat{n \times k}(\C)$. 
Then $\GL{k}$ acts on $V^k$ on the right by
\begin{equation}
\label{e:holomorphic action of GL_k}
 \begin{bmatrix} z \\ w \end{bmatrix} \mapsto \begin{bmatrix} z \, g \\[2pt] w \tp{g}^{-1} \end{bmatrix}
\end{equation}
for $g \in \GL{k}$, with $z = (z_{i,a}) \in \Mat{p \times k}(\C)$ and $w = (w_{j,a}) \in \Mat{q \times k}(\C)$,
and hence on $\mathscr{P}(V^k)$ by right regular representation, which we denote by $\rho$, as usual.
Note that \eqref{e:holomorphic action of GL_k} is the holomorphic extension of the standard right-action of $\U(k)$ 
on $\Mat{n \times k}(\C)$ given by $Z \mapsto Z \,g$ for $Z \in \Mat{n \times k}(\C)$ and $g \in \U(k)$.
Then, understanding that $z$ and $\pd_z$ (resp. $w$ and $\pd_w$) in \eqref{e:quantized moment map2'} stand for 
$p \times k$-matrices $(z_{i,a})$ and $(\pd_{z_{i,a}})$ (resp. $q \times k$-matrices $(w_{j,a})$ and $(\pd_{w_{j,a}})$) as in the previous sections, 
one obtains the following.
%
%
\begin{theorem}
For $X \in \g=\gl_n$, set $\pi(X):=\ai \<\widehat \mu, X\>$. 
Then the map
\[
 \pi: \g \to \PD(V^k)
\]
is a Lie algebra homomorphism.
Moreover, $\pi(X)$ commutes with the action of $\GL{k}$ on $V^k$, 
i.e., $\pi(X) \in \PD(V^k)^{\GL{k}}$ for all $X \in \g$.
\end{theorem}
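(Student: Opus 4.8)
The plan is to follow the two-step template already used for Corollaries~\ref{t:pi is a hom (sp,O)}, \ref{t:pi is a hom (gl,GL)} and~\ref{t:pi is a hom (o,Sp)}; the only genuinely new ingredient is the twisted right $\GL{k}$-action~\eqref{e:holomorphic action of GL_k}. First, for the homomorphism property, note that the moment map $\mu\colon W^k\to\g_0^*$ is \emph{unchanged} from \S\ref{s:GL-GL}, namely $\mu=-\tfrac{\ai}2\,z z^* I_{p,q}$ with $z\in\Mat{n \times k}(\C)$, because $\mu$ does not depend on the choice of Lagrangian subspace; only the quantization prescription~\eqref{e:canonical quantization2 complex weil rep} is new. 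As in Theorem~\ref{t:pi is a hom sp}, the map $X\mapsto H_X:=\<\mu,X\>$ is a Lie algebra homomorphism from $\g_0$ into $C^\infty(W^k)$ with the Poisson bracket, each $H_X$ is quadratic in the coordinate functions $z_{j,a},\bar z_{j,a}$, and the operators $\widehat z_{j,a},\widehat{\bar z}_{j,a}$ assigned by~\eqref{e:canonical quantization2 complex weil rep} have commutators that are scalars, hence central in $\PD(V^k)$ (they satisfy~\eqref{e:commutation relation2_k complex}). Since the Poisson bracket and the commutator are both biderivations, this forces $[\widehat H_X,\widehat H_Y]=-\ai\widehat H_{[X,Y]}$, so $\pi(X)=\ai\widehat H_X$ satisfies $[\pi(X),\pi(Y)]=\pi([X,Y])$ for $X,Y\in\g_0$, and one extends by linearity to $\g=\gl_n$. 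The explicit block form of $\pi(X)$ is then read off from~\eqref{e:quantized moment map2'} by letting $z,\pd_z$ (resp.\ $w,\pd_w$) stand for the $p\times k$ (resp.\ $q\times k$) matrices of multiplication and differentiation operators.

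For the $\GL{k}$-equivariance I would use the explicit form~\eqref{e:quantized moment map2'} of $\widehat\mu$ together with~\eqref{e:holomorphic action of GL_k}. Because the latter makes $z$ transform by $g$ and $w$ transform by $\tp g^{-1}$, applying Lemma~\ref{l:adjoint of rho(G')} to the $z$-block directly, and to the $w$-block with $g$ replaced by $\tp g^{-1}$, yields
\[
 \mAd_{\rho(g)^{-1}} z = z\, g^{-1},\qquad \mAd_{\rho(g)^{-1}}\pd_z = \pd_z\,\tp g,\qquad \mAd_{\rho(g)^{-1}} w = w\,\tp g,\qquad \mAd_{\rho(g)^{-1}}\pd_w = \pd_w\, g^{-1}
\]
for $g\in\GL{k}$. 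Substituting these into~\eqref{e:quantized moment map2'} block by block, each stray $g^{-1}$ is annihilated by an adjacent $g$: for instance $\mAd_{\rho(g)^{-1}}(z\,\tp\pd_z)=z\, g^{-1} g\,\tp\pd_z=z\,\tp\pd_z$ and $\mAd_{\rho(g)^{-1}}(z\,\tp w)=z\, g^{-1} g\,\tp w=z\,\tp w$, and likewise for the $\pd_w\,\tp\pd_z$ and $\pd_w\,\tp w$ blocks. Hence $\mAd_{\rho(g)^{-1}}\widehat\mu=\widehat\mu$, and therefore $\pi(X)\in\PD(V^k)^{\GL{k}}$ for every $X\in\g$.

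The homomorphism half is a verbatim repetition of earlier reasoning and presents no difficulty. The only point requiring care — and thus the main (minor) obstacle — is bookkeeping in the equivariance computation: one must keep track of the fact that the right $\GL{k}$-action~\eqref{e:holomorphic action of GL_k} acts on the $w$-variables through $\tp g^{-1}$ rather than through $g$, which is what makes the four $\mAd$ relations above pair up the way they do. Once this is pinned down, the cancellation in all four blocks of~\eqref{e:quantized moment map2'} is forced by the shape of the matrix, and no further identity or estimate is needed.
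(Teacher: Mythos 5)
Your proposal is correct and follows essentially the same route as the paper: the homomorphism half is handled by the same Poisson-bracket / central-commutator argument reused from Theorem~\ref{t:pi is a hom sp}, and for the $\GL{k}$-invariance you derive the same four relations $\mAd_{\rho(g)^{-1}}z = zg^{-1}$, $\mAd_{\rho(g)^{-1}}\pd_z = \pd_z\tp g$, $\mAd_{\rho(g)^{-1}}w = w\tp g$, $\mAd_{\rho(g)^{-1}}\pd_w = \pd_w g^{-1}$ from Lemma~\ref{l:adjoint of rho(G')}, correctly accounting for the twisted action~\eqref{e:holomorphic action of GL_k} on the $w$-variables. The only (cosmetic) difference is that you check $\mAd_{\rho(g)^{-1}}\widehat\mu=\widehat\mu$ block by block, whereas the paper substitutes into the factored column-times-row expression for $\widehat\mu$ and cancels the $g^{-1}g$ in one step.
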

\begin{proof}
We only show that $\pi(X) \in \PD(V^k)^{\GL{k}}$ for $X \in \g$.
It follows from Lemma \ref{l:adjoint of rho(G')} that
\begin{align*}
 \mAd_{\rho(g)^{-1}} z &= z \, g^{-1},  &  \mAd_{\rho(g)^{-1}} w &= w \tp g, \\
 \mAd_{\rho(g)^{-1}} \pd_z &= \pd_z \tp g,  &  \mAd_{\rho(g)^{-1}} \pd_w &= \pd_w \, g^{-1}
\end{align*}
for $g \in \GL{k}$.
Hence one obtains that
\begin{align*}
\mAd_{\rho(g)^{-1}} \widehat \mu 
  &= -\frac{\ai}2
     \begin{bmatrix} \mAd_{\rho(g)^{-1}} z \\[3pt] -2 \mAd_{\rho(g)^{-1}} \pd_w \end{bmatrix}
      \left[ -2 \tp{(\mAd_{\rho(g)^{-1}} \pd_z)}, \;  \tp{(\mAd_{\rho(g)^{-1}} w)} \right]
           \\
  &= -\frac{\ai}2
     \begin{bmatrix} z \, g^{-1} \\[3pt] -2 \, \pd_w \, g^{-1} \end{bmatrix}
      \left[ -2 g \tp{\pd_z}, \;  g \tp{w} \right]
           \\
  &= -\frac{\ai}2
     \begin{bmatrix} z \\[3pt] -2 \pd_w \end{bmatrix} g^{-1}
      g \left[ -2 \tp{\pd_z}, \;  \tp{w} \right]
   = \widehat \mu.
\end{align*}
This completes the proof.  
\end{proof}
Therefore, the irreducible decomposition of $\PVk$ is given by
\begin{equation}
\label{e:irred_decomp_weil_rep2}
 \PVk \simeq \sum_{{\sigma \in \widehat{\mathrm{GL}}_k, \, L(\sigma) \ne \{ 0 \} } } L(\sigma) \otimes V_\sigma,
\end{equation}
where $V_\sigma$ is a representative of the class $\sigma \in \widehat{\mathrm{GL}}_k$, 
the set of all equivalence classes of the finite-dimensional irreducible representation of $\GL{k}$,
and $L(\sigma):=\operatorname{Hom}_{\,\GL{k}}(V_\sigma, \PVk)$. 
It is well known that $L(\sigma)$ is an irreducible $(\g,K_\C)$-module of infinite dimension for any $\sigma \in \widehat{\GL{}}_k$ such that $L(\sigma) \ne \{ 0 \}$,
where $\g=\gl_{p+q}$ and $K_\C$ is the complexification of the maximal compact subgroup $K$ of $G=\U(p,q)$.

%
%
\subsection{}
One can uniquely extend the moment map $\mu: W \to \g_0$ to the map from $W_\C$ into $\g$, which we denote by $\mu_\C$.
Then the images by $\mu_\C$ of the complex Lagrangian subspaces 
that have been considered in this and previous sections are all equal to the associated varieties of the corresponding representations,
which we will see below case by case.


\subsubsection{}
First we consider the cases where $G=\U(p,q)$ and $\rmO^*(2n)$.
Let $K_\C$ be the complexification of the maximal compact group $K$ of $G$.
Then it is well known that $K_\C$ acts on $\p$ with the irreducible decomposition $\p=\p^+ \oplus \p^-$
and that the orbit space decomposition of $\p^+$ under $K_\C$ is given by
\begin{equation}
\label{e:K_C-orbit_decomp 2}
 \p^+ = \bigsqcup_{j=0}^{r} \mathscr{O}^{K_\C}_j 
        \qquad (r:=\Rrank G),
\end{equation}
where
\begin{align}
 \mathscr{O}^{K_\C}_j 
    & = \left\{ \begin{bmatrix} O & C \\ O & O \end{bmatrix} ;%
                 \begin{array}[c]{l} C \in \Mat{p \times q}(\C), \rank{C} = j  \end{array} 
        \right\}%
               &\text{for} \quad G=\U(p,q), 
          \\
 \mathscr{O}^{K_\C}_j 
    & = \left\{ \begin{bmatrix} O & C \\ O & O \end{bmatrix} ;%
                 \begin{array}[c]{l}  C \in \Mat{n \times n}(\C), \tp{C}+C=O \\ \rank{C} = 2j \end{array} 
        \right\}%
               &\text{for} \quad G=\rmO^*(2n).
\end{align} 
Moreover, if we denote the closure of a orbit $\mathscr O$ by $\overline{\mathscr O}$, then
\begin{equation}
\label{e:K-orbit closure}
 \overline{\mathscr{O}^{K_\C}_j} = \bigsqcup_{i \leqsl j} {\mathscr{O}^{K_\C}_i}
\end{equation}
(see \cite{KostantRallis71}).
Therefore, in view of the explicit formulae \eqref{e:moment map2 closed} and \eqref{e:moment map3 closed},
one finds that 
\begin{equation}
\label{e:associated_var 2 and 3}
 \mu_\C (V'^k)=\overline{\mathscr{O}^{K_\C}_0} = \{ 0 \}    
     \quad \text{and} \quad
 \mu_\C (V^k)= \overline{\mathscr{O}^{K_\C}_m}
\end{equation}
with $m=\min(k,r)$.
Since the associated varieties of the finite-dimensional representations 
and those of the irreducible representations $L(\sigma)$ occurring in \eqref{e:irred_decomp_weil_rep2} and \eqref{e:irred_decomp_weil_rep3}
are equal to $\{ 0 \}$ and $\overline{\mathscr{O}^{K_\C}_m}$ respectively (cf.~\cite{DES91}),
one concludes that the image of the complex Lagrangian subspace $V^k$ or $V'^k$ by $\mu_\C$
coincides with the associated variety corresponding to the irreducible representations occurring in $\PVk$ or in $\PVprimek$.

\subsubsection{}
In order to see this is the case for $G=\Sp(n,\R)$, 
we realize the symplectic group over $\R$ as its Cayley transform:
\begin{align*}
 G^\gamma :\hspace{-3pt}&=\{ \gamma g \gamma^{-1}; g \in G\} 
           = \Sp_n \cap \U(n,n)  
\end{align*}
with $\gamma = \frac12 \left[ \begin{smallmatrix} 1 & 1 \\[2pt] -\ai & \ai \end{smallmatrix} \right]$.
In the rest of this subsection, however,
let us denote $G^\gamma$ just by $G$, and use the same symbols to denote the Cayley transforms of subgroups, Lie algebras etc.~ as those of the corresponding objects 
by abuse of notation if there is no risk of confusion.

One can also obtain the so-called Fock model of the oscillator representation by the canonical quantization of the moment map in the following way:
let us denote by $I$ the complex structure on $W=\R^{2n}$ defined by $e_i \mapsto e_{\ibar}$ and $e_{\ibar} \mapsto -e_i$ for $i=1,\dots,n$,
and introduce complex coordinates $z_i:=x_i + \ai y_i$ and their conjugates $\bar z_i:=x_i - \ai y_i$, $i=1,\dots,n$.
Namely, we regard $W=\R^{2n}$ as $(\C^n)_\R$; more precisely, let us define an $\R$-vector space $W_a$ by
\(
 W_a = \left\{ \left[\begin{smallmatrix} z \\[2pt] \bar z \end{smallmatrix}\right]; z \in \C^n \right\}
\)
and an $\R$-isomorphism from $W_a$ onto $W$ by 
\[
  \varphi_\gamma: W_a \to W, \qquad 
     \left[ \, \begin{matrix} z \\ \bar z \end{matrix} \, \right]
        \mapsto
     \gamma \left[ \, \begin{matrix} z \\ \bar z \end{matrix} \, \right]
    = \frac12 \begin{bmatrix} z + \bar z \\ -\ai (z - \bar z) \end{bmatrix}, 
\]
for $z=\tp{(z_1,\dots,z_n)}$ and $\bar z=\tp{(\bar z_1,\dots,\bar z_n)} \in \C^n$.
The Cayley transform $G$ acts on $W_a$ by $v \mapsto g v$ (matrix multiplication) for $g \in G$ and $v \in W_a$,
with respect to which $\varphi_\gamma$ is equivariant.
Moreover, one sees that $\varphi_\gamma^* \omega = \frac{\ai}2 \sum_{i=1}^n \dd z_i \wedge \dd \bar z_i$
and that the moment map $\mu:W_a \to \g_0$ is given by
\begin{equation}
\label{e:moment map1 complex}
  \mu(v) = \frac{\ai}2 v \tp{v} J_n
         = \frac{\ai}2 \begin{bmatrix} -z \tp{\bar z} & z \tp{z} \\ -\bar z \tp{\bar z} & \bar z \tp{z} \end{bmatrix}
\end{equation}
for $v = \tp{(z_1,\dots,z_n,\bar z_1,\dots, \bar z_n)} \in W_a$.

%
%
\begin{remark}
If we temporarily distinguish the Cayley transform $\g_0^\gamma$ from $\g_0$ only in this remark,
it is easily verified that the following diagram is commutative:
\begin{equation}
\xymatrix{
   W_a \ar[r]^{\mu} \ar[d]_{\varphi_\gamma} & \g_0^\gamma  \\
   W \ar[r]_{\mu} & \g_0, \ar[u]_{\Ad(\gamma)}
}
\end{equation}
where the upper horizontal map denotes the moment map given by \eqref{e:moment map1 complex},
while the lower horizontal one by \eqref{e:moment map1 closed}.
\end{remark}

The Poisson brackets among $z_i$ and $\bar z_i$ are given by \eqref{e:Poisson bracket2 complex} with all $\epsilon_i=1$.
Therefore we quantize $z_i$ and $\bar z_i$ by assigning
\begin{equation}
\label{e:canonical quantization1 complex}
 \widehat{z}_i = z_i, \quad
 \widehat{\bar z}_i = -2 {\pd_{z_i}},
\end{equation}
so that they satisfy \eqref{e:commutation relation2 complex} with all $\epsilon_i=1$.
This quantization corresponds to the choice of the complex Lagrangian subspace $V$ of $W_\C=\C^{2n}$ given by
\begin{equation}
\label{e:lagrangian1 fock}
  V=\left\< \tfrac12(e_1 - \ai e_{\bar 1}),\dots,\tfrac12(e_n - \ai e_{\bar n}) \right\>_\C.
\end{equation}
Then the quantized moment map, which we denote by $\widehat \mu$ as always, is given by
\allowdisplaybreaks{
\begin{align}
 \widehat \mu 
    &= \frac{\ai}2 \begin{bmatrix} \,\widehat{z}_1 \\ \vdots \\ \,\widehat{\bar z}_{n} \end{bmatrix}
                    \left( \, \widehat{z}_1, \dots, \widehat{\bar z}_{n} \right) J_n
    = \frac{\ai}2 \begin{bmatrix} z \\[3pt] -2 \pd_z \end{bmatrix}
          \left( \tp{z},  - 2\tp{\pd_z} \right) J_n
         \notag \\
    &= \ai \begin{bmatrix} z \tp\pd_z & \frac12 z \tp z \\[3pt] -2 \pd_z \tp\pd_z & -\pd_z \tp z \end{bmatrix}
        \label{e:quantized moment map1 fock}
\end{align}
}%
with $z=\tp{(z_1,\dots,z_n)}$, $\pd_z = \tp{(\pd_{z_1}, \dots,\pd_{z_n})}$.
In terms of the basis $\{ X^{\star}_{i,j} \}$ for $\g=\sp_n$, $\pi(X):=\ai \<\widehat \mu, X\>$ is given by 
\begin{equation}
  \pi(X) = \begin{cases}
   -\frac12 (z_j  \pd_{z_i} + \pd_{z_i} z_j)  &\text{if} \quad X=X_{i,j}^0; \\
   \hphantom{-} 2\pd_{z_i} \pd_{z_j}          &\text{if} \quad X=X_{i,j}^+ ; \\
   - \frac12 z_j z_i                          &\text{if} \quad X=X_{i,j}^-.
           \end{cases}
\end{equation}

Now let us take the $k$ direct sum $W_\C^k$ and its subspace $V^k$, with $V$ as in \eqref{e:lagrangian1 fock}.
When $V^k$ is identified with $\Mat{n \times k}(\C)$, $\GL{k}$ acts on $V^k$ on the right and hence on $\PVk$ by right regular representation.
Then, if one understands that $z$ and $\pd_z$ in \eqref{e:quantized moment map1 fock} stand for $n \times k$-matrices $(z_{i,a})$ and $(\pd_{z_{i,a}})$ respectively
and sets $\pi(X)=\ai \<\widehat \mu, X\>$ for $X \in \g=\sp_n$, one can show that the map
\( \pi: \g \to \PD(V^k) \)
is a Lie algebra homomorphism and that $\pi(X) \in \PD(V^k)^{\rmO_{k}}$ for all $X \in \g$.
The irreducible decomposition of $\PVk$ under the joint action of $(\sp_n,\rmO_k)$ is of course the same as \eqref{e:Howe duality1}.

It is known that the orbit space decomposition under $K_\C$ of $\p^+$ is given by the same formula as \eqref{e:K_C-orbit_decomp 2} with 
\[
 \mathscr{O}^{K_\C}_j 
     = \left\{ \begin{bmatrix} O & C \\ O & O \end{bmatrix} ;%
                 \begin{array}[c]{l}  C \in \Mat{n \times n}(\C), \tp{C}=C \\ \rank{C} = j \end{array} 
        \right\},
\]
and its closure $\overline{\mathscr{O}^{K_\C}_j}$ is given by the same formula as \eqref{e:K-orbit closure} (see \cite{KostantRallis71}).
Therefore, in view of \eqref{e:moment map1 complex}, one finds that
\begin{equation}
\label{e:associated_var 1}
 \mu_\C (V^k)= \overline{\mathscr{O}^{K_\C}_m}
\end{equation}
with $m=\min(k,r)$.
Hence the image of the complex Lagrangian subspace $V^k$ by $\mu_\C$
coincides with the associated variety corresponding to the irreducible representations occurring in $\PVk$, as in the previous cases.


\bibliographystyle{amsalpha}
\bibliography{rep,geom}

\nocite{AY09}
\nocite{HKM14}

\end{document}